\newcommand{\tr}{\mathrm{tr}}        %trace
\newcommand{\diam}{\mathrm{diam}}       %set diameter
\newcommand{\dd}{:} % matrix dot product
\newcommand{\vnu}{\bm{\nu}}
\newcommand{\vS}{\mathbf{S}}
\newcommand{\vV}{\mathbf{V}}
\newcommand{\vH}{\mathbf{H}}
\newcommand{\Om}{\Omega}
\newcommand{\Gm}{\Gamma}
\newcommand{\iO}{\int_{\Omega}}
\newcommand{\iG}{\int_{\Gamma}}
\newcommand{\M}{\mathbb{M}}   % another space
\newcommand{\R}{\mathbb{R}}   % real numbers
\newcommand{\bigW}{\mathcal{W}} % the big W space
\newcommand{\Vs}{\vV}
\newcommand{\Hs}{\vH}
\newcommand{\HGs}{\vH_{\Gamma}}
\newcommand{\HGsh}{\vH_{\Gamma,h}}
\newcommand{\dt}{\delta t}
\newcommand{\inner}[3]{\left( #1, #2 \right)_{#3}}
\newcommand{\Tk}{\mathcal{T}}
\newcommand{\Fk}{\mathcal{F}}
\newcommand{\Pk}{\mathcal{P}}
\newcommand{\interp}{\mathcal{I}_{h}} % interpolation
\newcommand{\symmten}{\vS} % symmetric tensors
\newcommand{\symmtraceless}{\vS_{0}} % symmetric and traceless
\newcommand{\tracelessop}[1]{\widehat{#1}} % symmetric and traceless
\newcommand{\tA}{A}
\newcommand{\tQ}{Q}
\newcommand{\tP}{P}
\newcommand{\tH}{H}
\newcommand{\tR}{R}
\newcommand{\tX}{X}
\newcommand{\tY}{Y}
\newcommand{\tXi}{\Xi}
\newcommand{\tzero}{0}
\newcommand{\basis}{E}
\newcommand{\ELdG}{\mathcal{E}}
\newcommand{\ELdGfunc}{W}
\newcommand{\LdGsurf}{f_{\Gm}}
\newcommand{\anchcoef}{\eta_{\Gm}}
\newcommand{\LdGrhs}{\chi}
\newcommand{\Li}{\ell}
\newcommand{\bulkfunc}{\psi}
\newcommand{\bulkimp}{\psi_{c}}
\newcommand{\bulkexp}{\psi_{e}}
\newcommand{\bulketa}{\eta_{\mathrm{dw}}}
\newcommand{\bulkK}{a_0}
\newcommand{\bulkA}{a_2}
\newcommand{\bulkB}{a_3}
\newcommand{\bulkC}{a_4}
\newcommand{\bulkstab}{d_2}
\newcommand{\bulkbnd}{\tilde{d}_2}
\newcommand{\domcyl}{\mathcal{C}} % domain cylinder
\newcommand{\bdycyl}{\mathcal{G}} % bdy cylinder
\newcommand{\tfinal}{t_{\mathrm{f}}}
\newcommand{\domtarget}{Z_{\domcyl}} % target domain function
\newcommand{\bdytarget}{Z_{\bdycyl}} % target bdy function
\newcommand{\fintarget}{Z_{\tfinal}} % target final time function
\newcommand{\domcon}{U_{\Om}} % domain control
\newcommand{\domconi}[1]{U_{\Om,#1}} % domain control
\newcommand{\domconh}{U_{\Om,h}} % domain control (discrete)
\newcommand{\domcoef}{\eta_{\Om}} % coefficent for domain control
\newcommand{\bdycon}{U_{\Gm}} % boundary control
\newcommand{\bdyconi}[1]{U_{\Gm,#1}} % boundary control
\newcommand{\bdyconh}{U_{\Gm,h}} % boundary control
\newcommand{\bardomcon}{\bar{U}_{\Om}} % optimal boundary control
\newcommand{\barbdycon}{\bar{U}_{\Gm}} % optimal boundary control
\newcommand{\dombnd}{{u}_{\Om}}
\newcommand{\conadmis}{\mathcal{U}_{\rm{ad}}}
\newcommand{\conreg}{\mathcal{U}_{\rm{reg}}}
\newcommand{\tilconadmis}{\widetilde{\mathcal{U}}_{\rm{ad}}}
\newcommand{\costobj}{J}
\newcommand{\reducedobj}{\mathcal{J}}
\crefname{hypothesis}{Hypothesis}{Hypotheses}
\title{Optimal Control of the Landau-de Gennes Model \\ of Nematic Liquid Crystals\thanks{Submitted to the editors DATE.
}}
\author{Thomas M. Surowiec\thanks{
Simula Research Laboratory, 
Department of Numerical Analysis and Scientific Computing,
Kristian Augusts gate 23,
0164 Oslo, Norway
  (\email{thomasms@simula.no}).}
\and Shawn W. Walker\thanks{
Department of Mathematics, 
Center for Computation and Technology (CCT),
Louisiana State University,
Baton Rouge, LA 70803,
Tel.: +1-225-578-1603,
Fax: +1-225-578-4276,
S. W. Walker acknowledges financial support by the NSF DMS-2111474.
  (\email{walker@math.lsu.edu}).}
}
\begin{document}
\nolinenumbers
\maketitle

% REQUIRED
\begin{abstract}
We present an analysis and numerical study of an optimal control problem for the Landau-de Gennes (LdG) model of nematic liquid crystals (LCs), which is a crucial component in modern technology. They exhibit long range orientational order in their nematic phase, which is represented by a tensor-valued (spatial) order parameter $Q = Q(x)$.  Equilibrium LC states correspond to $Q$ functions that (locally) minimize an LdG energy functional.  Thus, we consider an $L^2$-gradient flow of the LdG energy that allows for finding local minimizers and leads to a semi-linear parabolic PDE, for which we develop an optimal control framework.  We then derive several a priori estimates for the forward problem, including continuity in space-time, that allow us to prove existence of optimal boundary and external ``force'' controls and to derive optimality conditions through the use of an adjoint equation. Next, we present a simple finite element scheme for the LdG model and a straightforward optimization algorithm.  We illustrate optimization of LC states through numerical experiments in two and three dimensions that seek to place LC defects (where $Q(x) = 0$) in desired locations, which is desirable in applications.
\end{abstract}

% REQUIRED
\begin{keywords}
nematic liquid crystals, defects, finite element method, adjoint equation
\end{keywords}

% REQUIRED
\begin{MSCcodes}
49M25, 35K91, 65N30
\end{MSCcodes}

%%%%%%%%%%%%%%%%%%%%%%%%%%%%%%%%%%%%%%%%%%%%%
\section{Introduction}\label{sec:intro}
%%%%%%%%%%%%%%%%%%%%%%%%%%%%%%%%%%%%%%%%%%%%%
Liquid crystals (LCs) are a critical material for emerging technologies \cite{deGennes_book1995,Lagerwall_CAP2012}.  Their response to optical \cite{Blinov_book1983,Goodby_inbook2012,Sun_SMS2014,Hoogboom_RSA2007,Dasgupta_MRE2015}, electric/magnetic \cite{Brochard_JPhysC1975,Buka_book2012,Shah_Small2012}, and mechanical actuation \cite{Zhu_PRE2011,LC_Elastomers_book2012,Biggins_JMPS2012,Resetic_NC2016} has already yielded various devices, e.g. electronic shutters \cite{Heo_AA2015}, novel types of lasers \cite{Humar_OE2010,Coles_NP2010}, dynamic shape control of elastic bodies \cite{Camacho-Lopez_NM2004,Ware_Science2015}, and others \cite{Musevic2011,Lopez-Leon_CPS2011,Copar_PNAS2015,Whitmer_PRL2013,Wang_NL2014}.  In fact, \cite{dePablo_SA2022} demonstrate that LCs can enable logic operations within soft matter, which can lead to creating autonomous active materials with the capability to make decisions.  Thus, optimization of LC devices in these applications is of obvious interest.

LCs are considered a \emph{meso-phase} of matter in which its ordered macroscopic state is \emph{between} a spatially disordered liquid, and a fully crystalline solid \cite{Virga_book1994}.  In their nematic phase, in which long ranged orientational order exists, the Landau-de Gennes (LdG) theory introduces a \emph{tensor-valued} function $\tQ$ to describe local order in the LC material.  In particular, the eigenframe of $\tQ$ yields information about the statistics of the distribution of LC molecule orientations; {see \cite[Sec. 1.3]{Virga_book1994} for an excellent derivation}.  The energy functional for $\tQ$, which is minimized at equilibrium, involves both a bulk potential, of ``double-well'' type, and an elastic contribution involving the derivatives of $\tQ$.  Often, an $L^2$-type gradient flow is used to compute (local) minimizers of the LdG energy functional.

The goals of this paper are to formulate an optimal control problem for the $L^2$-gradient flow of the LdG energy, derive several analytical results, and demonstrate the ability to optimize LC behavior with numerical simulations. To the best of our knowledge, a fully fledged, PDE-based, optimal control formulation of the LdG model of LCs has not been done before.  Utilizing both boundary controls and external ``force'' controls, we prove existence of optimal controls for the LdG model.  In addition, we show several numerical experiments, of tracking control type, that seek to place LC \emph{defects} in desired locations.  {Defects correspond to sudden spatial changes in $\tQ$ and are discussed more thoroughly in \cref{sec:results}; also see \cite{LinLiu_JPDE2001} for an introduction to defects in mathematical models of liquid crystals.}  Our method should be useful for optimizing LC devices in a variety of applications.

The paper is organized as follows.  \Cref{sec:lct} explains the LdG model and the associated optimal control problem, as well as discuss related work on the Allen-Cahn equation.  The well-posedness of the parabolic PDE coming from the $L^2$-gradient flow of the LdG energy is established in \cref{sec:well_psd} along with several analytical results.  Existence of optimal controls is shown in \cref{sec:exist_ctrl} and first order optimality conditions are established in \cref{sec:first_order}.  \Cref{sec:FEM_approx} describes our finite element method for approximating the forward and adjoint problems; see \cite{Bajc_JCP2016,Davis_SJNA1998,Ravnik_LC2009,Bartels_bookch2014,BorthNochettoWalker_NM2020,Lee_APL2002,Zhao_JSC2016} for other numerical methods for models related to LdG.  We illustrate our method with numerical experiments in \cref{sec:results} and close with some remarks in \cref{sec:conclusion}. 

%%%%%%%%%%%%%%%%%%%%%%%%%%%%%%%%%%%%%%%%%%%%%
\section{Liquid Crystal Theory}\label{sec:lct}
%%%%%%%%%%%%%%%%%%%%%%%%%%%%%%%%%%%%%%%%%%%%%

This section reviews the Landau-de Gennes (LdG) theory for a nematic phase.  We start with the following clarifications.

{First, we note that the minimization of the standard free energy of the LdG model gives rise to a semilinear elliptic partial differential equation (PDE), which admits multiple solutions.  Theoretically, this semilinear equation could be included as a constraint in an optimization problem. However, there is no guarantee that the second derivative of the LdG free energy functional is surjective. This would significantly complicate the derivation of optimality conditions and severely limit the convergence theory of numerical optimization algorithms.}  
%Afterwards, we define an optimal control problem, for which the LdG model, a semilinear elliptic partial differential equation (PDE), arises as a constraint.  
%However, the stationary LdG model does not admit unique solutions. Although full space approaches for PDE-constrained optimization exist in which the LdG model would be left as an equality constraint,
%would not be implicitly treated with a well-defined control-to-state mapping, exist, 
%there is no guarantee that the Hessian of the LdG free energy functional is surjective. 
To remedy these issues, we will consider an evolution equation, which amounts to an $L^2$-gradient flow of the LdG free energy.  This time-dependent control strategy is analyzed in subsequent sections.

{The second point of clarification involves the bulk potential used to model the nematic-to-isotropic phase transition $\widetilde{\bulkfunc}$. In the discussion below, we will first introduce a traditional double well function and derive an associated evolution equation. For mathematical reasons, we then modify this term \textit{beyond physically meaningful values} of $|Q|$.}

\paragraph{Notation} We typically denote scalars and vectors with lowercase letters, while tensors are denoted with uppercase letters.  Boldface capital letters typically denote vector spaces or function spaces.  Standard notation is used for Sobolev spaces and inner products.

%%%%%%%%%%%%%%%%%%%%%%%%%%%%%%%%%%%%%%%%%%%%%
\subsection{Landau-de Gennes Model}\label{ssec:ldg}
%%%%%%%%%%%%%%%%%%%%%%%%%%%%%%%%%%%%%%%%%%%%%
Let $\symmten$ be the space of symmetric, $d \times d$ tensors, and $\symmtraceless$ the set of symmetric, traceless $d \times d$ tensors, where $d = 2$ or $3$.  The order parameter of the LdG theory is given by $\tQ \in \symmtraceless$, which represents the statistical distribution (i.e.\ a covariance matrix) of LC molecules at a given point in space \cite{Virga_book1994}.  This means that the eigenvalues $\lambda_{i} (\tQ)$ should satisfy the following bound: $-1/d \leq \lambda_{i} (\tQ) \leq (d-1)/d$ for $i=1,...,d$.  In the standard model discussed below, the eigenvalue bounds are not explicitly enforced, though they are usually satisfied through the effect of the double-well (c.f. \cite{Majumdar_EJAM2010}).

We mainly focus on the $d=3$ case, and represent the state of the LC material by a tensor-valued function $\tQ : \Om \to \symmtraceless$, where $\Om \subset \R^3$ is the physical domain of interest.  Moreover, we take $\Om$ to be an open, bounded, Lipschitz domain with boundary $\Gm$, and outward unit normal $\vnu$; the normal derivative is denoted $\partial_{\vnu}$.  The standard free energy of the LdG model is defined as \cite{Mori_JJAP1999,Mottram_arXiv2014}:
\vspace{-0.4cm}
\begin{equation}\label{eqn:Landau-deGennes_general}
\begin{split}
  \ELdG [\tQ] &:= \iO \ELdGfunc (\tQ,\nabla \tQ) \, dx +
  \frac{1}{\bulketa^2}
  \iO \widetilde{\bulkfunc} (\tQ) \, dx + \anchcoef \iG \LdGsurf(\tQ) \, dS(x) - \iO \LdGrhs(\tQ) \, dx,
\end{split}
\end{equation}
where $\ELdGfunc (\tQ,\nabla \tQ)$ is the elastic energy density \cite{Mori_JJAP1999,Mottram_arXiv2014}.  Since optimal control of the Landau-de Gennes model has yet to be developed, we simplify $\ELdGfunc (\tQ,\nabla \tQ)$ to the one-constant model, i.e. $\ELdGfunc (\tQ,\nabla \tQ) = \frac{1}{2} |\nabla \tQ|^2$.  Future extensions of this work will consider more general elastic energies.

%\begin{equation}\label{eqn:Landau-deGennes_energy_density}
%\begin{split}
%\ELdGfunc(\tQ,\nabla \tQ) &:= \frac{1}{2} \Big{(} \Li_{1} |\nabla \tQ|^2 + \Li_{2} |\nabla \cdot \tQ|^2 + \Li_{3} (\nabla \tQ)\tp \trid \nabla \tQ , \\
%&\qquad\qquad + \Li_{4} \nabla \tQ \trid (\levi \cdot \tQ) + \Li_{*} \nabla \tQ \trid [(\tQ \cdot \nabla) \tQ] \Big{)},
%\end{split}
%\end{equation}
%where $\{ \Li_{i} \}_{i=1}^{4}$, $\Li_*$, are material dependent elastic constants, and 
%\begin{equation}\label{eqn:Landau-deGennes_invariants}
%\begin{split}
%|\nabla \tQ|^2 := (\partial_{k} Q_{ij})^2, \quad 
%|\nabla \cdot \tQ|^2 := (\partial_{j} Q_{ij})^2, \quad (\nabla \tQ)\tp \trid \nabla \tQ &:= (\partial_{j} Q_{ik}) (\partial_{k} Q_{ij}), \\
%\nabla \tQ \trid (\levi \cdot \tQ) := \levi_{j k l} Q_{ji} \partial_{l} Q_{ki}, \quad \nabla \tQ \trid [(\tQ \cdot \nabla) \tQ] := Q_{lk} & (\partial_{l} Q_{ij}) (\partial_{k} Q_{ij}),
%\end{split}
%\end{equation}
%where we use the convention of summation over repeated indices.  The five elastic constants are related to the five independent constants of the Oseen-Frank model \cite{Mori_JJAP1999,Mottram_arXiv2014}.  Indeed, $\Li_{4}$ accounts for cholesteric ``twist'' and $\Li_{*}$ is needed to have five independent constants.  Since optimal control of the Landau-de Gennes model has yet to be developed, we simplify \cref{eqn:Landau-deGennes_energy_density} to the one-constant LdG model, i.e. $\Li_{1} = 1$ and $\Li_{i} = 0$, for $i=2,3,4$, and $\Li_{*} = 0$.  Future extensions of this work will consider the more general case.

The bulk potential $\widetilde{\bulkfunc}$ models the nematic-to-isotropic phase transition.  It is a (non-symmetric) double-well type of function that is typically given by
\vspace{-0.1cm}
\begin{equation}\label{eqn:Landau-deGennes_bulk_potential}
\begin{split}
\widetilde{\bulkfunc} (\tQ) = \bulkK - \frac{\bulkA}{2} \tr (\tQ^2) - \frac{\bulkB}{3} \tr (\tQ^3) + \frac{\bulkC}{4} \left( \tr (\tQ^2) \right)^2, \quad \widetilde{\bulkfunc} \geq 0,
\end{split}
\end{equation}
where $\bulkA$, $\bulkB$, $\bulkC$ are material parameters.  The choice of constants affects the stability of the nematic phase.  Since we are only interested in the nematic phase, $\bulkA$, $\bulkB$, $\bulkC$ are positive, and $\bulkK$ is a convenient positive constant to ensure $\widetilde{\bulkfunc} \geq 0$.  
Stationary points of $\widetilde{\bulkfunc}$ are either \emph{uniaxial} or \emph{isotropic} \cite{Majumdar_EJAM2010}, i.e. if $\tQ$ is uniaxial, then it corresponds to
\vspace{-0.2cm}
\begin{equation}\label{eqn:uniaxial}
\begin{split}
    \tQ_{ij} = s_0 \left( n_i n_j - \delta_{ij}/3 \right), \text{ for } 1 \leq i,j \leq 3,
\end{split}
\end{equation}
where $s_0 > 0$ depends on $\widetilde{\bulkfunc}$, $[n_{i}]_{i=1}^3 \equiv n \in \R^3$ is a unit vector, and $\delta_{ij}$ is the Kronecker delta; $\tQ = \tzero$ is the isotropic state.  The parameter $\bulketa > 0$, appearing in \cref{eqn:Landau-deGennes_general}, is known as the \emph{nematic correlation length}, and usually satisfies $\bulketa \ll 1$.

The surface energy $\LdGsurf(\tQ)$, with parameter $\anchcoef > 0$, accounts for \emph{weak anchoring} of the LC material at the boundary, i.e. it imposes an energetic penalty on the boundary conditions for $\tQ$.  In this paper, we use a Rapini-Papoular type anchoring energy \cite{Barbero_JPF1986}:
\vspace{-0.2cm}
\begin{equation}\label{eqn:Landau-deGennes_surf_energy}
\begin{split}
\LdGsurf (\tQ) = \frac{1}{2} \tr \left( \tQ - \bdycon \right)^2 \equiv \frac{1}{2} |\tQ - \bdycon|^2,
\end{split}
\end{equation}
where $\bdycon : \Gm \to \symmtraceless$ and we take $\bdycon$ to be one of the control variables in the optimal control problem stated in \cref{sec:optim_ctrl_prob}.  
The function $\LdGrhs(\cdot)$ is used to (approximately) model interactions of the LC material with external fields, e.g. an electric field.  In this paper, we take $\LdGrhs(\tQ) = \tQ \dd \domcon$, where $\domcon : \Om \to \symmtraceless$ is also a control variable.  

Local minimizers of the energy $\ELdG [\tQ]$ can be found through an $L^2$ gradient flow, which can be thought of as a simple damped, evolutionary LdG model.  This leads to the following parabolic equation for $\tQ$ in strong form:
\vspace{-0.2cm}
\begin{subequations}\label{eq:forward_problem}
\begin{align}
\tQ_t - \Delta \tQ + \frac{1}{\bulketa^2} \tracelessop{\widetilde{\bulkfunc}'(\tQ)}
&= \domcoef \domcon, \text{ in } \Om \times (0,\tfinal), \\
\partial_{\vnu} \tQ + \anchcoef \tQ
&= 
\anchcoef \bdycon,  \text{ on } \Gm \times (0,\tfinal), \\
\tQ(\cdot,0) &= \tQ_0, \quad ~ \text{ in } \Om,
\end{align}
\end{subequations}
where $\tracelessop{M}$ denotes the traceless part of a symmetric tensor $M$, $\tQ_0 : \Om \to \symmtraceless$ is the initial condition, and $\tfinal > 0$ is a given final time.  The system \cref{eq:forward_problem} can be viewed as a tensor-valued analog of the Allen-Cahn equation with Robin boundary conditions.  Taking $\anchcoef \to \infty$ recovers \emph{strong anchoring}, i.e. the Dirichlet condition $\tQ = \bdycon$ on $\Gm$.  In this paper, $\anchcoef > 0$ is fixed and finite.

The first and second derivatives of $\widetilde{\bulkfunc}$ are a 2-tensor and 4-tensor, respectively:
\vspace{-0.2cm}
\begin{equation}\label{eqn:1st_2nd_deriv_bulkfunc}
\begin{split}
    \widetilde{\bulkfunc}' (\tQ) &= -\bulkA \tQ - \bulkB \tQ^2 + \bulkC \, \tr (\tQ^2) \tQ, \\
    [\widetilde{\bulkfunc}'' (\tQ)]_{ijkl} &= -\bulkA \delta_{ik} \delta_{jl} - 2 \bulkB \tQ_{jk} \delta_{li} + \bulkC \left( 2 \tQ_{ij} \tQ_{kl} + \tr (\tQ^2) \delta_{ik} \delta_{jl} \right),
\end{split}
\end{equation}
where the second derivative is written with indices for clarity.  In addition, for analytical purposes, we modify the bulk potential to have quadratic growth as $|\tQ| \to \infty$.  For instance, let $\rho : [0, \infty) \to \R_{+}$ be a $C^{\infty}$ cut-off function such that
\vspace{-0.2cm}
\begin{equation*}%\label{eqn:smooth_cut-off}
\begin{split}
	\rho(r) = 1, \text{ if } r < b_1, ~~ \rho(r) = \text{monotone}, \text{ if } b_1 \leq r \leq b_2, ~~ \rho(r) = 0, \text{ if } r > b_2,
\end{split}
\end{equation*}
where $1 \leq b_1 < b_2$ are two fixed constants.  Then, the modified potential is given by
\vspace{-0.2cm}
\begin{equation}\label{eqn:mod_LdG_bulk_potential}
\begin{split}
\bulkfunc (\tQ) = \widetilde{\bulkfunc} (\tQ) \rho(\tr (\tQ^2)) + \bulkC^2 \tr (\tQ^2) \left[ 1 - \rho(\tr (\tQ^2)) \right],
\end{split}
\end{equation}
which will be used throughout the remainder of the paper.  
Clearly, there exist uniform constants $c_0$, $c_1$, $c_2$, depending on $\bulkK$, $\bulkA$, $\bulkB$, $\bulkC$, $b_1$, $b_2$, such that
\begin{equation}\label{eqn:mod_LdG_bulk_pot_bnds}
\begin{split}
	|\bulkfunc (\tQ)| \leq \bulkK + c_0 |\tQ|^2, \quad |\bulkfunc' (\tQ)| \leq c_1 |\tQ|, \quad |\bulkfunc'' (\tQ)| \leq c_2, ~ \text{ for all } \tQ \in \symmtraceless,
\end{split}
\end{equation}
noting that $|\tQ|^2 \equiv \tr (\tQ^2)$.  For typical choices of the physical constants in \cref{eqn:Landau-deGennes_bulk_potential}, choosing $b_1 = 1$ and $b_2 = 2$ is effective since this modification does not change the location of the local minimizers. In addition to \cref{eqn:mod_LdG_bulk_pot_bnds} we observe that $\psi''$ is globally Lipschitz with uniformly bounded derivative $\psi'''(\tQ)$ for all $\tQ \in \symmtraceless$.

In addition, we will often make use of the convex splitting
\vspace{-0.2cm}
\begin{equation}\label{eqn:bulkfunc_convex_split}
\begin{split}
	\bulkfunc(\tQ) &\equiv \left[ (\bulkstab/2) \tr (\tQ^2) + \bulkfunc (\tQ) \right] - (\bulkstab/2) \tr (\tQ^2) =: \bulkimp(\tQ) - \bulkexp(\tQ),
\end{split}
\end{equation}
where $\bulkimp, \bulkexp$ are non-negative convex functions with $\bulkstab > 0$ (i.e. a ``stabilization'' constant) chosen sufficiently large to ensure a convex split.  In particular, we note that $\bulkimp'(\tQ)$ and $\bulkexp'(\tQ)$ are monotone functions and there is a constant $0 < \bulkbnd < \bulkstab$ such that, for $\bulkstab$ sufficiently large, $\bulkimp(\tQ)$ satisfies the lower bounds:
\vspace{-0.2cm}
\begin{equation}\label{eqn:bulkimp_lower_bnd}
\begin{split}
	\bulkimp(\tQ) \geq \bulkK + \frac{\bulkbnd}{2} |\tQ|^2, \quad \bulkimp'(\tQ) \dd \tQ \geq \bulkbnd |\tQ|^2, \quad \tP \dd \bulkimp''(\tQ) \dd \tP \geq 3 \bulkbnd |\tP|^2.
\end{split}
\end{equation}

Henceforth, we take all constants to be non-dimensional; see \cite{Gartland_MMA2018} for a detailed treatment of how the LdG model is non-dimensionalized.

%%%%%%%%%%%%%%%%%%%%%%%%%%%%%%%%%%%%%%%%%%%%%
\subsection{Optimal control problem}\label{sec:optim_ctrl_prob}
%%%%%%%%%%%%%%%%%%%%%%%%%%%%%%%%%%%%%%%%%%%%%
We formulate an optimal tracking control problem for the LdG model.  The following Sobolev spaces are used throughout:
\vspace{-0.2cm}
\begin{equation*}
\begin{split}
    \Vs := H^1(\Om;\symmtraceless),\quad \Hs := L^2(\Om;\symmtraceless), \quad \HGs := L^2(\Gm;\symmtraceless),
\end{split}
\end{equation*}
where each space is endowed with its respective natural norm.  For every $t \in (0, \tfinal]$, the space-time cylinder and boundary are defined as
\vspace{-0.2cm}
\begin{equation}\label{eqn:space_time_cylinder}
\begin{split}
    \domcyl_{t} &:= \Om \times (0,t), ~~ \domcyl \equiv \domcyl_{\tfinal}, ~~ \bdycyl_{t} := \Gm \times (0,t), \text{ and } \bdycyl \equiv \bdycyl_{\tfinal}.
\end{split}
\end{equation}
Next, we introduce target functions:
\vspace{-0.2cm}
\begin{equation}\label{eq:targets_bounds}
\begin{split}
\domtarget &\in L^{2}(\domcyl), \quad
\bdytarget \in L^{2}(\bdycyl), \quad
\fintarget \in H^1(\Omega).
\end{split}
\end{equation}
In contrast to optimal control problems with scalar or vector-valued controls, the bound constraints used to defined the set of admissible controls $\conadmis$ are slightly more complicated. These are discussed below.

We now define the optimal control problem: minimize the functional
\vspace{-0.2cm}
\begin{equation}\label{eq:optconprob}
\begin{split}
\costobj(\tQ,\domcon,\bdycon) &:=
\frac{\beta_{\domcyl}}{2} \| \tQ - \domtarget \|^2_{L^2(\domcyl)} + 
\frac{\beta_{\bdycyl}}{2} \| \tQ - \domtarget \|^2_{L^2(\bdycyl)} \\
+ \frac{\beta_{\tfinal}}{2} & \| \tQ(\cdot,\tfinal) - \fintarget \|^2_{\Hs} + \frac{\alpha_{\domcyl}}{2} \| \domcon \|^2_{L^2(\domcyl)} + \frac{\alpha_{\bdycyl}}{2}  \| \bdycon \|^2_{H^1(0,\tfinal;\HGs)},
\end{split}
\end{equation}
over a set of admissible controls $(\domcon,\bdycon) \in \conadmis \subset L^2(\domcyl) \times L^2(\bdycyl)$, subject to $\tQ$ satisfying the PDE constraint \cref{eq:forward_problem}.  The coefficients satisfy $\beta_{\domcyl}, \beta_{\bdycyl}, \beta_{\tfinal}, \alpha_{\domcyl}, \alpha_{\bdycyl} \ge 0$, where at least one of $\beta_{\domcyl}, \beta_{\bdycyl}, \beta_{\tfinal}$ is nonzero, and $\alpha_{\domcyl}, \alpha_{\bdycyl} > 0$. Tracking objectives as in \cref{eq:optconprob} are ubiquitous in optimal control. For our application, the final two summands are quadratic cost functionals that force a certain regularity. The first two terms represent the desire to track a transition of targeted textures  $\domtarget$, whereas the third summand $\fintarget$ is a desired stationary texture. Nematic textures correspond to director orientations associated with $\tQ$ at each point $x$ in the domain. Therefore, a desired texture $\fintarget$ could be one in which all directors are oriented in the same direction, e.g., parallel to a surface, or in which the directors follow a particular pattern. 

In practice, the available control mechanisms may be technically limited, e.g., finite dimensional, stationary or only on the boundary. We include distributed controls in the bulk and boundary here for more generality. Restrictions to the cases just mentioned would not change the core of the analysis.  The control set $\conadmis$ is always taken to be nonempty, closed, and convex. An example of such a set is
\vspace{-0.3cm}
\begin{equation}\label{eq:con_constr}
\begin{split}
\conadmis := &\left\{ \tP \in L^2(\domcyl) \mid 
|\tP| \le \dombnd,
\text{ a.e. in } \domcyl \right\} \times \left\{ \tP \in H^1(0,\tfinal;\HGs) \mid 
|\tP| \le 1, \text{ a.e. in } \bdycyl \right\}.
\end{split}
\end{equation}
Here, $\dombnd$ may be an arbitrary, essentially bounded scalar-valued function on $\domcyl$. The constant bound $1$ for the boundary controls is in fact dictated by the application (recall the eigenvalue bounds discussed in \cref{ssec:ldg}).  Note that if $\bdycon$ is constant in time, then the $H^1(0,\tfinal;\HGs)$ space is replaced by $\HGs$, and the $\alpha_{\bdycyl}$ term in \cref{eq:optconprob} becomes an $\HGs$ norm.  In most applications, boundary controls are \emph{constant} in time.  Allowing the boundary control to vary in time is mainly for the sake of generality.

Due to the similarities of \cref{eq:forward_problem} with the Allen-Cahn equation, there are a wide array of relevant contributions in the literature, where optimal control of Allen-Cahn and related equations, e.g., Cahn-Hilliard, have been studied. We highlight here several early studies \cite{Hoffman_NFAO1992,Heinkenschloss_OMS1997,Heinkenschloss_CC1999}, which focused on the optimal control of Cahn-Hilliard (phase field problems of Caginalp-type); more recent work \cite{FarshbafShaker_NFAO2012,FarshbafShaker_AMO2015}, in which the author studied the optimal control of scalar- and vector-valued Allen-Cahn equations with a nonsmooth bulk energy term (obstacle potential), and \cite{Colli_SJCO2015}.  In some sense, \cite{Colli_SJCO2015} is the most relevant.  However, there are several major differences.  Our boundary condition has no diffusive term, because it is not clear how that would manifest in an LC system, which thus affects our solution's regularity.  Moreover, we are dealing with a parabolic \emph{system} with \emph{tensor-valued} solutions and controls; the PDE in \cite{Colli_SJCO2015} is scalar-valued.  This affects several arguments needed to derive first-order optimality conditions and greatly increases the difficulty for numerical methods. 

%---------------------------------------------------------------
\section{Well-posedness of the forward problem}\label{sec:well_psd}
%---------------------------------------------------------------

To prove well-posedness of \cref{eq:forward_problem}, we start with the usual arguments (cf. \cite{Evans:book}).  The minimal regularity of the data is given by
\vspace{-0.2cm}
\begin{equation}\label{eq:minimal_input_reg}
\domcon \in L^2(0,\tfinal;\Hs),\quad
\bdycon \in H^1(0,\tfinal;\HGs)\quad
\tQ_0 \in \Vs,
\end{equation}
and the space of weak solutions we consider is
\vspace{-0.1cm}
\begin{equation}\label{eq:space_of_weak_soln}
	\bigW := L^{\infty}(0,\tfinal;\Vs) \cap L^{\infty}(0,\tfinal;\HGs) \cap H^1(0,\tfinal;\Hs) \cap L^2(0,\tfinal;\Vs).
\end{equation}
\vspace{-0.4cm}
\begin{remark}
Since $\tfinal$ is finite and $L^{\infty}(0,\tfinal;\Vs)$ is continuously embedded into $L^2(0,\tfinal;\Vs)$, the fourth space in the definition of $\bigW$ is redundant. However, we keep it as written to emphasize that the $L^{\infty}(0,\tfinal;\Vs)$ and $L^2(0,\tfinal;\Vs)$ norms are utilized at different points in the analysis.
\end{remark}

Our notion of weak solution is as follows.  We say $\tQ \in \bigW$ is a weak solution of \cref{eq:forward_problem}, if $\tQ(0) \equiv \tQ(\cdot,t) |_{t=0} = \tQ_{0}$, and for a.e. $s \in (0,\tfinal)$, we have
\vspace{-0.2cm}
\begin{equation}\label{eq:weak_solution}
\begin{split}
\inner{\tQ_t(s)}{\tP(s)}{\Hs} + & \inner{\nabla \tQ(s)}{\nabla \tP(s)}{\Hs} + 
\frac{1}{\bulketa^2} \inner{\bulkfunc'(\tQ(s))}{\tP(s)}{\Hs} \\
+ \anchcoef \inner{\tQ(s)}{\tP(s)}{\HGs} & = \anchcoef \inner{\bdycon(s)}{\tP(s)}{\HGs} + \domcoef \inner{\domcon(s)}{\tP(s)}{\Hs},
\end{split}
\end{equation}
for all $\tP \in H^1(0,\tfinal;\Hs) \cap L^2(0,\tfinal;\Vs)$  with $\tP(0) = 0$, where we introduced the inner products on $\Hs$ and $\HGs$, respectively.

The solutions of the forward problem \cref{eq:weak_solution} are tensor-valued in space. There is little work on such problems in the control literature. Nevertheless, in many instances, we can exploit the Hilbert space structure on $\Vs$ or $\Hs$ and extend the derivations of typical energy estimates and Lipschitz continuity results. As a consequence,  the proofs of any results that follow the corresponding scalar or vector-valued cases without major changes have been drastically shortened and placed in the appendix.
%moved to the supplemental material.

%---------------------------------------------------------------
\subsection{Uniqueness of the state and a Lipschitz bound}\label{ssec:state_con}
%---------------------------------------------------------------

Under the assumption that solutions $\tQ$ with the appropriate regularity exist, we can prove Lipschitz continuity with respect to the input controls and therefore, a fortiori, uniqueness of solutions. Existence of solutions ultimately follows from a standard Galerkin approach.

\begin{theorem}[Continuous dependence on the data]\label{thm:soln_cont}
Let
$
\tQ_{1}, \tQ_{2} \in \bigW
% L^{\infty}(0,\tfinal;\Vs) \cap L^{\infty}(0,\tfinal;\HGs) \cap H^1(0,\tfinal;\Hs) \cap L^2(0,\tfinal;\Vs).
$
be two solutions of \cref{eq:weak_solution} corresponding to the input variables $\domconi{i}$, $\bdyconi{i}$, $\tQ_{0,i}$, for $i = 1,2$, which satisfy \cref{eq:minimal_input_reg}. Then there exists a constant $c > 0$, independent of the input variables, such that
\vspace{-0.2cm}
\begin{multline}\label{eq:continuity_bound}
\| \tQ_{1} - \tQ_{2} \|^2_{C([0,\tfinal];\Hs)} + 
\| \tQ_{1} - \tQ_{2} \|^2_{L^2(0,\tfinal;\Vs)} \\
\leq c 
\left(
\| \tQ_{0,1} - \tQ_{0,2} \|^2_{\Vs} + 
\domcoef \|\domconi{1} - \domconi{2}\|^2_{L^2(\domcyl)} +
\anchcoef \|\bdyconi{1} - \bdyconi{2}\|^2_{H^1(0,\tfinal;\HGs)}
\right).
\end{multline}
\end{theorem}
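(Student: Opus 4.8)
The plan is to derive a single energy estimate for the difference $\tR := \tQ_1 - \tQ_2$ and close it with Grönwall's inequality. Subtracting the two instances of \cref{eq:weak_solution} shows that, for a.e. $s \in (0,\tfinal)$, $\tR$ satisfies the weak identity with initial datum $\tR(0) = \tQ_{0,1} - \tQ_{0,2}$ and a right-hand side driven only by the control differences $\domconi{1} - \domconi{2}$ and $\bdyconi{1} - \bdyconi{2}$. Because each solution lies in $\bigW$, we have $\tR \in H^1(0,\tfinal;\Hs) \cap L^2(0,\tfinal;\Vs)$, so the pointwise-in-time formulation legitimately admits the test function $\tP(s) = \tR(s)$; the standard Lions--Magenes argument gives that $s \mapsto \| \tR(s) \|^2_{\Hs}$ is absolutely continuous with $\inner{\tR_t(s)}{\tR(s)}{\Hs} = \tfrac{1}{2}\tfrac{d}{ds}\| \tR(s) \|^2_{\Hs}$. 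This produces the energy identity
\[
\tfrac{1}{2}\frac{d}{ds}\| \tR(s) \|^2_{\Hs} + \| \nabla \tR(s) \|^2_{\Hs} + \frac{1}{\bulketa^2}\inner{\bulkfunc'(\tQ_1(s)) - \bulkfunc'(\tQ_2(s))}{\tR(s)}{\Hs} + \anchcoef \| \tR(s) \|^2_{\HGs} = \anchcoef \inner{\bdyconi{1}(s) - \bdyconi{2}(s)}{\tR(s)}{\HGs} + \domcoef \inner{\domconi{1}(s) - \domconi{2}(s)}{\tR(s)}{\Hs}.
\]

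Next I would bound each term. The crucial point is that the nonlinearity is controlled by the \emph{modified} potential: since $|\bulkfunc''| \le c_2$ uniformly by \cref{eqn:mod_LdG_bulk_pot_bnds}, the map $\bulkfunc'$ is globally Lipschitz, whence $|\inner{\bulkfunc'(\tQ_1) - \bulkfunc'(\tQ_2)}{\tR}{\Hs}| \le c_2 \| \tR \|^2_{\Hs}$, with a constant that does not depend on the two solutions. The two right-hand-side terms are handled by Cauchy--Schwarz and Young's inequality; the resulting $\tfrac{\anchcoef}{2}\| \tR \|^2_{\HGs}$ is absorbed into the coercive boundary term on the left. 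Rearranging (and multiplying by $2$) yields a differential inequality
\[
\frac{d}{ds}\| \tR(s) \|^2_{\Hs} + 2\| \nabla \tR(s) \|^2_{\Hs} + \anchcoef \| \tR(s) \|^2_{\HGs} \le C\, \| \tR(s) \|^2_{\Hs} + g(s),
\]
with $C := 2c_2/\bulketa^2 + \domcoef$ and $g(s) := \anchcoef \| \bdyconi{1}(s) - \bdyconi{2}(s) \|^2_{\HGs} + \domcoef \| \domconi{1}(s) - \domconi{2}(s) \|^2_{\Hs}$.

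Grönwall's inequality applied to $y(s) := \| \tR(s) \|^2_{\Hs}$ (dropping the nonnegative gradient and boundary terms) then gives $y(s) \le e^{Cs}\big(y(0) + \int_0^{\tfinal} g\,d\tau\big)$ for every $s \in [0,\tfinal]$. Since $\tfinal < \infty$ the exponential is a uniform constant, $y(0) = \| \tQ_{0,1} - \tQ_{0,2} \|^2_{\Hs} \le \| \tQ_{0,1} - \tQ_{0,2} \|^2_{\Vs}$, and $\int_0^{\tfinal} g\,d\tau = \domcoef\| \domconi{1} - \domconi{2} \|^2_{L^2(\domcyl)} + \anchcoef\| \bdyconi{1} - \bdyconi{2} \|^2_{L^2(0,\tfinal;\HGs)}$, the latter dominated by the $H^1(0,\tfinal;\HGs)$ norm appearing in \cref{eq:continuity_bound}; taking the supremum over $s$ delivers the $C([0,\tfinal];\Hs)$ bound. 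To obtain the $L^2(0,\tfinal;\Vs)$ bound I would integrate the differential inequality over $(0,\tfinal)$: the term $\int_0^{\tfinal}\| \nabla \tR \|^2_{\Hs}\,ds$ surfaces on the left, and combining it with the already-established $L^\infty$-in-time control of $\| \tR \|^2_{\Hs}$ (hence of $\int_0^{\tfinal}\| \tR \|^2_{\Hs}\,ds$) reconstitutes the full spatial $H^1$ norm. Uniqueness is then immediate: for identical data $g \equiv 0$ and $\tR(0) = 0$, so $\tR \equiv 0$.

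I expect the only delicate steps to be (i) justifying the energy identity, i.e. that the stated regularity of $\tR$ together with the test-space restriction $\tP(0) = 0$ permits the choice $\tP = \tR$ and the product-rule identity for $\tfrac{d}{ds}\| \tR \|^2_{\Hs}$, and (ii) ensuring the nonlinear term is estimated via the globally Lipschitz modified potential rather than the original cubic $\widetilde{\bulkfunc}'$, which would spoil the independence of $c$ from the solutions. The tensor-valued setting introduces no essential new difficulty, since every estimate uses only the Frobenius inner-product Hilbert structure of $\symmtraceless$, so the scalar and vector arguments carry over verbatim.
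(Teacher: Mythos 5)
Your proof is correct, and its overall skeleton — test the pointwise-in-time weak identity with the difference $\tQ_1 - \tQ_2$, apply weighted Young's inequalities, and close with Gr\"onwall — is exactly the paper's. The one place you genuinely diverge is the treatment of the non-monotone nonlinearity: you bound $\inner{\bulkfunc'(\tQ_1) - \bulkfunc'(\tQ_2)}{\tQ_1 - \tQ_2}{\Hs}$ using the \emph{global Lipschitz continuity} of $\bulkfunc'$, which follows from the uniform bound $|\bulkfunc''| \le c_2$ in \cref{eqn:mod_LdG_bulk_pot_bnds} and hence relies essentially on the cut-off modification of the potential. The paper instead invokes the convex splitting \cref{eqn:bulkfunc_convex_split}: writing $\bulkfunc' = \bulkimp' - \bulkexp'$, the monotone part $\bulkimp'$ contributes a sign-definite (droppable) term, and only the linear-growth part $\bulkexp'(\tQ) = \bulkstab \tQ$ survives, yielding the one-sided bound $\geq -\bulkstab \|\tQ_1 - \tQ_2\|^2_{\Hs}$ needed for Gr\"onwall. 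Both arguments are valid here and produce a constant depending only on fixed model parameters, but the splitting route is the more robust one: it requires only a monotone-plus-linear decomposition rather than a globally bounded second derivative, so it would survive if the cut-off were removed and the original quartic potential $\widetilde{\bulkfunc}$ were used. Your own closing caveat — that the argument would fail for the unmodified potential — is precisely the limitation the paper's splitting avoids. Everything else in your write-up (the admissibility of the test function despite the $\tP(0)=0$ convention, the Lions--Magenes chain rule, the recovery of the $L^2(0,\tfinal;\Vs)$ bound by integrating the differential inequality, and the uniqueness corollary) is sound and matches what the paper leaves implicit.
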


\begin{remark}
We use $c > 0$ as a generic constant throughout the text. We also note that the state space $\bigW$ can be compactly embedded into the space $C([0,\tfinal];\Hs)$.
%As a result, the use of the $C([0,\tfinal];\Hs)$-norm is justified in the Lipschitz bound.
\end{remark}

\begin{proof}
%See \cref{app:thm.3.2}.
The result follows by standard energy techniques, i.e. first test with the difference of solutions.   {\color{black}The lack of a monotone nonlinear operator is handled using the convex splitting \eqref{eqn:bulkfunc_convex_split}, which exploits the linear growth of $\psi'_e$. } Afterwards, we apply weighted Young's inequalities and the classic Gronwall lemma.  We omit the details.
\end{proof}

%\Cref{thm:soln_cont} also indicates the uniqueness of a solution, provided one exists. Indeed, if we set $(\tQ_{0,1},\domconi{1},\bdyconi{1}) = (\tQ_{0,2},\domconi{2},\bdyconi{2})$ then by \cref{eq:continuity_bound}, we deduce that $\tQ_{1} = \tQ_{2}$ almost everywhere on 
%$\domcyl$.

%---------------------------------------------------------------
\subsection{Existence and Energy Estimates}\label{ssec:fg_appr}
%---------------------------------------------------------------
This section is concerned with the existence of weak solutions. We use a Faedo-Galerkin approach, for which we require the following assumption. This condition will be tacitly assumed throughout the remainder of the text.

\begin{assumption}\label{as:findim}
For each $n \in \mathbb N$ (sufficiently large) there is an $n$-dimensional subspace $\Vs_n$  of $\Vs$ such that if $\{ \tY_{k} \}_{k=1}^n$ is the basis of $\Vs_{n}$ and $\Pi_n : \Vs \to \Vs_{n}$ is the linear projection onto $\Vs_{n}$, then $\Pi_n$ satisfies the following convergence property: $\| \Pi_{n} \tP - \tP \|_{\Vs} \to 0$, as $n \to \infty$, for all $\tP \in \Vs$.
%\begin{equation*}
%	\lim_{n \to \infty} \| \Pi_{n} \tP - \tP \|_{\Vs} = 0, \quad \text{for all } \tP \in \Vs.
%\end{equation*}
\end{assumption}

Typically $\Vs_n$ is based on the eigenvectors of the Laplacian associated to (in this case) homogeneous Robin boundary conditions. As another example, when $\Omega$ has a piecewise $C^2$ boundary, $\Vs_{n}$ can be a conforming finite element space with $n$ nodal degrees-of-freedom defined over a conforming (curvilinear) mesh of $\Om$.  
Next, we define  
$
\tQ^{n}_{0} := \Pi_n \tQ_{0}, 
$
which under \Cref{as:findim} means $\tQ^{n}_{0}$ converges strongly to $\tQ_{0}$ in $V$, as $n \to +\infty$.  Set $\tQ^n_{0,k} := (\tQ_{0},\tY_{k})_{\Hs} $ for each $k = 1,\dots,n$. 

%---------------------------------------------------------------
\subsubsection{Existence of a discrete solution in $H^1(0,\tfinal; \Vs_n)$}\label{ssec:exist_disc}
%---------------------------------------------------------------
We start with the existence of unique solutions to the semi-discrete system.
\begin{proposition}\label{prop:disc-soln}
There exists a unique solution $\tQ^{n} \in H^1(0,\tfinal; \Vs_n)$ such that
\vspace{-0.2cm}
\begin{equation}\label{eq:weak_discrete}
\aligned
\inner{\tQ^{n}_{t}(s)}{\tP}{\Hs} + \inner{\nabla \tQ^{n}(s)}{\nabla \tP}{\Hs} + \anchcoef \inner{\tQ^{n}(s)}{\tP}{\HGs} + 
\frac{1}{\bulketa^2} \int_{\Omega} \bulkfunc'(\tQ^{n}(s)) \dd \tP \\
= \domcoef \inner{\domcon(s)}{\tP}{\Hs} + \anchcoef \inner{\bdycon(s)}{\tP}{\HGs},
\endaligned
\end{equation}
for all  
% $\tP \in H^1(0,\tfinal;\Hs_n) \cap L^2(0,\tfinal;\Vs_n)$  with $\tP(0) = 0$
$\tP \in \Vs_{n}$ and for a.e.\ $s \in (0,\tfinal)$, 
 with $\tQ^{n}(0) = \tQ^{n}_{0}$.
\end{proposition}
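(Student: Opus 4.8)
The plan is to reduce the semi-discrete weak form \cref{eq:weak_discrete} to a system of ordinary differential equations for the coefficients of $\tQ^n$ in the basis $\{\tY_k\}_{k=1}^n$ and then invoke Carathéodory's existence theory. Writing $\tQ^{n}(s) = \sum_{k=1}^{n} q_k(s)\, \tY_k$ and testing against each $\tY_j$ for $j = 1,\dots,n$, the equation \cref{eq:weak_discrete} becomes
\begin{equation*}
\vM \dot{\vq}(s) + \vK \vq(s) + \vF(\vq(s)) = \vb(s), \qquad \vq(0) = \vq_0,
\end{equation*}
where $\vq(s) = [q_1(s),\dots,q_n(s)]\tp$, the mass matrix has entries $(\vM)_{jk} = \inner{\tY_k}{\tY_j}{\Hs}$, the stiffness matrix collects the Dirichlet and Robin terms $(\vK)_{jk} = \inner{\nabla \tY_k}{\nabla \tY_j}{\Hs} + \anchcoef \inner{\tY_k}{\tY_j}{\HGs}$, the nonlinear vector field is $(\vF(\vq))_j = \bulketa^{-2} \int_{\Omega} \bulkfunc'\!\big(\textstyle\sum_k q_k \tY_k\big) \dd \tY_j$, and $\vb(s)$ assembles the two control terms on the right-hand side of \cref{eq:weak_discrete}. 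The initial vector $\vq_0$ is the coefficient vector of $\tQ^n_0 = \Pi_n \tQ_0$.

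First I would note that $\vM$ is symmetric positive definite: for $\valpha \in \R^n \setminus \{\vzero\}$, $\valpha\tp \vM \valpha = \| \sum_k \alpha_k \tY_k \|^2_{\Hs} > 0$ since $\{\tY_k\}$ is a basis. Hence $\vM$ is invertible and the system takes the explicit form $\dot{\vq}(s) = \vM^{-1}\big( \vb(s) - \vK \vq(s) - \vF(\vq(s)) \big) =: \vG(s,\vq(s))$. It then remains to verify that $\vG$ satisfies the hypotheses of the Carathéodory theorem.

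The key structural point, and the one I expect to carry the argument, is the global Lipschitz property of $\vF$, which is inherited from the modification of the bulk potential. By \cref{eqn:mod_LdG_bulk_pot_bnds} we have $|\bulkfunc''| \leq c_2$ uniformly, so $\bulkfunc'$ is globally Lipschitz on $\symmtraceless$. Consequently, for each fixed $s$, the map $\vq \mapsto \vF(\vq)$ is globally Lipschitz on $\R^n$ with a constant depending only on $c_2$, $\bulketa$, and the (finite, $n$-dependent) norms of the basis functions; since $\vK$ is linear, $\vG(s,\cdot)$ is globally Lipschitz in $\vq$ uniformly in $s$. Measurability of $s \mapsto \vG(s,\vq)$ follows from that of $\vb$, and $\vb \in L^2(0,\tfinal;\R^n)$ because $\domcon \in L^2(0,\tfinal;\Hs)$, $\bdycon \in H^1(0,\tfinal;\HGs) \hookrightarrow L^2(0,\tfinal;\HGs)$, and the traces of the $\tY_j$ are bounded. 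Thus $\vG$ is a Carathéodory function, globally Lipschitz in the state and $L^2$ (hence $L^1$) in time, which yields a unique local absolutely continuous solution.

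Finally, I would promote the local solution to a global one on $[0,\tfinal]$ and upgrade its regularity. Global existence follows from the linear growth $|\vG(s,\vq)| \leq a(s) + L|\vq|$ with $a \in L^1$, a consequence of the bound $|\bulkfunc'(\tQ)| \leq c_1 |\tQ|$ in \cref{eqn:mod_LdG_bulk_pot_bnds} together with the linearity of $\vK$; a standard Gronwall argument bounds $|\vq(s)|$ on any finite interval, precluding finite-time blow-up, so the solution extends to all of $[0,\tfinal]$. Because $\vq$ is then continuous, and hence bounded, on the compact interval, both $\vK\vq(\cdot)$ and $\vF(\vq(\cdot))$ are bounded, while $\vb \in L^2$, so the explicit form gives $\dot{\vq} \in L^2(0,\tfinal;\R^n)$; combined with $\vq \in L^2$ this yields $\vq \in H^1(0,\tfinal;\R^n)$, i.e.\ $\tQ^{n} \in H^1(0,\tfinal;\Vs_n)$ with $\tQ^{n}(0) = \tQ^n_0$. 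Uniqueness is immediate from the global Lipschitz property via Gronwall. The only mild subtlety I anticipate is keeping track of the $n$-dependent constants and emphasizing that it is the \emph{modified} potential, not the original cubic one, that supplies the global (rather than merely local) Lipschitz estimate needed to rule out blow-up.
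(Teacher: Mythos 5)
Your proposal is correct and takes essentially the same route as the paper, which simply states that \cref{eq:weak_discrete} reduces to a coupled ODE system handled by the Carath\'eodory existence theorem and omits the details. You have filled in exactly the steps the authors leave implicit: the basis expansion, invertibility of the mass matrix, the global Lipschitz bound on $\bulkfunc'$ coming from the modified potential via \cref{eqn:mod_LdG_bulk_pot_bnds}, the linear-growth/Gronwall argument precluding blow-up, and the upgrade to $H^1(0,\tfinal;\Vs_n)$ from $\vb \in L^2$.
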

\begin{proof}
The proof involves a standard application of the Carath\'eodory existence theorem, i.e. \cref{eq:weak_discrete} reduces to a system of coupled ODEs, for which existence and uniqueness is straightforward to show.
%and is included in \cref{app:prop.3.5}.
\end{proof}

%\subsubsection{First a priori estimate}\label{ssec:1st_apriori}
\subsubsection{A priori estimates}\label{ssec:apriori}
Given the existence of finite dimensional solutions, we now consider energy estimates.  Let $M_1 : \R^3 \to \R$ be given by 
$M_1(x_1,x_2,x_3) = c (x_1+ \eta_{\Omega} x_2 + \eta_{\Gm} x_3)$.  
For readability, we will often leave off the arguments of $M_1$ when it is clear in context; $c > 0$ is a generic constant that can be updated as needed. It will never depend on $n$, the controls, or the input data.
\begin{proposition}\label{prop:enrgi-a}
Suppose that $\tQ_{0}, \domcon, \bdycon$ satisfy \cref{eq:minimal_input_reg}. Then for all $n \in \mathbb N$,  the solutions $Q^n$ from \Cref{prop:disc-soln} satisfy the bound
\vspace{-0.2cm}
\begin{equation}\label{eq:first_apriori_5}
\| \tQ^{n}\|^2_{L^2(0,\tfinal;\Vs)} \le M_1(\|\tQ_{0}\|^2_{\Hs},\|\domcon\|^2_{L^2(\domcyl)}, \| \bdycon \|^2_{L^2(\bdycyl)}).
\end{equation}
\end{proposition}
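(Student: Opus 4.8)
The plan is to establish the bound by the standard energy method, testing the semi-discrete equation \cref{eq:weak_discrete} against the solution itself. Since $\tQ^{n}(s) \in \Vs_n$ for a.e.\ $s$, I may take $\tP = \tQ^{n}(s)$ in \cref{eq:weak_discrete}. Using that $\tQ^{n} \in H^1(0,\tfinal;\Vs_n)$, so that $\inner{\tQ^{n}_t(s)}{\tQ^{n}(s)}{\Hs} = \tfrac12 \tfrac{d}{ds}\|\tQ^{n}(s)\|^2_{\Hs}$ for a.e.\ $s$, and moving the nonlinear term to the right, this yields the energy identity
\begin{equation*}
\tfrac12 \tfrac{d}{ds}\|\tQ^{n}\|^2_{\Hs} + \|\nabla \tQ^{n}\|^2_{\Hs} + \anchcoef \|\tQ^{n}\|^2_{\HGs} = -\tfrac{1}{\bulketa^2}\int_{\Omega} \bulkfunc'(\tQ^{n})\dd \tQ^{n} + \domcoef \inner{\domcon}{\tQ^{n}}{\Hs} + \anchcoef \inner{\bdycon}{\tQ^{n}}{\HGs}.
\end{equation*}

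Next, I would bound each term on the right. For the nonlinear contribution I would invoke the linear-growth bound $|\bulkfunc'(\tQ)| \le c_1|\tQ|$ from \cref{eqn:mod_LdG_bulk_pot_bnds}, which gives $|\tfrac{1}{\bulketa^2}\int_{\Omega} \bulkfunc'(\tQ^{n})\dd \tQ^{n}| \le (c_1/\bulketa^2)\|\tQ^{n}\|^2_{\Hs}$; this is precisely where the modification of the bulk potential to quadratic growth pays off, since it converts the otherwise troublesome double-well nonlinearity into a term absorbable by Gronwall. (Alternatively, one could keep the implicit part $\bulkimp'$ on the left and exploit the coercivity $\bulkimp'(\tQ)\dd\tQ \ge \bulkbnd|\tQ|^2$ from \cref{eqn:bulkimp_lower_bnd}, but this is not needed for this estimate.) For the two control terms I would apply weighted Young's inequalities, namely $\domcoef\inner{\domcon}{\tQ^{n}}{\Hs} \le \tfrac{\domcoef}{2}\|\domcon\|^2_{\Hs} + \tfrac{\domcoef}{2}\|\tQ^{n}\|^2_{\Hs}$ and $\anchcoef\inner{\bdycon}{\tQ^{n}}{\HGs} \le \tfrac{\anchcoef}{2}\|\bdycon\|^2_{\HGs} + \tfrac{\anchcoef}{2}\|\tQ^{n}\|^2_{\HGs}$, where the latter weight is chosen so that the boundary term $\tfrac{\anchcoef}{2}\|\tQ^{n}\|^2_{\HGs}$ is absorbed by the corresponding nonnegative term on the left.

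Finally, I would integrate in time over $(0,t)$, discard the nonnegative gradient and boundary contributions on the left, and use stability of the projection, $\|\tQ^{n}_0\|_{\Hs} = \|\Pi_n \tQ_0\|_{\Hs} \le c\|\tQ_0\|_{\Hs}$, to control the initial data. Applying the classical Gronwall lemma to $s \mapsto \|\tQ^{n}(s)\|^2_{\Hs}$ then bounds $\sup_{t}\|\tQ^{n}(t)\|^2_{\Hs}$, and hence $\|\tQ^{n}\|^2_{L^2(0,\tfinal;\Hs)}$, by $c\,(\|\tQ_0\|^2_{\Hs} + \domcoef\|\domcon\|^2_{L^2(\domcyl)} + \anchcoef\|\bdycon\|^2_{L^2(\bdycyl)})$, which is exactly the form of $M_1$. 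Re-inserting this bound into the integrated inequality while this time retaining the gradient term controls $\int_0^{\tfinal}\|\nabla \tQ^{n}\|^2_{\Hs}\,ds$ by the same expression, and summing the two estimates gives the claimed bound on $\|\tQ^{n}\|^2_{L^2(0,\tfinal;\Vs)}$. The only point requiring genuine care, rather than routine computation, is the bookkeeping needed to confirm that the Gronwall constant and the resulting factor $e^{c\tfinal}$ depend only on the fixed quantities $\bulketa$, $\anchcoef$, $\domcoef$, $\tfinal$ and the universal constant $c_1$ from \cref{eqn:mod_LdG_bulk_pot_bnds}, and in particular are independent of $n$, the controls, and the input data, as the statement demands.
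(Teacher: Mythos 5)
Your argument is correct and is precisely the standard energy technique the paper invokes (it omits the details, stating only that the result "follows by similar energy techniques as in the proof of Theorem 3.2"): test with $\tQ^{n}$ itself, absorb the nonlinearity via the linear-growth bound $|\bulkfunc'(\tQ)| \le c_1|\tQ|$ from \cref{eqn:mod_LdG_bulk_pot_bnds}, apply weighted Young's inequalities and Gronwall, then re-insert to recover the gradient term. The only point worth recording is that the uniform-in-$n$ stability $\|\tQ^{n}_{0}\|_{\Hs} \le c\|\tQ_{0}\|_{\Hs}$ you use is consistent with the paper's choice $\tQ^{n}_{0,k} := (\tQ_{0},\tY_{k})_{\Hs}$, i.e.\ the $\Hs$-orthogonal projection, for which it holds with $c=1$.
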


\begin{proof}
The result follows by similar energy techniques as in the proof of
\cref{thm:soln_cont}; we omit the details.
%See \cref{app:prop.3.6}.
\end{proof}

%\subsubsection{Further a priori estimates and boundedness in $\mathcal{W}$}\label{sssec:sec_apriori}
By exploiting the Hilbert space structure and the nature of the finite dimensional inner products, we can again use standard derivation techniques to derive further energy estimates and ultimately prove that the sequence of finite dimensional solutions is bounded in $\mathcal{W}$.  To indicate the dependence of the bound on the controls, we define $M_2 : \R^4 \to \R$ by $M_2(x_1,x_2,x_3,x_4) = (1/2) \left(
\delta_1 x_1 + \delta_2 x_2 + \delta_3 x_3 + \delta_4 x_4 \right)$.  
The positive constants $\delta_1,\dots,\delta_4$ are arbitrary and can be adjusted as needed. Given input controls, we leave off the arguments and abbreviate both $M_1$ and $M_2$ by setting
\vspace{-0.2cm}
\[
\aligned
M_1 &\equiv 
M_1(\|\tQ_{0}\|^2_{\Hs},\|\domcon\|^2_{L^2(\domcyl)}, \| \bdycon \|^2_{L^2(\bdycyl)}),\\
M_2 &\equiv 
 M_2(
\| \bdycon(t) \|^2_{\HGs},
\| \bdycon(0)\|^2_{\HGs},
\|(\bdycon)_t \|^2_{L^2(\bdycyl_t)}, 
\| \domcon \|^2_{L^2(\domcyl_t)}
).
\endaligned
\]
\vspace{-0.3cm}
\begin{proposition}\label{prop:disc-w-estm}
Suppose that $\tQ_{0}, \domcon, \bdycon$ satisfy \cref{eq:minimal_input_reg}.  Then there exists an $M_0 \ge 0$ for all $n$ such that
\vspace{-0.2cm}
\begin{equation}\label{eq:M0-bound}
  \| \nabla \tQ^{n}_{0}\|^2_{\Hs} 
+
\frac{1}{\bulketa^2}\int_{\Omega} \bulkimp(\tQ^{n}_{0}) 
- \frac{1}{\bulketa^2}\int_{\Omega} \bulkexp(\tQ^{n}_{0}) 
+
\delta'_{2}\|\tQ^{n}_{0}\|^2_{\HGs}  
\le 
M_0,
\end{equation}
holds.  Up to rescaling by a generic constant, it also holds for all $n$ a.e.\ in $t$ that
\vspace{-0.2cm}
\begin{equation}\label{eq:sec_apriori_est_4-supp}
 \|\tQ^{n}_{t}\|^2_{L^2(\domcyl_{t})}
+ 
 \| \nabla \tQ^{n}(t)\|^2_{\Hs}
+
\|\tQ^{n}(t)\|^2_{\HGs}
\le
M_0 + M_1 + M_2.
%c'_y
%+  c'_u.
\end{equation}
Furthermore, up to rescaling by a generic constant, it holds for all $n$ a.e.\ in $t$ that
\vspace{-0.2cm}
\begin{equation}\label{eq:sec_apriori_est_6-supp}
\|\tQ^{n}_{t}\|^2_{L^2(\domcyl_{t})}
+ 
 \| \nabla \tQ^{n}(t)\|^2_{\Hs}
+
\|\tQ^{n}(t)\|^2_{\Hs}
\le  M_0 + M_1 + M_2.
\end{equation}
Finally, as a consequence of \cref{eq:sec_apriori_est_4-supp}, \cref{eq:sec_apriori_est_6-supp}, and \cref{eq:first_apriori_5}, 
the sequence of solutions $\left\{Q^n\right\}$, with $Q^n$ from \Cref{prop:disc-soln}, is bounded in $\mathcal{W}$.
\end{proposition}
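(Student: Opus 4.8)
The plan is to derive all three displayed estimates by testing the semi-discrete equation \cref{eq:weak_discrete} with the choice $\tP = \tQ^n_t(s) \in \Vs_n$, which is legitimate at a.e.\ $s$ because \cref{prop:disc-soln} furnishes $\tQ^n \in H^1(0,\tfinal;\Vs_n)$. Before that, I would dispatch the initial bound \cref{eq:M0-bound}. Since $\tQ^n_0 = \Pi_n \tQ_0$ and the projections $\Pi_n$ converge pointwise on $\Vs$, the uniform boundedness principle yields $\sup_n \norm{\tQ^n_0}_{\Vs} \le c\,\norm{\tQ_0}_{\Vs}$. Combining this with the trace theorem to control $\norm{\tQ^n_0}_{\HGs}$, and noting that the bulk contribution equals $\frac{1}{\bulketa^2}\int_\Om \bulkfunc(\tQ^n_0)$ with $|\bulkfunc(\tQ)| \le \bulkK + c_0|\tQ|^2$ from \cref{eqn:mod_LdG_bulk_pot_bnds}, every term on the left of \cref{eq:M0-bound} is controlled by $\norm{\tQ_0}_{\Vs}^2$ and the fixed problem constants, giving an $M_0$ independent of $n$.

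For the main estimate, testing with $\tQ^n_t$ turns the elastic, surface, and bulk terms into exact time derivatives,
\begin{equation*}
\inner{\nabla\tQ^n}{\nabla\tQ^n_t}{\Hs} = \tfrac12\tfrac{d}{dt}\norm{\nabla\tQ^n}_{\Hs}^2,\quad \anchcoef\inner{\tQ^n}{\tQ^n_t}{\HGs} = \tfrac{\anchcoef}{2}\tfrac{d}{dt}\norm{\tQ^n}_{\HGs}^2,\quad \tfrac{1}{\bulketa^2}\int_\Om \bulkfunc'(\tQ^n)\dd\tQ^n_t = \tfrac{1}{\bulketa^2}\tfrac{d}{dt}\int_\Om\bulkfunc(\tQ^n),
\end{equation*}
so the identity reads $\norm{\tQ^n_t}_{\Hs}^2 + \frac{d}{dt}\mathcal{E}^n = \domcoef\inner{\domcon}{\tQ^n_t}{\Hs} + \anchcoef\inner{\bdycon}{\tQ^n_t}{\HGs}$, where $\mathcal{E}^n$ collects the three energy quantities. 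Integrating over $(0,t)$, the value of $\mathcal{E}^n$ at $s=0$ is precisely the quantity bounded by $M_0$ in \cref{eq:M0-bound}.

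The crux is the boundary forcing $\anchcoef\inner{\bdycon}{\tQ^n_t}{\HGs}$: because \cref{eq:forward_problem} carries no diffusion on $\Gm$, the surface trace of $\tQ^n_t$ cannot be absorbed directly. This is exactly where the hypothesis $\bdycon \in H^1(0,\tfinal;\HGs)$ enters, and I would integrate by parts in time,
\begin{equation*}
\int_0^t \anchcoef\inner{\bdycon}{\tQ^n_t}{\HGs}\,ds = \anchcoef\inner{\bdycon(t)}{\tQ^n(t)}{\HGs} - \anchcoef\inner{\bdycon(0)}{\tQ^n_0}{\HGs} - \int_0^t \anchcoef\inner{(\bdycon)_t}{\tQ^n}{\HGs}\,ds,
\end{equation*}
which shifts the time derivative onto $\bdycon$, now an $L^2(0,\tfinal;\HGs)$ object. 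Weighted Young inequalities then absorb $\frac12\norm{\tQ^n_t}_{\Hs}^2$ and a fraction of $\norm{\tQ^n(t)}_{\HGs}^2$ into the left-hand side, while the nonnegative bulk term $\frac{1}{\bulketa^2}\int_\Om\bulkfunc(\tQ^n(t)) \ge 0$ is simply discarded. The surviving forcing terms assemble into the $M_1$ and $M_2$ combinations as defined, and the residual $\int_0^t\norm{\tQ^n(s)}_{\HGs}^2\,ds$ is handled by the classic Gronwall lemma, the factor $e^{c\tfinal}$ being absorbed into the generic constant; this yields \cref{eq:sec_apriori_est_4-supp}.

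Estimate \cref{eq:sec_apriori_est_6-supp} then follows from \cref{eq:sec_apriori_est_4-supp} by trading the surface norm for the bulk $L^2$ norm through the Poincaré-type trace inequality $\norm{\tP}_{\Hs}^2 \le c(\norm{\nabla\tP}_{\Hs}^2 + \norm{\tP}_{\HGs}^2)$, valid on the bounded Lipschitz domain $\Om$. Finally, boundedness of $\{\tQ^n\}$ in $\bigW$ is read off term by term: the uniform-in-$t$, uniform-in-$n$ control of $\norm{\nabla\tQ^n(t)}_{\Hs}^2 + \norm{\tQ^n(t)}_{\Hs}^2$ from \cref{eq:sec_apriori_est_6-supp} gives the $L^\infty(0,\tfinal;\Vs)$ bound; the control of $\norm{\tQ^n(t)}_{\HGs}^2$ from \cref{eq:sec_apriori_est_4-supp} gives the $L^\infty(0,\tfinal;\HGs)$ bound; taking $t=\tfinal$ in the $\norm{\tQ^n_t}_{L^2(\domcyl_t)}^2$ term (together with the embedding of the $L^\infty(0,\tfinal;\Hs)$ bound into $L^2$) gives the $H^1(0,\tfinal;\Hs)$ bound; and \cref{eq:first_apriori_5} of \cref{prop:enrgi-a} supplies the $L^2(0,\tfinal;\Vs)$ bound. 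Since the right-hand side $M_0 + M_1 + M_2$ depends only on the data and not on $n$, all bounds are uniform in $n$.
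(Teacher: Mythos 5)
Your proof is correct and follows essentially the same route as the paper: test with $\tP=\tQ^{n}_{t}$, integrate the boundary forcing by parts in time (which is exactly where $\bdycon\in H^1(0,\tfinal;\HGs)$ is used), apply weighted Young inequalities, discard the nonnegative bulk potential at time $t$, and use a Poincar\'e/trace-type inequality to convert the surface norm into the interior $L^2$ norm for \cref{eq:sec_apriori_est_6-supp}. The only minor deviation is that you close the leftover lower-order term $\int_0^t\|\tQ^{n}(s)\|^2_{\HGs}\,ds$ with Gronwall's lemma, whereas the paper bounds the corresponding term directly by the previously established estimate \cref{eq:first_apriori_5} (which is precisely how $M_1$ enters their right-hand side); both arguments are valid and give the stated bounds up to a generic constant.
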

\begin{proof}
%See the supplemental material \cref{sec:proof4}. 
See \cref{app:prop.3.7}.
\end{proof}

%\subsubsection{An improved Lipschitz estimate}\label{ssec:further_estm}
In order to obtain further properties of the control-to-state mapping, we need a stronger Lipschitz continuity result which we first state for the semi-discrete problem.
\begin{proposition}\label{prop:better-lip}
Suppose that $\tQ_{0,i}, \domconi{i}, \bdyconi{i}$, for $i = 1, 2$, satisfy \cref{eq:minimal_input_reg} and let $\tQ^{n}_{0,i} = \Pi \tQ_{0,i}$, where $\Pi$ is given in \cref{as:findim}.  
Then the corresponding solutions $Q^n_{i}$, for $i=1, 2$, satisfy the bound
\vspace{-0.3cm}
\begin{multline}\label{eq:local_Lip_W}
%\aligned
 \| \tQ^{n}_{1} - \tQ^{n}_{2} \|_{\bigW}
\leq\\ c \Big( 
\| \domconi{1} - \domconi{2} \|_{L^2(0,\tfinal;\Hs)}^2  +
 \| \bdyconi{1} - \bdyconi{2} \|_{H^1(0,\tfinal;\HGs)}^2  
+  \| \tQ^{n}_{0,1} -  \tQ^{n}_{0,2} \|^2_{\Vs}\Big)^{1/2},
\end{multline}
where $c > 0$ is a generic constant that does not depend on the controls, states, or $n$.
\end{proposition}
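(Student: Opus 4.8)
The plan is to argue entirely at the semi-discrete level, where finite-dimensionality of $\Vs_n$ permits using $W_t$ as a test function, and to split the work into a lower-order estimate followed by a higher-order one. Write $W := \tQ^n_1 - \tQ^n_2$ for the difference of the two discrete solutions, and set $W_0 := \tQ^n_{0,1} - \tQ^n_{0,2}$, $h := \domconi{1} - \domconi{2}$, and $g := \bdyconi{1} - \bdyconi{2}$. Subtracting the two copies of \cref{eq:weak_discrete} gives, for a.e.\ $s$ and all $\tP \in \Vs_n$,
\begin{equation*}
\inner{W_t}{\tP}{\Hs} + \inner{\nabla W}{\nabla \tP}{\Hs} + \anchcoef \inner{W}{\tP}{\HGs} + \frac{1}{\bulketa^2}\iO \big(\bulkfunc'(\tQ^n_1) - \bulkfunc'(\tQ^n_2)\big)\dd\tP = \domcoef\inner{h}{\tP}{\Hs} + \anchcoef\inner{g}{\tP}{\HGs}.
\end{equation*}

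First I would reproduce, at the discrete level, the estimate of \cref{thm:soln_cont}. Testing with $\tP = W \in \Vs_n$, the convex splitting \cref{eqn:bulkfunc_convex_split} renders the $\bulkimp'$ contribution nonnegative by monotonicity and leaves only the linearly growing $\bulkexp'$ term, bounded by $c\|W\|^2_{\Hs}$; weighted Young's inequalities and the classical Gronwall lemma then yield
\begin{equation*}
\|W\|^2_{C([0,\tfinal];\Hs)} + \|W\|^2_{L^2(0,\tfinal;\Vs)} \le c\big(\|W_0\|^2_{\Vs} + \|h\|^2_{L^2(0,\tfinal;\Hs)} + \|g\|^2_{H^1(0,\tfinal;\HGs)}\big) =: c\,D^2.
\end{equation*}
This already controls two of the four seminorms in $\bigW$ and, crucially, bounds $\int_0^t\|W\|^2_{\Hs}$ and, via the trace inequality $\|W\|_{\HGs}\le c\|W\|_{\Vs}$, the quantity $\int_0^t\|W\|^2_{\HGs}$ by $cD^2$.

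For the remaining seminorms I would test with $\tP = W_t$, which is admissible because $W \in H^1(0,\tfinal;\Vs_n)$ forces $W_t(s)\in\Vs_n$ for a.e.\ $s$. The first three terms become $\|W_t\|^2_{\Hs}$, $\tfrac{1}{2}\tfrac{d}{dt}\|\nabla W\|^2_{\Hs}$, and $\tfrac{\anchcoef}{2}\tfrac{d}{dt}\|W\|^2_{\HGs}$; integrating over $(0,t)$ places $\|\nabla W(t)\|^2_{\Hs}$ and $\|W(t)\|^2_{\HGs}$ on the left and produces a data term $c\|W_0\|^2_{\Vs}$ from the initial values. The nonlinear term is handled by the global Lipschitz bound $|\bulkfunc''|\le c_2$ of \cref{eqn:mod_LdG_bulk_pot_bnds}, giving $|\iO(\bulkfunc'(\tQ^n_1)-\bulkfunc'(\tQ^n_2))\dd W_t|\le c_2\|W\|_{\Hs}\|W_t\|_{\Hs}$; a Young split sends the $\|W_t\|^2_{\Hs}$ part to the left-hand side and leaves $\int_0^t\|W\|^2_{\Hs}$, which is already $\le cD^2$ by the lower-order estimate, so no further Gronwall step is needed. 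The force term $\domcoef\inner{h}{W_t}{\Hs}$ is treated identically.

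I expect the main obstacle to be the boundary term $\anchcoef\int_0^t\inner{g}{W_t}{\HGs}\ds$: because the Robin condition carries no surface diffusion, $W_t$ has no boundary regularity with which to absorb it directly. I would integrate by parts in time,
\begin{equation*}
\anchcoef\int_0^t\inner{g}{W_t}{\HGs}\ds = \anchcoef\inner{g(t)}{W(t)}{\HGs} - \anchcoef\inner{g(0)}{W_0}{\HGs} - \anchcoef\int_0^t\inner{g_t}{W}{\HGs}\ds,
\end{equation*}
which is precisely where the $H^1(0,\tfinal;\HGs)$ regularity of the boundary control becomes indispensable. The endpoint term is split by Young so that a small multiple of $\|W(t)\|^2_{\HGs}$ is absorbed on the left and the remainder is bounded by $\|g\|^2_{C([0,\tfinal];\HGs)}\le c\|g\|^2_{H^1(0,\tfinal;\HGs)}$ using $H^1(0,\tfinal;\HGs)\hookrightarrow C([0,\tfinal];\HGs)$; the term at $s=0$ is data; and $\int_0^t\inner{g_t}{W}{\HGs}\ds$ is bounded by $\|g_t\|^2_{L^2(\bdycyl)}$ plus $\int_0^t\|W\|^2_{\HGs}\le cD^2$. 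Collecting all contributions and taking the supremum over $t\in[0,\tfinal]$ bounds $\|W\|^2_{L^\infty(0,\tfinal;\Vs)}$ (combining $\|\nabla W(t)\|_{\Hs}$ with the $C([0,\tfinal];\Hs)$ bound of step one), $\|W\|^2_{L^\infty(0,\tfinal;\HGs)}$, and $\|W_t\|^2_{L^2(0,\tfinal;\Hs)}$ by $cD^2$; together with the $L^2(0,\tfinal;\Vs)$ bound, this is exactly \cref{eq:local_Lip_W}, with $c$ independent of $n$, the states, and the controls.
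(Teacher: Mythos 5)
Your proposal is correct and follows exactly the route the paper intends: the paper omits the details, saying only that the argument mirrors the proof of \cref{thm:soln_cont} with the Lipschitz continuity of $\bulkfunc'$ being essential, and your first step (testing with $W$, convex splitting, Young, Gronwall) is precisely that, while your second step (testing with $W_t$, integrating the boundary control term by parts in time to exploit $H^1(0,\tfinal;\HGs)$) reproduces the technique of the paper's own proof of \cref{prop:disc-w-estm} in \cref{app:prop.3.7}, which is indeed needed to control the remaining $L^\infty(0,\tfinal;\Vs)$, $L^\infty(0,\tfinal;\HGs)$, and $H^1(0,\tfinal;\Hs)$ components of the $\bigW$-norm. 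You have correctly supplied the one piece the paper's one-line proof glosses over, and your identification of the admissibility of $W_t$ as a test function at the semi-discrete level and of the missing surface diffusion as the reason for the time integration by parts are both on point.
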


\begin{proof}
%See \cref{app:prop.3.8}.
The proof is straightforward and directly mirrors the proof of \cref{thm:soln_cont}.  In particular, the Lipschitz continuity of the gradient of the bulk energy term is essential.  We omit the details.
\end{proof}

\subsubsection{Passage to the limit}\label{sssec:limit}
In light of the uniform bounds and energy estimates on $\left\{\tQ^{n}\right\}$ provided above, we can now prove the existence of a solution. 
%The proof also follows the scalar case and can be found in the supplemental material.

\begin{theorem}\label{thm:exist_state}
For every triple $(\tQ_{0}, \domcon, \bdycon)$ that satisfies \cref{eq:minimal_input_reg},
%$(\domcon,\bdycon) \in L^2(\domcyl) \times H^1(0,\tfinal;\HGs)$, 
there exists a unique solution $\bar{\tQ} \in \bigW$ of the weak form \cref{eq:weak_solution} and a $\rho > 0$ such that
\vspace{-0.2cm}
\begin{equation}\label{eq:asym_5}
\| \bar{\tQ} \|_{\bigW} \le \rho.
\end{equation}
\end{theorem}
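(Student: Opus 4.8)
The plan is to complete the Faedo--Galerkin scheme by passing to the limit in the semi-discrete problem \cref{eq:weak_discrete}, using the uniform bounds of \cref{prop:disc-w-estm}; uniqueness is then immediate. Indeed, if $\bar{\tQ}_1, \bar{\tQ}_2 \in \bigW$ are two solutions for the same data, then \cref{thm:soln_cont} applied with $\domconi{1}=\domconi{2}$, $\bdyconi{1}=\bdyconi{2}$, and $\tQ_{0,1}=\tQ_{0,2}$ forces the right-hand side of \cref{eq:continuity_bound} to vanish, whence $\bar{\tQ}_1 = \bar{\tQ}_2$. Thus it suffices to construct one solution and verify the bound \cref{eq:asym_5}.

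First I would extract convergent subsequences. By \cref{prop:disc-w-estm} the sequence $\{\tQ^n\}$ is bounded in $\bigW$, hence in each of $L^\infty(0,\tfinal;\Vs)$, $L^\infty(0,\tfinal;\HGs)$, $H^1(0,\tfinal;\Hs)$, and $L^2(0,\tfinal;\Vs)$. Passing to a subsequence (not relabeled) via Banach--Alaoglu in the $L^\infty$ spaces and weak compactness in the reflexive Hilbert spaces, there is a common limit $\bar{\tQ}$ with $\tQ^n \overset{*}{\rightharpoonup} \bar{\tQ}$ in $L^\infty(0,\tfinal;\Vs)$, $\tQ^n_t \rightharpoonup \bar{\tQ}_t$ in $L^2(0,\tfinal;\Hs)$, and $\tQ^n \rightharpoonup \bar{\tQ}$ in $L^2(0,\tfinal;\Vs)$; in particular $\bar{\tQ} \in \bigW$.

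The crux is the nonlinear term $\bulkfunc'(\tQ^n)$, for which weak convergence is insufficient. Here I would invoke the Aubin--Lions--Simon compactness lemma: since $\{\tQ^n\}$ is bounded in $L^\infty(0,\tfinal;\Vs)$ with $\{\tQ^n_t\}$ bounded in $L^2(0,\tfinal;\Hs)$, and the embedding $\Vs \hookrightarrow\hookrightarrow \Hs$ is compact, the sequence is relatively compact in $C([0,\tfinal];\Hs)$. Passing to a further subsequence gives $\tQ^n \to \bar{\tQ}$ strongly in $C([0,\tfinal];\Hs)$, hence in $L^2(\domcyl)$ and a.e.\ in $\domcyl$. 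Because $\bulkfunc'$ is globally Lipschitz (as $|\bulkfunc''| \le c_2$ by \cref{eqn:mod_LdG_bulk_pot_bnds}), one has $\| \bulkfunc'(\tQ^n) - \bulkfunc'(\bar{\tQ}) \|_{L^2(\domcyl)} \le c_2 \| \tQ^n - \bar{\tQ} \|_{L^2(\domcyl)} \to 0$, so the bulk term converges strongly.

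With these convergences in hand, I would pass to the limit in \cref{eq:weak_discrete}. Fixing $m$ and taking a test function $\tP(s) = \phi(s)\tY_k$ with $\tY_k \in \Vs_m$ and $\phi \in C^1([0,\tfinal])$, I would multiply \cref{eq:weak_discrete} by $\phi$ and integrate over $(0,\tfinal)$: for $n \ge m$ the time-derivative and gradient terms converge by the weak convergences above, the boundary term by continuity of the trace $\Vs \to \HGs$ (which sends the weak limit in $L^2(0,\tfinal;\Vs)$ to the weak trace limit in $L^2(0,\tfinal;\HGs)$), and the bulk term by the strong convergence just established. Using the density of $\bigcup_m \Vs_m$ in $\Vs$ guaranteed by \cref{as:findim} and the arbitrariness of $\phi$, I recover the pointwise-in-time identity \cref{eq:weak_solution} for all $\tP \in \Vs$ and a.e.\ $s$. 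The initial condition follows from the $C([0,\tfinal];\Hs)$ convergence: $\tQ^n(0) = \Pi_n \tQ_0 \to \tQ_0$ in $\Hs$ by \cref{as:findim}, while $\tQ^n(0) \to \bar{\tQ}(0)$, so $\bar{\tQ}(0)=\tQ_0$. Finally, \cref{eq:asym_5} follows by weak-$*$ and weak lower semicontinuity of the norm in each component space of $\bigW$ applied to the uniform bound of \cref{prop:disc-w-estm}, and the uniqueness established above promotes the subsequential limit to a full limit. The main obstacle is the nonlinear term together with the recovery of the initial data: both rely on upgrading weak to strong convergence through the compactness lemma.
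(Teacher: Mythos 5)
Your proposal is correct and follows essentially the same route as the paper: uniform bounds from \cref{prop:disc-w-estm}, weak/weak-$*$ compactness plus the Aubin--Lions--Simon lemma to upgrade to strong convergence, the global Lipschitz continuity of $\bulkfunc'$ to pass to the limit in the nonlinear term, and \cref{thm:soln_cont} for uniqueness. The only (immaterial) difference is that you take the strong convergence in $C([0,\tfinal];\Hs)$ and pair $\bulkfunc'(\tQ^n)$ with test functions in $L^2(\domcyl)$, whereas the paper works in $C([0,\tfinal];L^{6-\varepsilon}(\Omega,\symmtraceless))$ and uses an $L^{7/4}$--$L^{7/3}$ pairing; both suffice because $\bulkfunc'$ has linear growth.
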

\begin{proof}
See \cref{app:thm.3.9}.
%See \cref{sec:proof6}.
\end{proof}

\begin{remark}\label{rem:unif_in_ctrls}
Note that despite the fact it depends on $\domcon$ and $\bdycon$, the constant $\rho$ in  \cref{eq:asym_5} can be bounded from above by a uniform constant, which is independent of $\domcon$ and $\bdycon$, provided the latter two are taken over a bounded set in the space $L^2(\domcyl) \times H^1(0,\tfinal;\HGs)$. This is a direct consequence of the structure of the a priori estimates.%; see supplemental material.
\end{remark}

The previous results also allow us to pass to the limit along a subsequence to obtain the following global Lipschitz bound from \cref{eq:local_Lip_W}:
\vspace{-0.2cm}
\begin{multline}\label{eq:lim_local_Lip_W}
 \| \tQ_{1} - \tQ_{2} \|_{\bigW} 
\leq\\ c \Big( 
\| \domconi{1} - \domconi{2} \|_{L^2(0,\tfinal;\Hs)}^2  +
 \| \bdyconi{1} - \bdyconi{2} \|_{H^1(0,\tfinal;\HGs)}^2  
+  \| \tQ_{0,1} -  \tQ_{0,2} \|^2_{\Vs}\Big)^{1/2},
\end{multline}
provided the controls are feasible.

The final result in this section involves the continuity of the states on the full space-time cylinder. In contrast to the results above, our nonlinear system in tensor-valued variables  inhibits a direct application of the standard techniques as can be found, e.g., in the relevant chapters in \cite{Troltzsch_book2010}. We require a few additional steps, which we provide here. The remainder can be found in the appendix.  %supplemental material.  
Note that the following argument is unique to the one elastic constant case. For more general elastic energy densities, we require new techniques to derive such continuity results in future studies.

To begin, given the existence of a solution $\bar{\tQ}$ in $\bigW$, we have $\bar{\tQ} \in L^{6}(\domcyl)$, which follows from the Sobolev embedding theorem and the fact that $\tfinal < +\infty$. Therefore, $\bar{\tQ}$ is the unique solution of the system of linear parabolic equations given by
\vspace{-0.3cm}
\begin{equation}\label{eq:forward_problem_bootstrap_1a}
\begin{split}
\widehat{\tQ}_t - \Delta \widehat{\tQ} 
&= \domcoef \domcon - \frac{1}{\bulketa^2} \psi'(\bar{\tQ}), \text{ in } \Om, \quad 
\partial_{\vnu} \widehat{\tQ} + \anchcoef \widehat{\tQ} = 
\anchcoef \bdycon,  \text{ on } \Gm,
\end{split}
\end{equation}
with $\widehat{\tQ}(\cdot,0) = \tQ_{0}$ in $\Om$.  
Next, we use the fact that  there exists a set of five symmetric, traceless, $3 \times 3$ orthonormal matrices $\{ \basis^{i} \}_{i=1}^5 \subset \R^{3 \times 3}$ such that every $\tQ \in \Vs$ admits the representation
$
\tQ = q_i \basis^{i}
$
where $q_i \in H^1(\Omega ; \R)$ for $i =1,\dots,5$. Let $\bar{q}_i$ denote the scalar-valued functions for the solution $\bar{\tQ}$. 

This decomposition was also exploited in \cite{Davis_SJNA1998}. It provides us with an isometric isomorphism between $\Vs$ and $H^1(\Omega ; \mathbb R^5)$ and allows us to split the tensor-based problem into five separate scalar parabolic equations with Robin boundary conditions.  In addition, we note that the second bound in \cref{eqn:mod_LdG_bulk_pot_bnds} implies that $\psi'(\bar\tQ)$ is also in $L^6(\domcyl)$.

Now that we can separate the system into independent scalar-valued equations, and apply the well-known regularity theory to obtain continuity of $\bar\tQ$. To be clear, we obtain continuity of each $\bar{q}_i$ via e.g., \cite[Thm. 5.5]{Troltzsch_book2010}, and consequently of $\bar{\tQ}$. The remainder of the proof is concerned with removing the dependency on $\bar{\tQ}$ from the upper bound. Since this does not require any special techniques for tensor-valued solutions, we have placed it in \cref{app:thm.3.11}.
% supplemental material.

\begin{theorem}\label{thm:higher_reg_thm}
If, in addition to \cref{eq:minimal_input_reg}, we have $\domcon \in L^{r}(\domcyl)$, $\bdycon \in L^{s}(\bdycyl)$, $\tQ_{0} \in C(\overline{\Omega})$, 
%\[
%\domcon \in L^{r}(\domcyl) \quad \bdycon \in L^{s}(\bdycyl) \quad \tQ_{0} \in C(\overline{\Omega})
%\]
with $r \in (5/2,6]$ and $s > 4$, then $\bar{\tQ} \in C(\overline{\domcyl})$.  
Next, let
\vspace{-0.1cm}
\begin{equation}\label{eq:def_UUg}
\conreg := [L^2(\domcyl) \times H^1(0,\tfinal;\HGs)] \cap [L^{r}(\domcyl) \times L^{s}(\bdycyl)],
\end{equation}
endowed with the natural norm
\vspace{-0.2cm}
\begin{equation}\label{eq:def_norm_UUg}
\| (\domcon,\bdycon) \|_{\conreg} = 
\max\{ 
\| (\domcon,\bdycon) \|_{L^2(\domcyl) \times H^1(0,\tfinal;\HGs)},
\| (\domcon,\bdycon) \|_{L^{r}(\domcyl) \times L^{s}(\bdycyl)}
\}.
\end{equation}
Then there exists a constant $c > 0$ independent of $\bar{\tQ}$, $(\domcon,\bdycon) \in \conreg$, and a constant $M_{0}$ such that
\vspace{-0.3cm} 
\begin{equation}\label{eq:unif_reg_bd}
 \| \bar{\tQ} \|_{C(\overline{\domcyl})} \le c\left(\sqrt{M_0} 
 + 
 \| (\domcon,\bdycon) \|_{\conreg}
+
\| \tQ_{0} \|_{C(\overline{\Omega})}
\right).
\end{equation}
\end{theorem}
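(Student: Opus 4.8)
The plan is to freeze the bulk nonlinearity so that $\bar{\tQ}$ solves a \emph{linear} parabolic system, and then to invoke the classical $L^\infty$/continuity theory for scalar linear parabolic equations componentwise. Following the reduction already indicated before the statement, I would write $\bar{\tQ} = \bar{q}_i \basis^i$ in the orthonormal basis of $\symmtraceless$ and regard \cref{eq:forward_problem_bootstrap_1a} as five decoupled scalar equations for $\bar{q}_i$, each with source $f_i := \domcoef (\domcon)_i - \bulketa^{-2}(\bulkfunc'(\bar{\tQ}))_i$ in the bulk and Robin data $\anchcoef(\bdycon)_i$ on $\Gm$, with continuous initial datum $(\tQ_0)_i$. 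The crucial preliminary observation is that the frozen source has the right integrability: since $\bar{\tQ}\in\bigW \hookrightarrow L^\infty(0,\tfinal;\Vs) \hookrightarrow L^6(\domcyl)$ (Sobolev in $d=3$ together with $\tfinal<\infty$) and $|\bulkfunc'(\tQ)|\le c_1|\tQ|$ by \cref{eqn:mod_LdG_bulk_pot_bnds}, we get $\bulkfunc'(\bar{\tQ})\in L^6(\domcyl) \hookrightarrow L^r(\domcyl)$ for every $r\le 6$; combined with $\domcon\in L^r(\domcyl)$ this gives $f_i\in L^r(\domcyl)$. The upper bound $r\le 6$ in the hypotheses is precisely what makes the nonlinear term compatible with the available regularity of $\bar{\tQ}$.

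With the source in $L^r(\domcyl)$, $r>5/2$, the boundary data in $L^s(\bdycyl)$, $s>4$, and $\tQ_0\in C(\overline{\Omega})$, the thresholds $r>(d+2)/2=5/2$ and $s>d+1=4$ for $d=3$ are exactly those required by the scalar maximal-regularity/continuity result, e.g.\ \cite[Thm.\ 5.5]{Troltzsch_book2010}. Applying it to each component yields $\bar{q}_i\in C(\overline{\domcyl})$, hence $\bar{\tQ}\in C(\overline{\domcyl})$, together with a bound of the form $\|\bar{q}_i\|_{C(\overline{\domcyl})}\le c(\|f_i\|_{L^r(\domcyl)}+\anchcoef\|(\bdycon)_i\|_{L^s(\bdycyl)}+\|(\tQ_0)_i\|_{C(\overline{\Omega})})$. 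Summing over $i$ and using the equivalence of the componentwise and tensor norms proves continuity and gives a preliminary estimate for $\|\bar{\tQ}\|_{C(\overline{\domcyl})}$ that still contains $\|\bulkfunc'(\bar{\tQ})\|_{L^r(\domcyl)}$ on the right-hand side.

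The remaining (and only nontrivial) task is to eliminate this residual dependence on the state, i.e.\ to turn the preliminary estimate into \cref{eq:unif_reg_bd}. For this I would again use $\|\bulkfunc'(\bar{\tQ})\|_{L^r(\domcyl)}\le c_1\|\bar{\tQ}\|_{L^r(\domcyl)}\le c\|\bar{\tQ}\|_{L^6(\domcyl)}\le c\|\bar{\tQ}\|_{\bigW}$, and then the uniform a priori bound obtained in \cref{prop:disc-w-estm} and \cref{thm:exist_state}: passing to the limit, $\|\bar{\tQ}\|_{\bigW}^2\le c(M_0+M_1+M_2)$, so that $\|\bar{\tQ}\|_{\bigW}\le c(\sqrt{M_0}+\|(\domcon,\bdycon)\|_{L^2(\domcyl)\times H^1(0,\tfinal;\HGs)})$ after taking square roots and using $\sqrt{a+b}\le\sqrt a+\sqrt b$ (the $M_1,M_2$ being linear in the squared $L^2\times H^1$ norms of the controls, and the initial-data contribution absorbed into $\sqrt{M_0}$ and $\|\tQ_0\|_{C(\overline{\Omega})}$). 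Finally I would observe that the norm $\|(\domcon,\bdycon)\|_{\conreg}$ of \cref{eq:def_norm_UUg} simultaneously dominates $\|\domcon\|_{L^r(\domcyl)}$, $\|\bdycon\|_{L^s(\bdycyl)}$, and $\|(\domcon,\bdycon)\|_{L^2(\domcyl)\times H^1(0,\tfinal;\HGs)}$; collecting all contributions yields \cref{eq:unif_reg_bd}.

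The main obstacle lies in the last step: the continuity itself is an essentially routine componentwise application of known linear theory, but making the constant genuinely independent of $\bar{\tQ}$ requires carefully chaining the linear-growth bound on $\bulkfunc'$, the embedding $\bigW\hookrightarrow L^6(\domcyl)$, and the uniform $\bigW$-estimate (cf.\ \cref{rem:unif_in_ctrls}), while verifying that every exponent produced along the way — $L^r$ in the bulk, $L^s$ on the boundary, and $L^2\times H^1$ from the energy estimate — is controlled by the single norm $\|\cdot\|_{\conreg}$.
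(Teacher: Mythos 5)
Your proposal is correct and follows essentially the same route as the paper: the componentwise reduction via the orthonormal basis of $\symmtraceless$, the application of \cite[Thm.~5.5]{Troltzsch_book2010} to the frozen linear system with source in $L^r(\domcyl)$ and Robin data in $L^s(\bdycyl)$, and the elimination of the residual $\|\bulkfunc'(\bar{\tQ})\|_{L^r(\domcyl)}$ term via the linear-growth bound \cref{eqn:mod_LdG_bulk_pot_bnds}, the embedding of $L^{\infty}(0,\tfinal;\Vs)$ into $L^r(\domcyl)$, and the subadditivity of the square root applied to the a priori bound $M_0+M_1+M_2$ from \cref{prop:disc-w-estm}. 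The only cosmetic difference is that you route the state bound through $\|\bar{\tQ}\|_{\bigW}$ rather than directly through $\|\bar{\tQ}\|_{L^{\infty}(0,\tfinal;\Vs)}$, which is equivalent.
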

\begin{remark}
If $\tQ_{0} \equiv 0$, then $M_0 = 0$. Moreover, since $c$ is independent of $(\domcon,\bdycon)$, we can vary on a ball in $\conreg$ and obtain a uniform bound on the solution operator $S(\domcon,\bdycon)$ in \cref{eq:solution_operator} below as a mapping from $\conreg$ into $C(\overline{\domcyl})$.
\end{remark}
\begin{proof}
See \cref{app:thm.3.11}.
%The full proof can be found in the supplemental material \cref{sec:proof7}.
\end{proof}

{\color{black}
\begin{remark}
In optimal control, proving the state variable is continuous on $\overline{\domcyl}$ is often useful for the derivation of optimality conditions for zero-order bound constraints, as it provides an essential constraint qualification in the convex setting. However, a constraint of the type ``$\tQ \ge 0$'' is not interesting for the current application. Nevertheless, the continuity of $\bar{\tQ}$ provides a justification for constraints of the type ``$|\bar{\tQ}| = 0$'' on lower-dimensional manifolds embedded in $\Omega$, which would correspond to the placement of defects. The analysis of this challenging type of constraint will be part of future research.
\end{remark}
}

{\color{black}
\begin{remark}
The energy estimates and related bounds derived in this section (and the associated appendices) can be useful for future work on the a priori numerical analysis of the optimal control problem as they would remain true if we replace the controls $\domcon$ and $\bdycon$ by, e.g., finite element approximations. At several points, we adjust the coefficients in $M_0$, $M_1$, $M_2$, the generic constant $c$, and $\rho$ in \eqref{eq:asym_5}  used throughout the text. Nevertheless, these arguments should be largely unaffected by the usage of discrete controls and ultimately  stable bounds.
\end{remark}
}
\section{Existence of optimal controls}\label{sec:exist_ctrl}

We denote the control-to-state operator for the forward problem \cref{eq:weak_solution} by
\vspace{-0.2cm}
\begin{equation}\label{eq:solution_operator}
	S : L^2(\domcyl) \times H^1(0,\tfinal;\HGs) \to \bigW,
\end{equation}
i.e. $S(\domcon,\bdycon) \in \bigW$ solves \cref{eq:weak_solution} for any controls $(\domcon,\bdycon) \in L^2(\domcyl) \times H^1(0,\tfinal;\HGs)$.

\begin{theorem}\label{thm:exst_ctrl}
Suppose that the set of control constraints  
$\conadmis$ is a nonempty, closed, and convex subset of  $L^2(\domcyl) \times H^1(0,\tfinal;\HGs)$. Then the optimal control problem \cref{eq:optconprob}-\cref{eq:con_constr} admits a solution.
\end{theorem}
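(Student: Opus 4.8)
The plan is to use the direct method of the calculus of variations. First I would establish that the reduced objective $\reducedobj(\domcon,\bdycon) := \costobj(S(\domcon,\bdycon),\domcon,\bdycon)$ is bounded below on $\conadmis$, which is immediate since each summand in \cref{eq:optconprob} is nonnegative. Hence the infimum $m := \inf_{(\domcon,\bdycon) \in \conadmis} \reducedobj(\domcon,\bdycon)$ is finite, and I would extract a minimizing sequence $(\domconi{k},\bdyconi{k}) \in \conadmis$ with $\reducedobj(\domconi{k},\bdyconi{k}) \to m$, writing $\tQ_k := S(\domconi{k},\bdyconi{k})$ for the associated states.

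Next I would extract weakly convergent subsequences. Because $\alpha_{\domcyl}, \alpha_{\bdycyl} > 0$, the penalty terms force $(\domconi{k})$ to be bounded in $L^2(\domcyl)$ and $(\bdyconi{k})$ to be bounded in $H^1(0,\tfinal;\HGs)$; these are Hilbert spaces, so after passing to a subsequence (not relabeled) we obtain $\domconi{k} \rightharpoonup \bar{\domcon}$ in $L^2(\domcyl)$ and $\bdyconi{k} \rightharpoonup \bar{\bdycon}$ in $H^1(0,\tfinal;\HGs)$. Since $\conadmis$ is convex and closed, it is weakly sequentially closed (Mazur's lemma), so the weak limit $(\bar{\domcon},\bar{\bdycon})$ lies in $\conadmis$. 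By \cref{rem:unif_in_ctrls}, the bounded control sequence produces states $(\tQ_k)$ that are uniformly bounded in $\bigW$; hence, passing to a further subsequence, $\tQ_k \rightharpoonup \bar{\tQ}$ weakly in the reflexive components of $\bigW$ (in particular weakly in $L^2(0,\tfinal;\Vs)$ and weak-$*$ in the $L^\infty$ components, with $(\tQ_k)_t$ bounded in $L^2(0,\tfinal;\Hs)$).

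The crucial step is to identify the limit: I must show $\bar{\tQ} = S(\bar{\domcon},\bar{\bdycon})$, i.e.\ that $\bar{\tQ}$ solves the weak form \cref{eq:weak_solution} with the limiting controls. The linear terms pass to the limit directly under weak convergence, since they are continuous linear functionals of $\tQ$ and of the controls. The obstacle—and the part I expect to require the most care—is the nonlinear term $\frac{1}{\bulketa^2}\inner{\bulkfunc'(\tQ_k(s))}{\tP(s)}{\Hs}$, since weak convergence alone does not commute with the nonlinearity $\bulkfunc'$. To handle this I would invoke the Aubin--Lions--Simon compactness theorem: the bound on $(\tQ_k)$ in $L^2(0,\tfinal;\Vs)$ together with the bound on $((\tQ_k)_t)$ in $L^2(0,\tfinal;\Hs)$ yields strong convergence $\tQ_k \to \bar{\tQ}$ in $L^2(0,\tfinal;\Hs) = L^2(\domcyl)$ (the compact embedding into $C([0,\tfinal];\Hs)$ noted after \cref{thm:soln_cont} gives the same conclusion). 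Using the global Lipschitz bound $|\bulkfunc''| \le c_2$ from \cref{eqn:mod_LdG_bulk_pot_bnds}, $\bulkfunc'$ is globally Lipschitz, so $\bulkfunc'(\tQ_k) \to \bulkfunc'(\bar{\tQ})$ strongly in $L^2(\domcyl)$, which lets me pass to the limit in the nonlinear term. Therefore $\bar{\tQ} = S(\bar{\domcon},\bar{\bdycon})$.

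Finally I would conclude by weak lower semicontinuity of the objective. Each tracking term and each control-penalty term in \cref{eq:optconprob} is a continuous convex functional of its argument (norms squared on Hilbert spaces), hence weakly lower semicontinuous; the final-time term $\frac{\beta_{\tfinal}}{2}\|\tQ(\cdot,\tfinal) - \fintarget\|^2_{\Hs}$ is handled using the convergence $\tQ_k(\cdot,\tfinal) \rightharpoonup \bar{\tQ}(\cdot,\tfinal)$ in $\Hs$ afforded by the $C([0,\tfinal];\Hs)$ embedding. Combining these gives
\[
\reducedobj(\bar{\domcon},\bar{\bdycon}) = \costobj(\bar{\tQ},\bar{\domcon},\bar{\bdycon}) \le \liminf_{k\to\infty} \costobj(\tQ_k,\domconi{k},\bdyconi{k}) = m,
\]
and since $(\bar{\domcon},\bar{\bdycon}) \in \conadmis$ we have $\reducedobj(\bar{\domcon},\bar{\bdycon}) \ge m$, so equality holds and $(\bar{\domcon},\bar{\bdycon})$ is optimal. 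This completes the proof.
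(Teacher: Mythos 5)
Your proposal is correct and follows essentially the same route as the paper's proof: boundedness of the minimizing sequence from the $\alpha$-penalty terms, weak closedness of the convex admissible set, the uniform state bound from \cref{rem:unif_in_ctrls}, Aubin--Lions compactness to obtain strong convergence and pass to the limit in the nonlinear term via the global Lipschitz continuity of $\bulkfunc'$, and weak lower semicontinuity of the objective. The only cosmetic difference is that you use strong convergence in $L^2(\domcyl)$ where the paper works in $C([0,\tfinal];L^{6-\varepsilon}(\Omega;\symmtraceless))$; both suffice.
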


\begin{remark}
The assumption that $\conadmis \cap [L^2(\domcyl) \times H^1(0,\tfinal;\HGs)]$ is closed can be guaranteed in a variety of contexts, e.g. for pointwise a.e.\ bound constraints. Moreover, if the boundary control is independent of time or only applied at a \emph{finite} number of points in time, as in many applications, then this assumption is fulfilled.
\end{remark}

\begin{proof}
For readability, we set 
$\tilconadmis := \conadmis \cap [L^2(\domcyl) \times H^1(0,\tfinal;\HGs)]$.  
By assumption, this set is nonempty, closed, and convex and therefore weakly closed in $L^2(\domcyl) \times H^1(0,\tfinal;\HGs)$.  
In addition, we restrict the control-to-state operator $S(\domcon,\bdycon)$ to $\tilconadmis$.

By hypothesis, $\tilconadmis  \neq \emptyset$. Consequently, there exists a minimizing sequence $\left\{(\domconi{n},\bdyconi{n})\right\} \subset\tilconadmis $ for \cref{eq:optconprob}-\cref{eq:con_constr}.  Clearly,  $\left\{(\domconi{n},\bdyconi{n})\right\}$ is uniformly bounded in $L^{2}(\domcyl) \times L^{2}(\bdycyl)$. Moreover, since $\left\{(\domconi{n},\bdyconi{n})\right\}$ is a minimizing sequence, there exists an $n_0 \in \mathbb N$ such that for all $n \ge n_0$:
\vspace{-0.3cm}
\begin{multline*}
\left\{(\domconi{n},\bdyconi{n})\right\} \subset \\
\left\{ 
(\domcon,\bdycon) \in \conadmis 
\left| 
\costobj(S(\domcon,\bdycon),\domcon,\bdycon) 
\le 
\costobj(S(\domconi{n_0},\bdyconi{n_0}),\domconi{n_0},\bdyconi{n_0})\right.\right\}.
\end{multline*}
By the definition of $\costobj$, there is a constant $c > 0$, such that $\| \bdyconi{n} \|_{H^1(0,\tfinal;\HGs)} \le c$, for all $n \ge 1$.  
It follows that there exists a subsequence  $\left\{(\domconi{n_{k}},\bdyconi{n_{k}})\right\} $ and (weak) limit point $(\bar{\domcon},\barbdycon) \in \tilconadmis$. 
%More precisely, we have $\domcon_{n_k} \to \bar{\domcon}$ weakly-$*$ in $L^{\infty}(\domcyl)$ and $\bdyconi{n_k} \to \barbdycon$ weakly-$*$ in $L^{\infty}(\bdycyl)$ and weakly in $H^1(0,\tfinal;\HGs)$.
Finally, it follows in light of \Cref{thm:exist_state}, \Cref{rem:unif_in_ctrls}, equation \cref{eq:asym_5}, and the Aubin-Lions-Lemma that there exists a subsequence $\{\tQ^{l}\}$ with $\tQ^{l} := S(\domconi{n_{k_{l}}},\bdyconi{n_{k_{l}}})$ that converges to $\bar{\tQ}$ such that
\begin{itemize}
\item $\tQ^{l} \to \bar{\tQ}$ weakly$^*$ in $L^{\infty}(0,\tfinal;\Vs)$,
\item $\tQ^{l} \to \bar{\tQ}$ weakly in $L^{2}(0,\tfinal;\Vs)$,
\item $\tQ^{l} \to \bar{\tQ}$ weakly$^*$ in $L^{\infty}(0,\tfinal;\HGs)$,
\item $\tQ^{l} \to \bar{\tQ}$ weakly in $H^1(0,\tfinal;\Hs)$,
\item $\tQ^{l} \to \bar{\tQ}$ strongly in $C([0,\tfinal];L^{6-\varepsilon}(\Omega,\symmtraceless))$ for $\varepsilon \in (0,5]$.
\end{itemize}
Using analogous arguments to those in \cref{app:thm.3.9},
%\cref{sssec:limit}, 
it follows that $\bar{\tQ} = S(\bar{\domcon},\barbdycon)$. Finally, the weak lower-semicontinuity of $\costobj$ along with the properties of $\tQ^{l}$ and $(\domconi{n_{k_{l}}},\bdyconi{n_{k_{l}}})$ guarantee that $(\bar{\domcon},\barbdycon)$ is an optimal solution of \cref{eq:optconprob}-\cref{eq:con_constr}. 
\end{proof}

\section{First-Order Optimality Conditions and the Adjoint Equation}\label{sec:first_order}
%\subsection{Differential Sensitivity of $S$}
We first derive a differentiability result for $S$.
\begin{theorem}\label{thm:deriv_cts}
%In addition to \cref{eq:minimal_input_reg}, suppose that $\tQ_{0} \in \Vs \cap C(\overline{\Omega})$. 
Under the hypotheses of Theorem \ref{thm:higher_reg_thm},
the control-to-state mapping $S$ from $\conreg$ into $\bigW$ is Fr\'echet differentiable.  Moreover, given $(\domcon,\bdycon)$, $(\tH_{\Om},\tH_{\Gm})$ in $\conreg$, the derivative of $S$ at $(\domcon,\bdycon)$ in direction $(\tH_{\Om},\tH_{\Gm})$ is given by the unique solution 
$
\tXi = S'_{\domcon,\bdycon}(\tH_{\Om},\tH_{\Gm})
$ of \cref{eq:forward_problem_deriv}.
\end{theorem}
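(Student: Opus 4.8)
The plan is to identify the candidate derivative as the solution operator of the \emph{linearized} forward problem and then quantify the first-order Taylor remainder of $S$. Given the base point $\tQ = S(\domcon,\bdycon)$, the natural candidate for $S'_{\domcon,\bdycon}(\tH_{\Om},\tH_{\Gm})$ is the solution $\tXi$ of the linear parabolic system obtained by linearizing \cref{eq:forward_problem} about $\tQ$, namely \cref{eq:forward_problem_deriv},
\begin{equation*}
\tXi_t - \Delta \tXi + \frac{1}{\bulketa^2}\tracelessop{\bulkfunc''(\tQ)\dd\tXi} = \domcoef \tH_{\Om}, \quad \partial_{\vnu} \tXi + \anchcoef \tXi = \anchcoef \tH_{\Gm}, \quad \tXi(\cdot,0) = 0.
\end{equation*}
First I would establish that this system has a unique weak solution $\tXi \in \bigW$ and that the induced map $(\tH_{\Om},\tH_{\Gm}) \mapsto \tXi$ is a bounded linear operator from $\conreg$ into $\bigW$. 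This is a linear parabolic problem with bounded, measurable coefficient $\bulkfunc''(\tQ)$ (recall $|\bulkfunc''| \le c_2$ from \cref{eqn:mod_LdG_bulk_pot_bnds}), so existence, uniqueness, and the a priori bound $\|\tXi\|_{\bigW} \le c\,\|(\tH_{\Om},\tH_{\Gm})\|$ follow by the same Faedo--Galerkin and energy arguments already used for \cref{prop:disc-w-estm} and \cref{thm:exist_state}, with the bounded reaction term absorbed via Gronwall's inequality. Linearity in $(\tH_{\Om},\tH_{\Gm})$ is immediate, so $S'_{\domcon,\bdycon}$ is a bounded linear operator, as required.

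Next I would bound the remainder. Write $\tQ^{h} := S(\domcon+\tH_{\Om},\bdycon+\tH_{\Gm})$, $\delta := \tQ^{h} - \tQ$, and $w := \delta - \tXi$. Subtracting the equations for $\tQ^{h}$, $\tQ$, and $\tXi$, and inserting the pointwise Taylor expansion
\begin{equation*}
\bulkfunc'(\tQ^{h}) - \bulkfunc'(\tQ) = \bulkfunc''(\tQ)\dd\delta + R, \qquad R := \int_{0}^{1}\big(\bulkfunc''(\tQ+\sigma\delta) - \bulkfunc''(\tQ)\big)\dd\delta\,d\sigma,
\end{equation*}
I find that $w$ solves the same linear system as $\tXi$ but with right-hand side $-\bulketa^{-2}R$, homogeneous boundary data, and $w(\cdot,0)=0$. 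Since $\bulkfunc''$ is globally Lipschitz (equivalently $\bulkfunc'''$ is uniformly bounded, as recorded after \cref{eqn:mod_LdG_bulk_pot_bnds}), the remainder obeys the pointwise bound $|R| \le c\,|\delta|^{2}$ a.e.\ in $\domcyl$.

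I would then run the same two-tier energy estimate as in \cref{prop:disc-w-estm} on the $w$-system. Testing with $w$ and with $w_t$ (handling $\int_{\Om}\bulkfunc''(\tQ)\dd w\dd w_t$ via the product rule, where the arising factor $\bulkfunc'''(\tQ)\dd\tQ_t$ is controlled by $\tQ_t \in L^{2}(\domcyl)$), then applying weighted Young and Gronwall, yields $\|w\|_{\bigW}^{2} \le c\,\|R\|_{L^{2}(\domcyl)}^{2} \le c\,\|\delta\|_{L^{4}(\domcyl)}^{4}$. To close the estimate I use the embedding $\bigW \hookrightarrow L^{\infty}(0,\tfinal;L^{6}(\Om))$ (valid since $\Vs \hookrightarrow L^{6}$ in three dimensions) together with Hölder in space to get $\|\delta\|_{L^{4}(\domcyl)}^{4} \le c\,\|\delta\|_{\bigW}^{4}$, and then the global Lipschitz bound \cref{eq:lim_local_Lip_W} to obtain $\|\delta\|_{\bigW} \le c\,\|(\tH_{\Om},\tH_{\Gm})\|$. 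Combining gives $\|w\|_{\bigW} \le c\,\|(\tH_{\Om},\tH_{\Gm})\|^{2}$, i.e. the remainder is $o(\|(\tH_{\Om},\tH_{\Gm})\|_{\conreg})$ uniformly in the direction, which is precisely Fréchet differentiability with $S'_{\domcon,\bdycon}(\tH_{\Om},\tH_{\Gm})=\tXi$.

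The main obstacle is obtaining the remainder estimate in the \emph{full} $\bigW$ norm rather than merely in $C([0,\tfinal];\Hs)\cap L^{2}(0,\tfinal;\Vs)$: this forces the higher-order (test-with-$w_t$) estimate, and there the coupling term $\int_{\Om}\bulkfunc''(\tQ)\dd w\dd w_t$ and the time derivative of $\bulkfunc''(\tQ)$ must be controlled, which is exactly where the global boundedness of $\bulkfunc'''$ and the $H^{1}(0,\tfinal;\Hs)$-regularity of the state enter. The tensor-valued contractions add bookkeeping relative to the scalar case but do not alter the structure of the argument.
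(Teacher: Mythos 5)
Your proposal is correct and follows essentially the same route as the paper: the same decomposition $w = \tQ^{h} - \tQ - \tXi$ (the paper's $\tA_{\tH}$), the same integral Taylor remainder with the pointwise bound $|R| \le c\,|\delta|^{2}$ from the global Lipschitz continuity of $\bulkfunc''$, the same a priori bound $\|w\|_{\bigW} \le c\,\|R\|_{L^{2}(\domcyl)}$ obtained by reusing the forward-problem energy estimates, and the same closing step via the Sobolev embedding and the global Lipschitz bound \cref{eq:lim_local_Lip_W} to conclude $\|w\|_{\bigW} = O(\|(\tH_{\Om},\tH_{\Gm})\|^{2})$. Your write-up is somewhat more explicit than the paper's on the well-posedness of the linearized system and on the test-with-$w_t$ estimate, but these are details the paper deliberately compresses, not a difference in method.
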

\begin{proof}
%We begin with the following observation. 
Let $(\domcon,\bdycon),(\tH_{\Om},\tH_{\Gm}) \in L^2(\domcyl) \times L^2(\bdycyl)$, denote $\bar{\tQ} := S(\domcon,\bdycon)$, $\tQ_{\tH} := S(\domcon+\tH_{\Om},\bdycon + \tH_{\Gm})$, and let $\tXi$ be the solution to
\vspace{-0.2cm}
\begin{equation}\label{eq:forward_problem_deriv}
\begin{split}
\tXi_t - \Delta \tXi + \frac{1}{\bulketa^2} \bulkfunc''(\bar{\tQ}) \dd \tXi
&= \domcoef \tH_{\Om}, \text{ in } \Omega, \quad 
\partial_{\vnu} \tXi + \anchcoef \tXi
= \anchcoef \tH_{\Gm}, \text{ on } \Gm,
\end{split}
\end{equation}
with $\tXi(\cdot,0) = 0$.  
%In order for this to be well-defined, we also require $(\domcon,\bdycon) \in L^r(\domcyl) \times L^s(\bdycyl)$ with $r > 5/2$ and $s > 4$ and $\tQ_{0} \in C(\overline{\Omega})$; as
It follows from \Cref{thm:higher_reg_thm} that 
%this implies 
$\bulkfunc''(\bar{\tQ})$ is continuous on $\overline{\domcyl}$. 
%via \Cref{thm:higher_reg_thm}.  
Next, we set 
$
\tA_{\tH} := \tQ_{\tH} - \bar{\tQ} - \tXi
$
and show that it behaves like $o(\|(H_{\Omega},H_{\Gamma})\|)$ in the appropriate norms.  
Almost everywhere on $\domcyl$, we have
\vspace{-0.2cm}
\begin{equation*}
\begin{split}
    \bulkfunc'(\tQ_{\tH}) - \bulkfunc'(\bar{\tQ}) - \bulkfunc''(\bar{\tQ}) \dd \tXi =
    \bulkfunc''(\bar{\tQ}) \dd \tA_{\tH} - \tX_{\tH},
\end{split}
\end{equation*}
where $\tX_{\tH} := -\int_{0}^{1} \bulkfunc''(\bar{\tQ} + \tau(\tQ_{\tH} - \bar{\tQ}))- \bulkfunc''(\bar{\tQ}) d\tau \dd [\tQ_{\tH} - \bar{\tQ}]$.  
With the extended assumptions of \Cref{thm:exist_state}, $\tA_{\tH}$ satisfies
\vspace{-0.2cm}
\begin{equation}\label{eq:linear_forward_h_dep}
\begin{split}
(\tA_{\tH})_t - \Delta \tA_{\tH} + \frac{1}{\bulketa^2}\bulkfunc''(\bar{\tQ}) \dd \tA_{\tH}
&= \frac{1}{\bulketa^2} \tX_{\tH}, \text{ in } \Omega, ~
\partial_{\vnu} \tA_{\tH} + \anchcoef \tA_{\tH}
= 0, \text{ on } \Gm,
\end{split}
\end{equation}
with $\tA_{\tH}(\cdot,0) = 0$.  The system \cref{eq:linear_forward_h_dep} is a simplified version of the nonlinear forward problem. Therefore, using a slight modification of the same arguments, we can prove that $\tA_{\tH} \in \bigW$.  In particular, we readily obtain the following bound (for a generic constant $c > 0$ independent of $\tH = (\tH_{\Om},\tH_{\Gm})$): 
$
\| \tA_{\tH} \|_{\bigW} \le c \| (\tX_{\tH},0) \|_{L^2(\domcyl) \times L^2(\bdycyl)}.
$  
%In order to obtain the desired differentiability statement, we need to more closely investigate the $L^2(\domcyl)$-norm of $\tX_{\tH}$.

As a consequence of the Lipschitz continuity of $\psi''$, we have (a.e.\ on $\domcyl$)
\vspace{-0.3cm}
\[
| \tX_{\tH} | \le 
\int_{0}^{1} |\bulkfunc''(\bar{\tQ} + \tau(\tQ_{\tH} - \bar{\tQ}))- \bulkfunc''(\bar{\tQ})| d\tau |\tQ_{\tH} - \bar{\tQ}|,
\le c |\tQ_{\tH} - \bar{\tQ}|^2,
\]
where $c$ represents the Lipschitz modulus for $\psi''$. Note that $\psi'''(Q)$ is only nonzero on the set where $\tr(Q^2) = |Q|^2$ is between two fixed constants $b_1$ and $b_2$, i.e., where $Q$ is bounded in space-time.  Then for a.e.\ $t \in (0,\tfinal)$ we obtain the bound: 
$
\aligned
\| \tX_{\tH}(t) \|^2_{\Hs} 
 \le c\|\tQ_{\tH}(t) - \bar{\tQ}(t)\|^4_{L^4(\Omega)}. 
\endaligned
$
It follows from
\cref{eq:lim_local_Lip_W}, the definition of $\bigW$, and the Sobolev embedding theorem that 
$
 \|\tQ_{\tH}(t) - \bar{\tQ}(t) \|_{L^p(\Omega)} 
\leq
  c\| (\tH_{\Om},\tH_{\Gm}) \|_{\conreg}, 
$
for a.e.\ $t \in (0,\tfinal)$ and any $p \in [1,6]$.  Consequently, we have
\vspace{-0.2cm}
\[
\aligned
\| \tX_{\tH}(t) \|^2_{\Hs} 
&\le c\| (\tH_{\Om},\tH_{\Gm}) \|_{\conreg}^2 \|\tQ_{\tH}(t) - \bar{\tQ}(t)\|^2_{L^4(\Omega)}.
\endaligned
\]
Integrating in time and taking the square root, we obtain
\vspace{-0.2cm}
\[
\aligned
\| \tX_{\tH} \|_{L^2(\bdycyl)} 
&\le c\| (\tH_{\Om},\tH_{\Gm}) \|_{\conreg}
\|\tQ_{\tH} - \bar{\tQ}\|_{L^2(0,\tfinal;L^4(\Omega))},
\endaligned
\]
which immediately yields $\| (\tX_{\tH},0) \|_{L^2(\domcyl) \times L^2(\bdycyl)} = o(\| (\tH_{\Om},\tH_{\Gm}) \|_{\conreg})$.
%since $\|\tQ_{\tH} - \bar{\tQ}\|_{L^2(0,\tfinal;L^4(\Omega))} = O(\| (\tH_{\Om},\tH_{\Gm}) \|_{\conreg})$.
\end{proof}
%We have then proven the following theorem.
%\begin{theorem}\label{thm:deriv_cts}
%In addition to \cref{eq:minimal_input_reg}, suppose that $\tQ_{0} \in \Vs \cap C(\overline{\Omega})$. Then the control-to-state mapping $S$ from $\conreg$ into $\bigW$ is Fr\'echet differentiable.  Moreover, given $(\domcon,\bdycon),(\tH_{\Om},\tH_{\Gm}) \in \conreg$, the derivative of $S$ at $(\domcon,\bdycon)$ in direction $(\tH_{\Om},\tH_{\Gm})$ is given by the unique solution 
%$
%\tXi = S'_{\domcon,\bdycon}(\tH_{\Om},\tH_{\Gm})
%$ of \cref{eq:forward_problem_deriv}.
%\end{theorem}

%This leads to the following corollary.
\begin{corollary}\label{cor:red_grad}
Under the assumptions of \Cref{thm:deriv_cts}, the reduced objective function $\reducedobj : \conreg \to \overline{\R}$ defined by
\vspace{-0.2cm}
\begin{equation}\label{eq:reduced_obj}
\aligned
\reducedobj(\domcon,\bdycon) 
:=
&\frac{\beta_{\domcyl}}{2} \| S(\domcon,\bdycon) - \domtarget \|^2_{L^2(\domcyl)} + 
\frac{\beta_{\bdycyl}}{2} \| S(\domcon,\bdycon)|_{\Gm} - \bdytarget \|^2_{L^2(\bdycyl)} \\
+ \frac{\beta_{\tfinal}}{2} & \| S(\domcon,\bdycon)(\cdot,T) - \fintarget \|^2_{\Hs} + 
\frac{\alpha_{\domcyl}}{2} \| \domcon \|^2_{L^2(\domcyl)} + 
\frac{\alpha_{\bdycyl}}{2} \| \bdycon \|^2_{H^1(0,\tfinal;\HGs)},
\endaligned
\end{equation}
is Fr\'echet differentiable.  Furthermore, given $\bar{\tQ} = S(\bardomcon, \barbdycon)$, a direction $(\tH_{\Om},\tH_{\Gm}) \in \conreg$, and $\tXi = S'_{\bardomcon,\barbdycon}(\tH_{\Om},\tH_{\Gm})$, the associated solution of \cref{eq:forward_problem_deriv}, the directional derivative of $\reducedobj$ at $(\bardomcon,\barbdycon)$ in direction $(\tH_{\Om},\tH_{\Gm})$ is given by:
\vspace{-0.2cm}
\begin{equation}\label{eq:dir_der}
\aligned
\reducedobj_{\bardomcon,\barbdycon}' &(\tH_{\Om},\tH_{\Gm}) 
=  
\beta_{\domcyl} \inner{\bar{\tQ} - \domtarget}{\tXi}{L^2(\domcyl)} +
\beta_{\bdycyl} \inner{\bar{\tQ}_{\Gm} -  \bdytarget}{\tXi}{L^2(\bdycyl)} \\
+ \beta_{\tfinal} & \inner{\bar{\tQ}(\tfinal) - \fintarget}{\tXi(\tfinal)}{\Hs} + 
\alpha_{\domcyl} \inner{\bardomcon}{\tH_{\Om}}{L^2(\domcyl)} +
\alpha_{\bdycyl} \inner{\barbdycon}{\tH_{\Gm}}{H^1(0,\tfinal;\HGs)}.
\endaligned
\end{equation}
Moreover, if $\tR$ is the unique weak solution of the linear parabolic (adjoint) equation:
\vspace{-0.2cm}
\begin{subequations}\label{eq:adjoint_equation}
\begin{align}
-\tR_t - \Delta \tR + \frac{1}{\bulketa^2} \bulkfunc''(\bar{\tQ}) \dd \tR
&=  \beta_{\domcyl}(\bar{\tQ} - \domtarget),\qquad~~ \text{ in } \Omega \times (0,\tfinal) \\
\partial_{\vnu} \tR + \anchcoef \tR
&= 
\beta_{\bdycyl}(\bar{\tQ}_{\Gm} -  \bdytarget), \qquad \text{ on } \Gm \times (0,\tfinal) \\
\tR(\cdot,\tfinal) &= \beta_{\tfinal}(\bar{\tQ}(\tfinal) - \fintarget), \quad \text{ in } \Omega,
\end{align}
\end{subequations}
then we have
\vspace{-0.2cm}
\begin{equation}\label{eq:red_grad}
\begin{split}
\reducedobj'_{\bardomcon,\barbdycon}(\tH_{\Om},\tH_{\Gm}) &= \domcoef \inner{\tR}{\tH_{\Om}}{L^2(\domcyl)}
 + 
\anchcoef \inner{\tR}{\tH_{\Gm}}{L^2(\bdycyl)} \\
& \qquad \quad + \alpha_{\domcyl} \inner{\bardomcon}{\tH_{\Om}}{L^2(\domcyl)} +
\alpha_{\bdycyl} \inner{\barbdycon}{\tH_{\Gm}}{H^1(0,\tfinal;\HGs)}.
\end{split}
\end{equation}
% the gradient of $\reducedobj(\bardomcon,\barbdycon)$, denoted by $\nabla \reducedobj(\bardomcon,\barbdycon)$, is fully characterized by the 
\end{corollary}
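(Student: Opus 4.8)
The plan is to establish Fréchet differentiability of $\reducedobj$ by the chain rule, read off the directional derivative \cref{eq:dir_der} directly, and then convert it into the adjoint representation \cref{eq:red_grad} via a duality (integration-by-parts-in-time) argument. First I would observe that $\reducedobj(\domcon,\bdycon) = \costobj(S(\domcon,\bdycon),\domcon,\bdycon)$ is the composition of the quadratic tracking functional $\costobj$ with the pair $(S,\mathrm{id})$. Each of the three tracking terms is a smooth quadratic functional of $\tQ$ precomposed with either the solution map $S$, its boundary restriction, or its final-time evaluation; the latter two are bounded linear operators on $\bigW$ (by the trace theorem and the embedding $\bigW \hookrightarrow C([0,\tfinal];\Hs)$). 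Since $S$ is Fréchet differentiable from $\conreg$ into $\bigW$ by \Cref{thm:deriv_cts}, the chain rule yields Fréchet differentiability of $\reducedobj$. Applying it term by term — differentiating the regularization terms $\tfrac{\alpha_{\domcyl}}{2}\|\domcon\|^2$ and $\tfrac{\alpha_{\bdycyl}}{2}\|\bdycon\|^2$ directly — produces \cref{eq:dir_der}, in which $\tXi = S'_{\bardomcon,\barbdycon}(\tH_{\Om},\tH_{\Gm})$ solves \cref{eq:forward_problem_deriv}.

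Next I would establish well-posedness of the adjoint system \cref{eq:adjoint_equation}. Since $\bulkfunc''(\bar{\tQ})$ is uniformly bounded by \cref{eqn:mod_LdG_bulk_pot_bnds} and continuous on $\overline{\domcyl}$ by \Cref{thm:higher_reg_thm}, and the right-hand data lie in $L^2(\domcyl)$, $L^2(\bdycyl)$, and $\Hs$ respectively, the time reversal $\tau = \tfinal - t$ transforms \cref{eq:adjoint_equation} into a forward linear parabolic problem of the same structure as \cref{eq:forward_problem_deriv}. The Galerkin/energy arguments behind \Cref{thm:exist_state} then furnish a unique weak solution with $\tR \in H^1(0,\tfinal;\Hs) \cap L^2(0,\tfinal;\Vs) \subset \bigW$.

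The central step is the duality computation. I would test the weak form of the linearized equation \cref{eq:forward_problem_deriv} against $\tR$ and the weak form of the adjoint equation \cref{eq:adjoint_equation} against $\tXi$, integrate over $(0,\tfinal)$, and subtract. The gradient terms $\inner{\nabla\tXi}{\nabla\tR}{\Hs}$, the Robin boundary terms $\anchcoef\inner{\tXi}{\tR}{\HGs}$, and the zeroth-order terms $\tfrac{1}{\bulketa^2}\inner{\bulkfunc''(\bar{\tQ})\dd\tXi}{\tR}{\Hs}$ cancel pairwise; the last cancellation uses the major symmetry $[\bulkfunc'']_{ijkl} = [\bulkfunc'']_{klij}$ of the Hessian acting on symmetric tensors, which holds because $\bulkfunc$ is a smooth scalar function of $\tQ$ (cf. \cref{eqn:1st_2nd_deriv_bulkfunc}). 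What remains is $\int_0^{\tfinal}\tfrac{d}{dt}\inner{\tXi}{\tR}{\Hs}\,dt$, which equals $\inner{\tXi(\tfinal)}{\tR(\tfinal)}{\Hs}$ since $\tXi(0)=0$; substituting the terminal condition $\tR(\tfinal) = \beta_{\tfinal}(\bar{\tQ}(\tfinal)-\fintarget)$ identifies the three tracking terms of \cref{eq:dir_der} with $\domcoef\inner{\tR}{\tH_{\Om}}{L^2(\domcyl)} + \anchcoef\inner{\tR}{\tH_{\Gm}}{L^2(\bdycyl)}$. Combining this with the unchanged regularization terms yields \cref{eq:red_grad}.

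The main obstacle is the rigorous justification of the integration by parts in time: the identity $\inner{\tXi_t}{\tR}{\Hs} + \inner{\tR_t}{\tXi}{\Hs} = \tfrac{d}{dt}\inner{\tXi}{\tR}{\Hs}$ and the resulting endpoint evaluation require both $\tXi$ and $\tR$ to lie in $H^1(0,\tfinal;\Hs)\cap L^2(0,\tfinal;\Vs)$, so that each admits a continuous $\Hs$-valued representative and the Lions--Magenes integration-by-parts formula applies. This regularity is supplied by \Cref{thm:deriv_cts} for $\tXi$ and by the adjoint well-posedness above for $\tR$; the symmetry of $\bulkfunc''$ is the only other delicate point, and it follows immediately from $\bulkfunc''$ being a Hessian.
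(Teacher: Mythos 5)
Your proposal is correct and follows essentially the same route as the paper: Fréchet differentiability of $\reducedobj$ via the chain rule applied to $\costobj\circ(S,\mathrm{id})$ using \Cref{thm:deriv_cts}, and then the standard adjoint (duality) calculation — testing \cref{eq:forward_problem_deriv} with $\tR$ and \cref{eq:adjoint_equation} with $\tXi$ and integrating by parts in time — which the paper delegates to the cited literature. Your explicit attention to the major symmetry of the Hessian $\bulkfunc''(\bar{\tQ})$ and to the Lions--Magenes justification of the endpoint evaluation are exactly the details that make the ``standard computation'' go through in the tensor-valued setting.
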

\begin{proof}
The differentiability of the reduced objective functional is a consequence of \Cref{thm:deriv_cts}, the smoothness of the original tracking-type functional, and the chain rule.  This yields \cref{eq:dir_der}. For the equivalent characterization \cref{eq:red_grad}, we use   \cref{eq:dir_der} and the adjoint equations \cref{eq:adjoint_equation} by following the standard computations for the adjoint calculus, see e.g., \cite{Troltzsch_book2010}.
\end{proof}

%\subsection{First-Order Optimality Conditions}
\Cref{thm:deriv_cts} and \Cref{cor:red_grad} provide us with first-order optimality conditions of primal and dual type and a means of efficiently calculating derivatives of the reduced objective functional, which are needed for numerical methods. 
%We state the primal conditions here and derive the dual conditions along with the adjoint equation in the next subsection. 

\begin{theorem}
In addition to \cref{eq:minimal_input_reg}, suppose $\tQ_{0} \in \Vs \cap C(\overline{\Omega})$
and 
$
\conadmis \cap \conreg \ne \emptyset.
$
If the optimal solution  $(\bardomcon,\barbdycon)$ of  \cref{eq:optconprob}-\cref{eq:con_constr} is in $\conadmis \cap \conreg$, then the following variational inequality holds:
\vspace{-0.2cm}
\begin{multline}\label{eq:var_ineq}
\domcoef \inner{\tR}{\domcon - \bardomcon}{L^2(\bdycyl)} + 
\anchcoef \inner{\tR}{\bdycon - \barbdycon}{L^2(\bdycyl)} \\
+ \alpha_{\domcyl} \inner{\bardomcon}{\domcon - \bardomcon}{L^2(\domcyl)} +
\alpha_{\bdycyl} \inner{\barbdycon}{\bdycon - \barbdycon}{H^1(0,\tfinal;\HGs)} \ge 0,
\end{multline}
for all $(\domcon, \bdycon ) \in \conadmis \cap \conreg$, where $\tR$ solves \cref{eq:adjoint_equation} with $\bar{\tQ} = S(\bardomcon,\barbdycon)$.
\end{theorem}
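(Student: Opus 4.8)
The plan is to reduce the problem via the control-to-state operator $S$ and then apply the standard first-order necessary condition for minimizing a Fréchet differentiable functional over a convex set, using the adjoint representation of the gradient from \Cref{cor:red_grad}. First I would record two structural facts about the feasible set. Since $\conadmis$ is convex by hypothesis and $\conreg$ is an intersection of product linear spaces, hence itself a linear subspace, the set $\conadmis \cap \conreg$ is convex. Moreover, because $\conreg$ is linear, for any two feasible controls the difference $(\tH_{\Om},\tH_{\Gm}) := (\domcon - \bardomcon, \bdycon - \barbdycon)$ again lies in $\conreg$, which is exactly the space on which \Cref{thm:deriv_cts} and \Cref{cor:red_grad} establish Fréchet differentiability of the reduced objective $\reducedobj$.

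Next I would invoke the convexity argument. Because $(\bardomcon,\barbdycon)$ minimizes $\reducedobj$ over $\conadmis \cap \conreg$, for any $(\domcon,\bdycon)$ in this set the segment $(\bardomcon,\barbdycon) + t(\tH_{\Om},\tH_{\Gm})$, $t \in [0,1]$, remains feasible, so the scalar map $t \mapsto \reducedobj((\bardomcon,\barbdycon) + t(\tH_{\Om},\tH_{\Gm}))$ attains its minimum on $[0,1]$ at $t = 0$. Its right-hand derivative at $t=0$ is therefore nonnegative and, by Fréchet differentiability, coincides with the directional derivative, giving $\reducedobj'_{\bardomcon,\barbdycon}(\domcon - \bardomcon, \bdycon - \barbdycon) \ge 0$.

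Finally I would substitute the gradient representation. Inserting $\tH_{\Om} = \domcon - \bardomcon$ and $\tH_{\Gm} = \bdycon - \barbdycon$ into \cref{eq:red_grad}, with $\tR$ the weak solution of the adjoint system \cref{eq:adjoint_equation} for $\bar{\tQ} = S(\bardomcon,\barbdycon)$, turns this inequality into exactly \cref{eq:var_ineq}. Since the argument holds for every $(\domcon,\bdycon) \in \conadmis \cap \conreg$, the claimed variational inequality follows.

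The argument itself is routine; the substantive work sits in \Cref{thm:deriv_cts} and \Cref{cor:red_grad}. The one point demanding care, and the reason for the hypotheses $\tQ_{0} \in \Vs \cap C(\overline{\Omega})$, $\conadmis \cap \conreg \ne \emptyset$, and $(\bardomcon,\barbdycon) \in \conadmis \cap \conreg$, is that differentiability of the reduced objective is available only on the smaller, more regular space $\conreg$, not on all of $L^2(\domcyl) \times H^1(0,\tfinal;\HGs)$; these assumptions guarantee that both the optimal control and every feasible direction lie in $\conreg$, so that the directional-derivative formula \cref{eq:red_grad} is legitimately applicable along each feasible direction.
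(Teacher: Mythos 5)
Your proposal is correct and follows essentially the same route as the paper: both arguments use convexity of $\conadmis \cap \conreg$ to form nonnegative difference quotients at the minimizer, pass to the directional derivative via the Fr\'echet differentiability established in \Cref{thm:deriv_cts} and \Cref{cor:red_grad}, and then substitute the adjoint representation \cref{eq:red_grad} to obtain \cref{eq:var_ineq}. Your added remarks on why $\conreg$ is a linear subspace (so feasible differences remain admissible directions) make explicit a point the paper leaves implicit, but the substance is identical.
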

\begin{proof}
This is an immediate consequence of \Cref{thm:deriv_cts} and \Cref{cor:red_grad}. To see this, note that 
$
\reducedobj(\bardomcon,\barbdycon) \le 
\reducedobj(\domcon, \bdycon) \quad \forall (\domcon, \bdycon ) \in \conadmis
$.  
By assumption, $\conadmis \cap \conreg$ is a nonempty convex set. Therefore, the previous relation gives us the difference quotients
\[
0 \le \lambda^{-1}(\reducedobj(\bardomcon + \lambda \tH_{\Om},\barbdycon + \lambda \tH_{\Gm}) - \reducedobj(\bardomcon ,\barbdycon)),
\]
where $\lambda \in (0,1)$ and $(\tH_{\Om},\tH_{\Gm}) = (\domcon, \bdycon ) - (\bardomcon, \barbdycon)$ with $(\domcon,\bdycon ) \in \conadmis \cap \conreg$. The rest follows from \Cref{thm:deriv_cts} and \Cref{cor:red_grad}; in particular \cref{eq:red_grad}.
\end{proof}

%---------------------------------------------------------------
\section{Finite Element Approximation}\label{sec:FEM_approx}
%---------------------------------------------------------------

We discretize \cref{eq:weak_solution} in the following way.  First, we assume that $\Om$ is polyhedral so that it can be represented exactly by a conforming triangulation $\Tk_{h} = \{ T_{i} \}$ of shape regular simplices (e.g. tetrahedra), where $h = \max_{T \in \Tk_{h}} \diam(T)$.  In other words, $\Om \equiv \cup_{T \in \Tk_{h}} T$.  Curved domains can also be considered; the polyhedral assumption is only for simplicity.

Next, we define the space of continuous piecewise polynomial functions on $\Om$: $\M_{h}^{k} (\Om) := \left\{ v \in C^{0} (\Om) \mid v |_{T} \in \Pk_{k} (T), ~\forall T \in \Tk_{h} \right\}$, for $k \geq 1$, and we reserve $\M_{h}^{0} (\Om)$ for piecewise constant functions.  
%\begin{equation}\label{eqn:std_Pk_FE_space}
%\begin{split}
%\M_{h}^{k} (\Om) := \left\{ v \in C^{0} (\Om) \mid v |_{T} \in \Pk_{k} (T), ~\forall T \in \Tk_{h} \right\},
%\end{split}
%\end{equation}
%where $\Pk_{k} (T)$ is the space of polynomials of degree $\leq k$ on $T$, for $k \geq 1$.  When $k=0$, we have {shorten}
%\begin{equation}\label{eqn:std_P0_FE_space}
%\begin{split}
%\M_{h}^{0} (\Om) := \left\{ v \in L^2 (\Om) \mid v |_{T} \in \Pk_{0} (T), ~\forall T \in \Tk_{h} \right\},
%\end{split}
%\end{equation}
%which consists of piecewise constant functions.
Let $\{ \basis^{i} \}_{i=1}^5$ be a basis of $\symmtraceless$.  We then define the following continuous, piecewise linear approximation of $\Vs$:
\vspace{-0.3cm}
\begin{equation}\label{eqn:Q-tensor_FE_space}
\begin{split}
	\Vs_{h} := \left\{ \tP \in C^{0} (\Om;\symmtraceless) \mid \tP = \sum_{i=1}^5 p_{i,h} \basis^{i}, ~ p_{i,h} \in \M_{h}^{1} (\Om), ~ 1 \leq i \leq 5 \right\} \subset \Vs,
\end{split}
\end{equation}
and denote by $\interp$ the standard Lagrange interpolant on $\Vs_{h}$.  Therefore, we approximate $\tQ \in L^2(0,\tfinal;\Vs)$ by $\tQ_{h} \in H^1(0,\tfinal;\Vs_{h})$, i.e. piecewise linear in time; c.f. \cite{Davis_SJNA1998}.  We also introduce the following piecewise constant approximations of $\Hs$ and $\HGs$, respectively, for approximating the controls $\domcon$, $\bdycon$:
\vspace{-0.3cm}
\begin{equation}\label{eqn:control_FE_space}
\begin{split}
	\Hs_{h} &:= \left\{ \tP \in L^2 (\Om;\symmtraceless) \mid \tP = \sum_{i=1}^5 p_{i,h} \basis^{i}, ~ p_{i,h} \in \M_{h}^{0} (\Om), ~ 1 \leq i \leq 5 \right\} \subset \Hs, \\
	\HGsh &:= \left\{ \tP \in L^2 (\Gm;\symmtraceless) \mid \tP = \sum_{i=1}^5 p_{i,h} \basis^{i}, ~ p_{i,h} \in \M_{h}^{0} (\Gm), ~ 1 \leq i \leq 5 \right\} \subset \HGs,
\end{split}
\end{equation}
where $\M_{h}^{0} (\Gm) := \left\{ v \in L^2 (\Gm) \mid v |_{F} \in \Pk_{0} (F), ~\forall F \in \Fk_{h} \right\}$, where $\Fk_{h} = \{ F \}$ is the set of faces that make up $\partial \Om$.  Thus, we approximate $\domcon \in L^2(0,\tfinal;\Hs)$, $\bdycon \in H^1(0,\tfinal;\HGs)$ by $\domconh \in H^1(0,\tfinal;\Hs_{h})$, $\bdyconh \in H^1(0,\tfinal;\HGsh)$, respectively.  The control bounds are enforced at the nodal degrees-of-freedom of $\Hs_{h}$ and $\HGsh$, i.e. at the centroid of the mesh elements.

Furthermore, we discretize the time interval $[0, \tfinal]$ into a union of $K$ sub-intervals of uniform length $\dt$, i.e. time-steps.  With this, we write $\tQ_{h}^{k}(x) := \tQ_{h}(x,k \dt)$, and approximate $\tQ_{t}(x,k \dt)$ by the finite difference quotient: $\dt^{-1} \left( \tQ_{h}^{k+1}(x) - \tQ_{h}^{k}(x) \right)$.  
%\begin{equation*}
%	\tQ_{t}(x,k \dt) \approx \frac{\tQ_{h}^{k+1}(x) - \tQ_{h}^{k}(x)}{\dt}.
%\end{equation*}
In addition, the time-dependence of the controls is written $\domconh^{k}(x) := \domconh(x,k \dt)$, $\bdyconh^{k}(x) := \bdyconh(x,k \dt)$.

The fully discrete version of \cref{eq:weak_solution} is as follows.  Given the initial condition $\tQ^{0}(\cdot) := \interp \tQ(\cdot,t=0)$, and controls $\{ \domconh^{k} \}_{k=0}^{K} \subset \Hs_{h}$, $\{ \bdyconh^{k} \}_{k=0}^{K} \in \HGsh$, we iteratively solve the following implicit equation for $k = 0, ..., K-1$: find $\tQ_{h}^{k+1} \in \Vs_{h}$ such that
\begin{equation}\label{eq:FEM_var_form}
\begin{split}
\dt^{-1} \inner{\tQ_{h}^{k+1} - \tQ_{h}^{k}}{\tP_{h}}{\Hs} + & \inner{\nabla \tQ_{h}^{k+1}}{\nabla \tP_{h}}{\Hs} + 
\frac{1}{\bulketa^2} \inner{\bulkfunc'(\tQ_{h}^{k+1})}{\tP_{h}}{\Hs} \\
+ \anchcoef \inner{\tQ_{h}^{k+1}}{\tP_{h}}{\HGs} = \anchcoef & \inner{\bdyconh^{k+1}}{\tP_{h}}{\HGs} + \domcoef \inner{\domconh^{k+1}}{\tP_{h}}{\Hs}, ~~ \forall ~ \tP_{h} \in \Vs_{h}.
\end{split}
\end{equation}
For $\dt$ sufficiently small, depending on $\bulketa$, \cref{eq:FEM_var_form} is monotone at each time-step and can be effectively solved with Newton's method.  
Similar to \cref{eq:forward_problem}, \cref{eq:FEM_var_form} is a tensor-valued version of a discrete Allen-Cahn equation with Robin boundary conditions.  Convergence of $\tQ_{h}$ to the exact solution $\tQ$ of \cref{eq:forward_problem} follows from the standard theory for semi-linear parabolic problems; c.f. \cite{Davis_SJNA1998,Shen_DCDS2010,Zhao_JCP2016,Zhao_JSC2016}.  The adjoint problem is solved in an analogous way, using a similar discretization; since the adjoint PDE \cref{eq:adjoint_equation} is linear (variable coefficient), Newton's method is not required.

%%%%%%%%%%%%%%%%%%%%%%%%%%%%%%%%%%%%%%%%%%%%%%%%%%%%%%%%%%%%%%%%%%%%%%
\section{Numerical Results}\label{sec:results}
%%%%%%%%%%%%%%%%%%%%%%%%%%%%%%%%%%%%%%%%%%%%%%%%%%%%%%%%%%%%%%%%%%%%%%

We approximate minimizers of \cref{eq:optconprob}, by discretizing the forward problem in \cref{eq:weak_solution} with the finite element method described in \cref{sec:FEM_approx}.  Moreover, the time integrals present in \cref{eq:optconprob} are discretized with the trapezoidal rule.  This leads to a discrete form of the adjoint problem in \cref{eq:adjoint_equation} along with the corresponding discrete form of the derivative functional \cref{eq:red_grad}.  Thus, we use a projected gradient optimization method, with a back-tracking line-search, see e.g., \cite{Bertsekas_IEEETAC1976,Dunn_SJCO1981}, to compute (discrete) optimal solutions of \cref{eq:optconprob}.  During the line-search, we compute the projection onto the convex set in \cref{eq:con_constr} by straightforward normalization of the current guess for $\domcon$ and $\bdycon$.  The entire algorithm was implemented in NGSolve \cite{Schoeberl_TR2014}.

We present examples when the dimension $d$ is $2$ or $3$.  Our experiments involve tensor quantities that are uniaxial (recall \cref{eqn:uniaxial} when $d=3$).  For any $d > 1$, a uniaxial $\tQ$ has the form $\tQ_{ij} = s_{*} \left( n_i n_j - \delta_{ij}/d \right)$, for $1 \leq i,j \leq d$, where $[n_{i}]_{i=1}^d \equiv n \in \R^d$ is a unit vector (often called the \emph{director}) and $s_{*}$ depends on the coefficients in $\bulkfunc$.

{The concept of \emph{defect} is ubiquitous in liquid crystals and plays a critical role in our numerical experiments.  Assuming that $\tQ$ has a uniaxial form, a defect corresponds to a discontinuity in the director $n$.  Let $d=2$ and suppose $n : \R^2 \to \R^2$ is a vector field defined in the plane, continuous everywhere except at isolated points.  The \emph{index} of $n$, about a point of discontinuity $p_0$, is simply the number of full rotations of $n(a(s))$ along a closed path $a(s)$ that surrounds $p_0$ (see \cite[pg. 280]{doCarmo:book}).  For vector fields, the index is always an integer.  If $\hat{n} : \R^2 \to \mathbb{RP}^{1}$, i.e. $\hat{n}(x) \equiv -\hat{n}(x)$ (also known as a line field), then the index may be a half-integer.  One can represent a line field $\hat{n}$ with a vector field $n$ (see \cite{Ball_PAMM2007}), and vice-versa, in the sense that $\hat{n} \otimes \hat{n} = n \otimes n \equiv [n_i n_j]_{i,j=1}^{d}$.  Thus, since all $\tQ$-tensors are uniaxial in dimension $d=2$, and the algebraic form of a uniaxial $\tQ$ involves $n \otimes n$, the \emph{degree} of the defect of $\tQ$ at $p_0$ is simply the index of the director $n$ (or equivalently $\hat{n}$) about $p_0$.}

{In dimension $d=3$, the degree of a defect makes sense relative to a plane in $\R^3$.  For instance, if the set of defects (points of discontinuity) forms a $C^1$ curve, $\alpha$, in $\R^3$, then the degree of a point on that curve is computed relative to the normal plane of the curve.  In other words, let $n$ be the eigenvector of $\tQ$ with largest eigenvalue and define the degree of the defect to be the index of $n$ with respect to a closed curve (in the normal plane) around a point of discontinuity in $\alpha$.}

{However, the LdG model will not create point (line) discontinuities in dimension $d=2$ ($d=3$) because $\tQ(t,\cdot) \in H^1(\Om;\symmtraceless)$.  Therefore, any potential discontinuities get smoothed out causing $\tQ$ to vanish there (i.e. the liquid crystal ``melts'').  Thus, in the LdG model, the location of defects are usually identified with regions where $\tQ_{ij}=0$ for $1 \leq i,j \leq d$.  For more information on defects, see \cite{Brinkman_PT1982,Brezis_CMP1986,Schopohl_PRL1987,Virga_book1994,Gu_PRL2000,LinLiu_JPDE2001,Musevic_Sci2006,Kralj_PRSA2014,BorthagarayWalker_chap2021}.}

\subsection{Control of a $+1/2$ degree point defect in two dimensions}\label{sec:ctrl_+1/2_defect_2D}

The domain is the unit square $\Om = (0,1)^2$ and the parameters of the forward problem are as follows.  The coefficients of the double well in \cref{eqn:Landau-deGennes_bulk_potential} are
\vspace{-0.2cm}
\begin{equation}\label{eqn:+1/2_defect_2D_bulk_pot_coefs}
\begin{split}
	\bulkK = 1, \quad \bulkA = 16.32653061225, \quad \bulkB = 0, \quad \bulkC = 66.63890045814,
\end{split}
\end{equation}
and $\bulkfunc(\tQ)$ has a global minimum at $\tQ_{*} = s_{*} \left[ n_i n_j - \delta_{ij}/2 \right]_{i,j=1}^{2}$, where $n \in \R^2$ is any unit vector, and $s_{*} = 0.7$.  The other coefficients are given by $\bulketa = 0.2$, $\domcoef = 0$, $\anchcoef = 100$.

The initial condition was defined as follows.  First, let $n = n(x_1,x_2)$ be given by
\vspace{-0.2cm}
\begin{equation}\label{eqn:+1/2_degree_defect}
\begin{split}
	n &= \left( \cos \frac{\theta[0.5,0.5]}{2}, \sin \frac{\theta[0.5,0.5]}{2} \right), \quad \theta[a,b](x_1,x_2) := \mbox{atan2} \left(\frac{x_2-b}{x_1-a}\right),
\end{split}
\end{equation}
where $\mbox{atan2}$ is the four-quadrant inverse tangent function and brackets $[a,b]$ indicate parameters.  In other words, $n \otimes n$ corresponds to a $+1/2$ degree defect centered at $(0.5,0.5)$.  Next, we set $r[a,b](x_1,x_2) = |(x_1 - a, x_2 - b)|$ and
\vspace{-0.2cm}
\begin{equation}\label{eqn:+1/2_defect_2D_init_cond}
\begin{split}
	\tQ^{0} := \frac{r^2[0.5,0.5]}{r^2[0.5,0.5] + \delta^2} s_{*} \left[ n_i n_j - \delta_{ij}/2 \right]_{i,j=1}^{2},
\end{split}
\end{equation}
where $\delta = \bulketa / 4$; this ensures that $\tQ^{0} \in H^1(\Om;\symmtraceless) \cap C^0(\Om;\symmtraceless)$.  
The final time is $\tfinal = 0.4$ and the time-step is $\dt = 0.004$.

The control parameters in \cref{eq:optconprob} are
\vspace{-0.3cm}
\begin{equation}\label{eqn:+1/2_defect_2D_ctrl_param}
\begin{split}
\beta_{\domcyl} = 1.0, \quad \beta_{\bdycyl} = 0.0, \quad \beta_{\tfinal} = 1.0, \quad \alpha_{\domcyl} = 0.0, \quad \alpha_{\bdycyl} = 0.01,
\end{split}
\end{equation}
and the targets are given by setting $z = \left( \cos \frac{\theta[0.25,0.35]}{2}, \sin \frac{\theta[0.25,0.35]}{2} \right)$ and
\vspace{-0.3cm}
\begin{equation}\label{eqn:+1/2_defect_2D_ctrl_targets}
\begin{split}
	\domtarget = \fintarget = \frac{r^2[0.25,0.35]}{r^2[0.25,0.35] + \delta^2} s_{*} \left[ z_i z_j - \delta_{ij}/2 \right]_{i,j=1}^{2}, \quad \bdytarget = 0.
\end{split}
\end{equation}
In other words, the control objective is to drive $\tQ$ toward a state that has a $+1/2$ degree defect located at the coordinates $(0.25,0.35)$.

In this example, we set $\domcon \equiv 0$, so we only optimize the boundary control $\bdycon$ which we enforce to be time-independent.  The initial guess for optimizing the control is given by setting $u = \left( \cos \theta[0.5,0.5], \sin \theta[0.5,0.5] \right)$ and
\vspace{-0.2cm}
\begin{equation}\label{eqn:+1/2_defect_2D_init_ctrl}
	\bdycon = \frac{r^2[0.5,0.5]}{r^2[0.5,0.5] + \delta^2} s_{*} \left[ u_i u_j - \delta_{ij}/2 \right]_{i,j=1}^{2}.
\end{equation}
Note: we enforce the convex constraint in \cref{eq:con_constr} with a projected gradient method.

\Cref{fig:ctrl_+1/2_defect_2D_optim_history} shows the performance of our gradient descent method.  The $\bdycon$ residual is computed as follows. 
Let $\tP_{\Gm}^{k}$ satisfy $\inner{\tP_{\Gm}^{k}}{\tH_{\Gm}}{L^2(\Gm)} = - \reducedobj_{0,\bdycon^{k}}'(0,\tH_{\Gm})$, for all $\tH_{\Gm} \in \HGsh$ (note: we treat the control as time-independent here), where $k$ is the optimization iteration.  In other words, $\tP_{\Gm}^{k}$ is the $L^2(\Gm)$ projection of the negative gradient.  Next, let $\Pi_{\Gm}$ be the projection onto the boundary control part of the convex set in \cref{eq:con_constr}.  Then the $\bdycon$ residual, at the $k$-th iteration, is defined as $\| \bdycon^{k} - \Pi_{\Gm} \left( \bdycon^{k} + \tP_{\Gm}^{k} \right) \|_{L^2(\Gm)}$.  
The computed boundary controls $\bdycon$ at later iterations do not exhibit any active set, i.e. the inequality constraint is not active.  However, we found that removing the constraint yielded an optimal $\bdycon$ that was \emph{not physical}, i.e. the eigenvalues of $\bdycon$ were outside the physical range (recall the discussion around \cref{eq:con_constr}).  Thus, it is necessary to enforce the inequality constraint during the line-search.
\begin{figure}%
\begin{subfigure}{.49\textwidth}
\centering
\includegraphics[width=.85\linewidth]{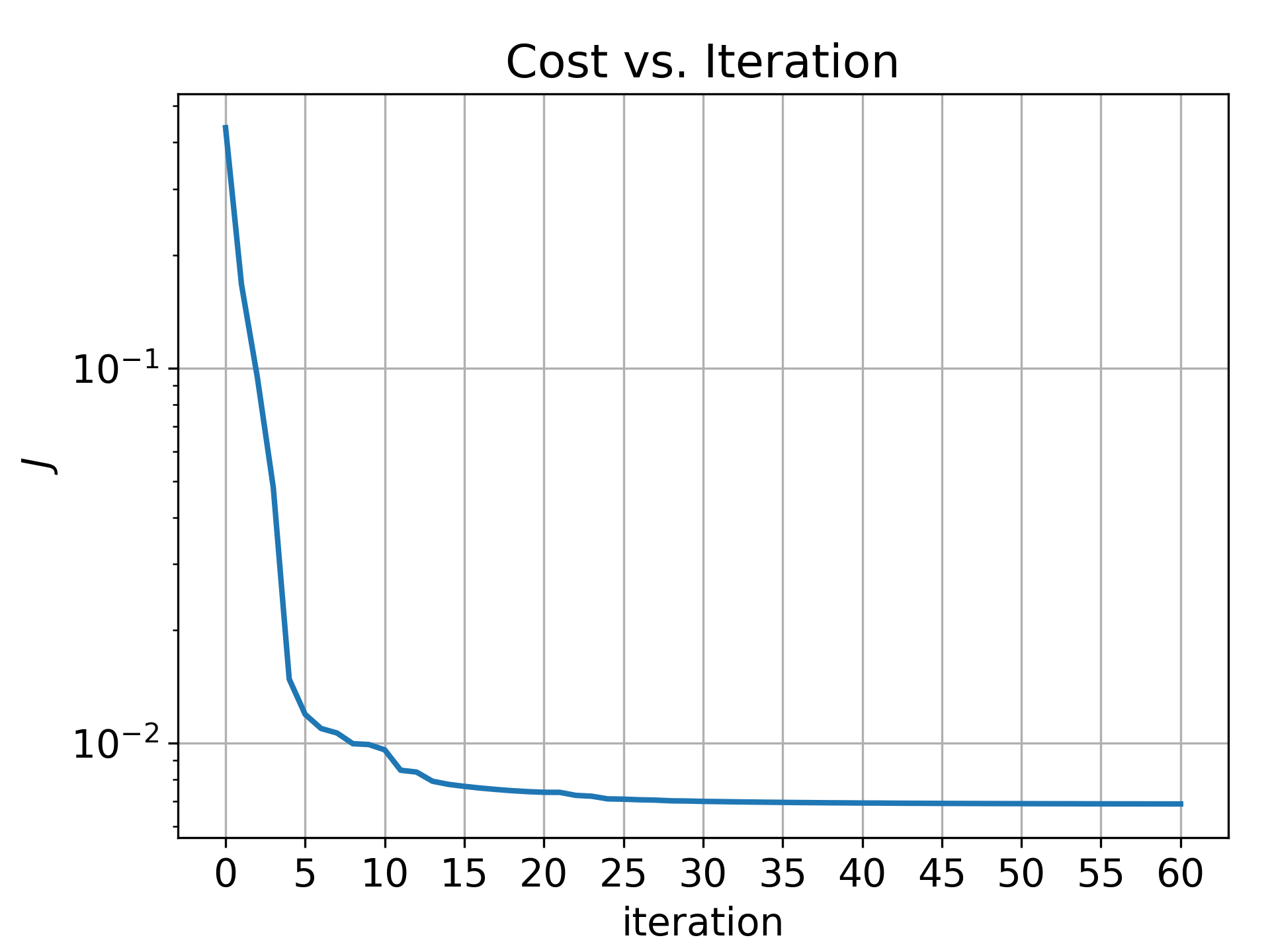}
\end{subfigure}
\begin{subfigure}{.49\textwidth}
\centering
\includegraphics[width=.85\linewidth]{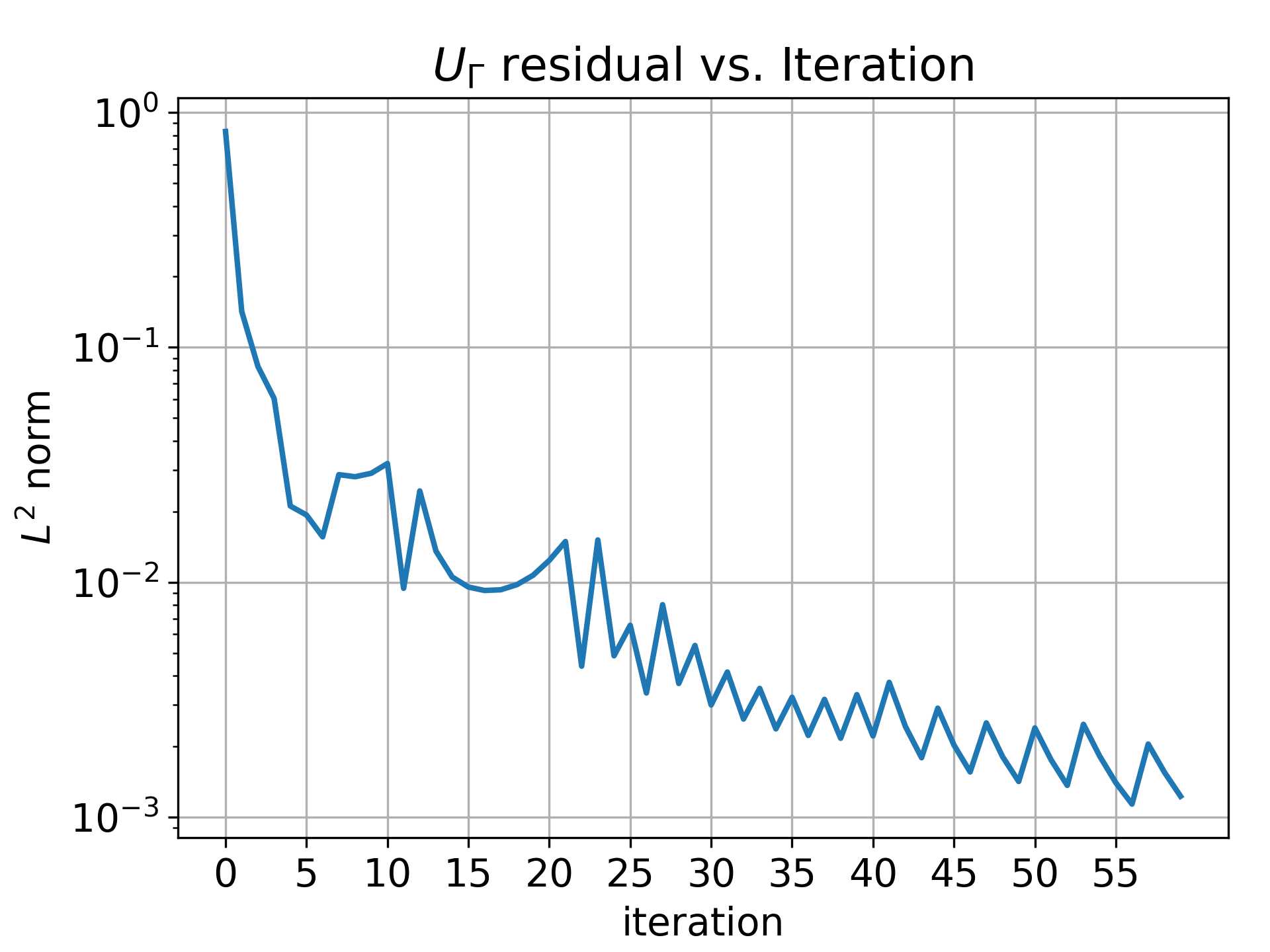}
\end{subfigure}

\caption{Optimization history (\cref{sec:ctrl_+1/2_defect_2D}).  The $\bdycon$ residual is described in the text.
}
\label{fig:ctrl_+1/2_defect_2D_optim_history}
\vspace{-0.2cm}
\end{figure}

\Cref{fig:ctrl_+1/2_defect_2D_target_bdycon} shows the target $\domtarget$ and optimized boundary control $\bdycon$.  
\begin{figure}%
\begin{subfigure}{.49\textwidth}
\centering
\includegraphics[width=.98\linewidth]{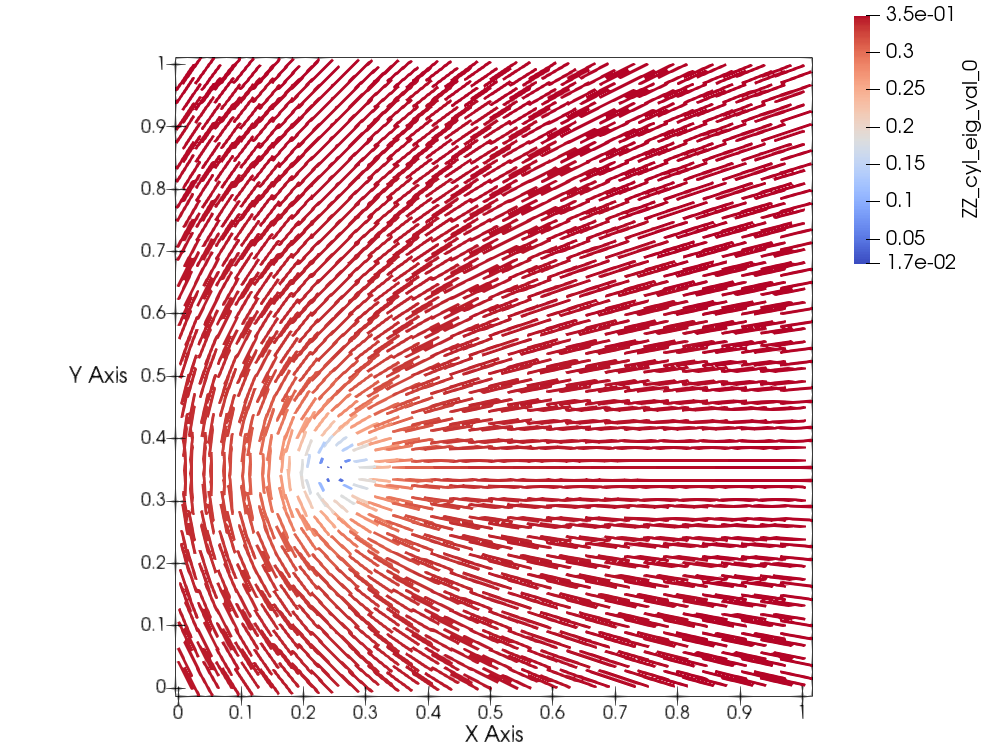}
\end{subfigure}
\begin{subfigure}{.49\textwidth}
\centering
\includegraphics[width=.98\linewidth]{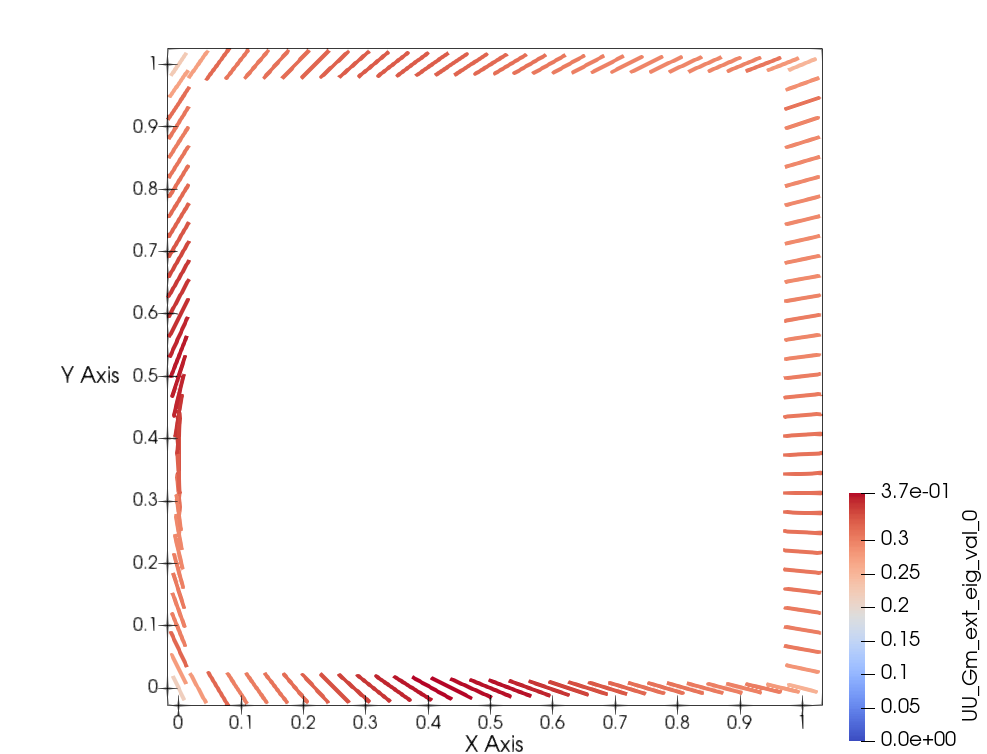}
\end{subfigure}

\caption{Target $\domtarget$ (left) and optimized boundary control $\bdycon$ (right) (\cref{sec:ctrl_+1/2_defect_2D}).  We visualize $\domtarget$ by plotting line segments that correspond to the eigenvector of $\domtarget$ with maximum eigenvalue; $\bdycon$ is visualized similarly.  Note how the boundary control mimics the boundary conditions of the target.
}
\label{fig:ctrl_+1/2_defect_2D_target_bdycon}
\vspace{-0.2cm}
\end{figure}
\Cref{fig:ctrl_+1/2_defect_2D_Q_0_tf} shows the initial and final state of $\tQ$ that clearly demonstrates the efficacy of the control.
\begin{figure}%
\begin{subfigure}{.49\textwidth}
\centering
\includegraphics[width=.98\linewidth]{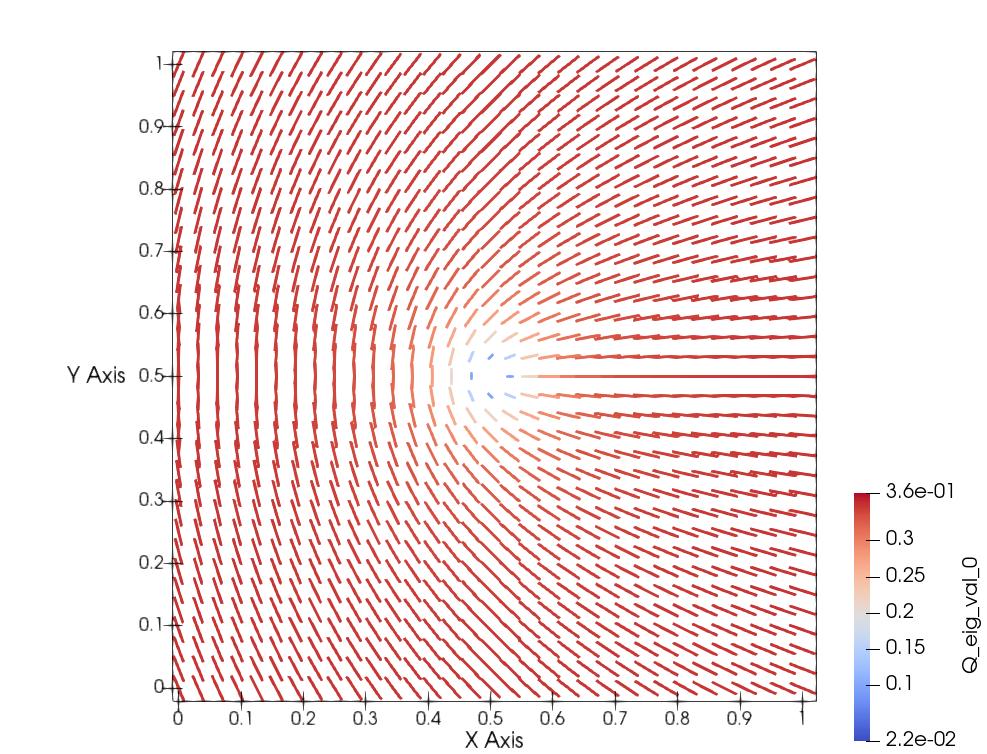}
\end{subfigure}
\begin{subfigure}{.49\textwidth}
\centering
\includegraphics[width=.98\linewidth]{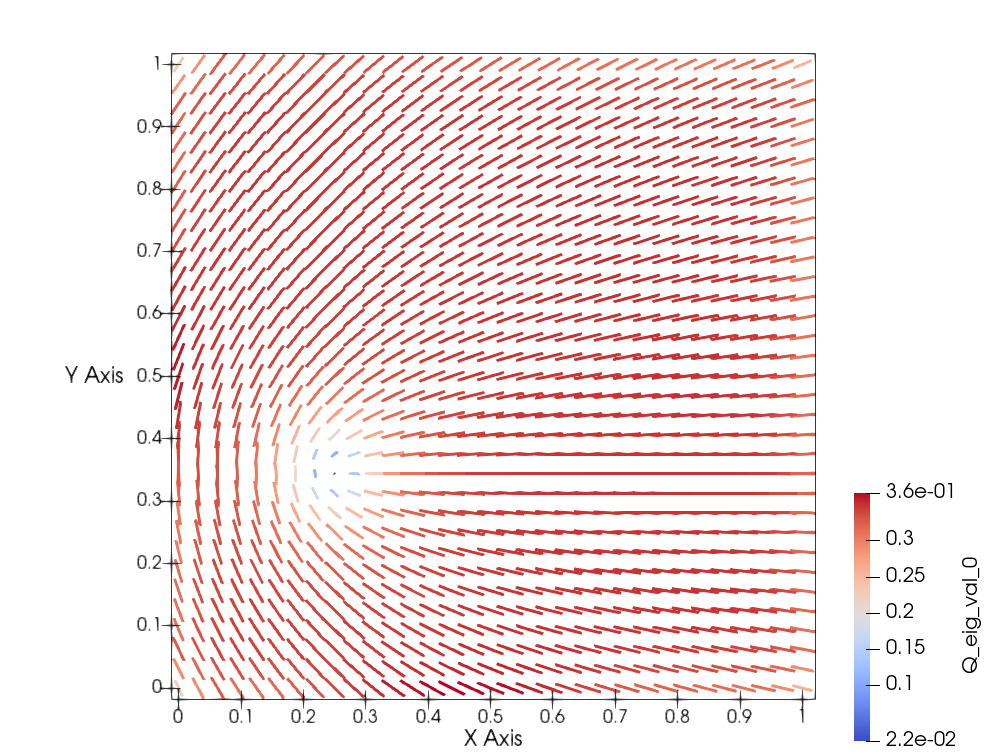}
\end{subfigure}

\caption{Initial state for $\tQ(t=0)$ (left) and final state for $\tQ(t=\tfinal)$ (right) (\cref{sec:ctrl_+1/2_defect_2D}). 
Line segments correspond to the eigenvector of $\tQ$ with maximum eigenvalue; the color scale is based on the maximum eigenvalue.  
The position of the point defect (at the final time) is almost exactly the same as in the target.
}
\label{fig:ctrl_+1/2_defect_2D_Q_0_tf}
\vspace{-0.2cm}
\end{figure}

\subsection{Prevent $+1/2$ and $-1/2$ degree point defects from annihilating in two dimensions}\label{sec:prevent_+/-_1/2_defects_annihlate_2D}

Most of the parameters are the same as in \cref{sec:ctrl_+1/2_defect_2D} with the following modifications.  
The initial condition is given by first defining:
\vspace{-0.2cm}
\begin{equation}\label{eqn:prevent_+/-_1/2_degree_defect}
\begin{split}
	n &:= \left( \cos \left( \frac{\theta[0.4, 0.505]}{2} + \frac{\pi}{2} \right), \sin \left( \frac{\theta[0.4, 0.505]}{2} + \frac{\pi}{2} \right) \right), \\
	m &:= \left( \cos -\frac{\theta[0.6, 0.495]}{2}, \sin -\frac{\theta[0.6, 0.495]}{2} \right),
\end{split}
\end{equation}
i.e. $n \otimes n$ corresponds to a $+1/2$ degree defect centered at $(0.4, 0.505)$ and $m \otimes m$ corresponds to a $-1/2$ degree defect centered at $(0.6, 0.495)$.  Then, we set
\vspace{-0.2cm}
\begin{equation}\label{eqn:+/-_1/2_defect_Q_funcs}
\begin{split}
	\tQ_{n}^{0} &:= \frac{r^2[0.4, 0.505]}{r^2[0.4, 0.505] + \delta^2} s_{*} \left[ n_i n_j - \delta_{ij}/2 \right]_{i,j=1}^{2}, \\
	\tQ_{m}^{0} &:= \frac{r^2[0.6, 0.495]}{r^2[0.6, 0.495] + \delta^2} s_{*} \left[ m_i m_j - \delta_{ij}/2 \right]_{i,j=1}^{2},
\end{split}
\end{equation}
where $\delta = \bulketa / 4$; this ensures that $\tQ_{n}^{0}, \tQ_{m}^{0} \in H^1(\Om;\symmtraceless) \cap C^0(\Om;\symmtraceless)$.  Then, the initial condition is given by the following interpolation:
\vspace{-0.2cm}
\begin{equation}\label{eqn:prevent_+/-_1/2_defect_2D_init_cond}
\begin{split}
	\tQ^{0} &:= (1 - x) \tQ_{n}^{0} + x \tQ_{m}^{0}.
\end{split}
\end{equation}
%The final time is $\tfinal = 0.4$ and the time-step is $\dt = 0.004$.

The control parameters in \cref{eq:optconprob} are the same as in 
\cref{eqn:+1/2_defect_2D_ctrl_param}, and the targets $\domtarget$, $\fintarget$ have the same form as \cref{eqn:prevent_+/-_1/2_defect_2D_init_cond}, except the $+1/2$ defect is placed at $(0.2, 0.6)$ and the $-1/2$ defect is placed at $(0.8, 0.4)$.  Note that $\bdytarget = 0$ plays no role.  In other words, the control objective is to drive $\tQ$ toward a stable configuration of a $+1/2$ and $-1/2$ defect.  In this example, we set $\domcon \equiv 0$, so we only optimize the boundary control $\bdycon$ which we enforce to be time-independent.  The initial guess for optimizing the control is the constant tensor $\bdycon = s_{*} \left[ \hat{n}_i \hat{n}_j - \delta_{ij}/2 \right]_{i,j=1}^{2}$, where $\hat{n} = (1,0)$.  In this case, the $\tQ$ state evolves toward a constant state identical to the initial boundary control, i.e. the two initial defects annihilate.

In this example, we modify the inequality constraint in \cref{eq:con_constr} to be $|\tP| \leq 0.6$ on $\Gm$.  \Cref{fig:ctrl_+/-_1/2_defect_2D_optim_history} shows the performance of our gradient descent method.  The $\bdycon$ residual is computed as in \cref{sec:ctrl_+1/2_defect_2D}.  
The computed boundary control $\bdycon$ does exhibit an active set.  Indeed, it was necessary to lower the bound to $0.6$ in order to ensure that the computed control satisfied the eigenvalue bounds described in \cref{ssec:ldg}, which in two dimensions is $-1/2 \leq \lambda_{i} (\bdycon) \leq 1/2$, for $i=1,2$.  This further emphasizes that the inequality constraint is needed to prevent computing minimizers of the objective functional that are not physical (see the discussion in \cref{sec:ctrl_+1/2_defect_2D}).

\begin{figure}%
\begin{subfigure}{.49\textwidth}
\centering
\includegraphics[width=.85\linewidth]{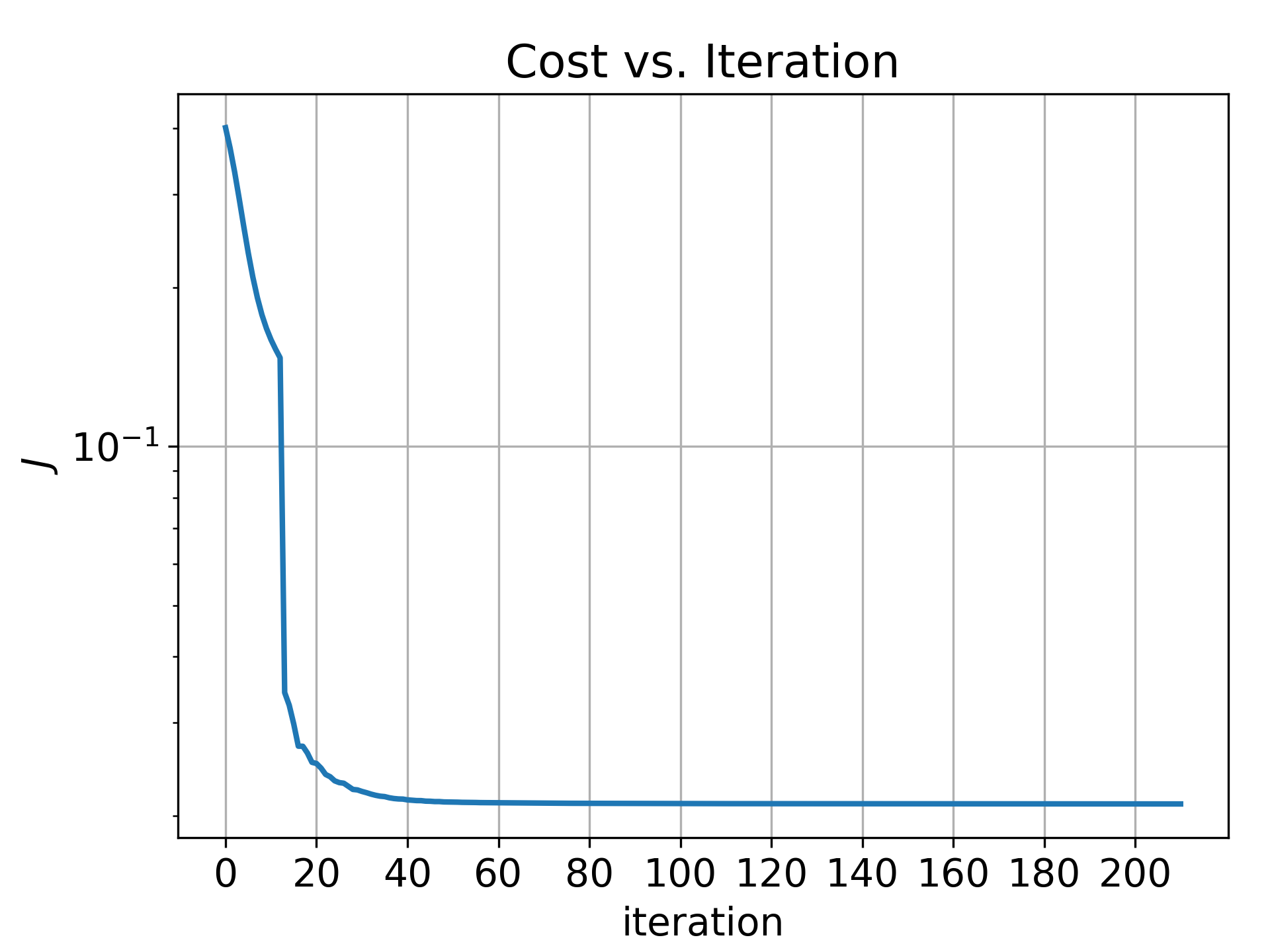}
\end{subfigure}
\begin{subfigure}{.49\textwidth}
\centering
\includegraphics[width=.85\linewidth]{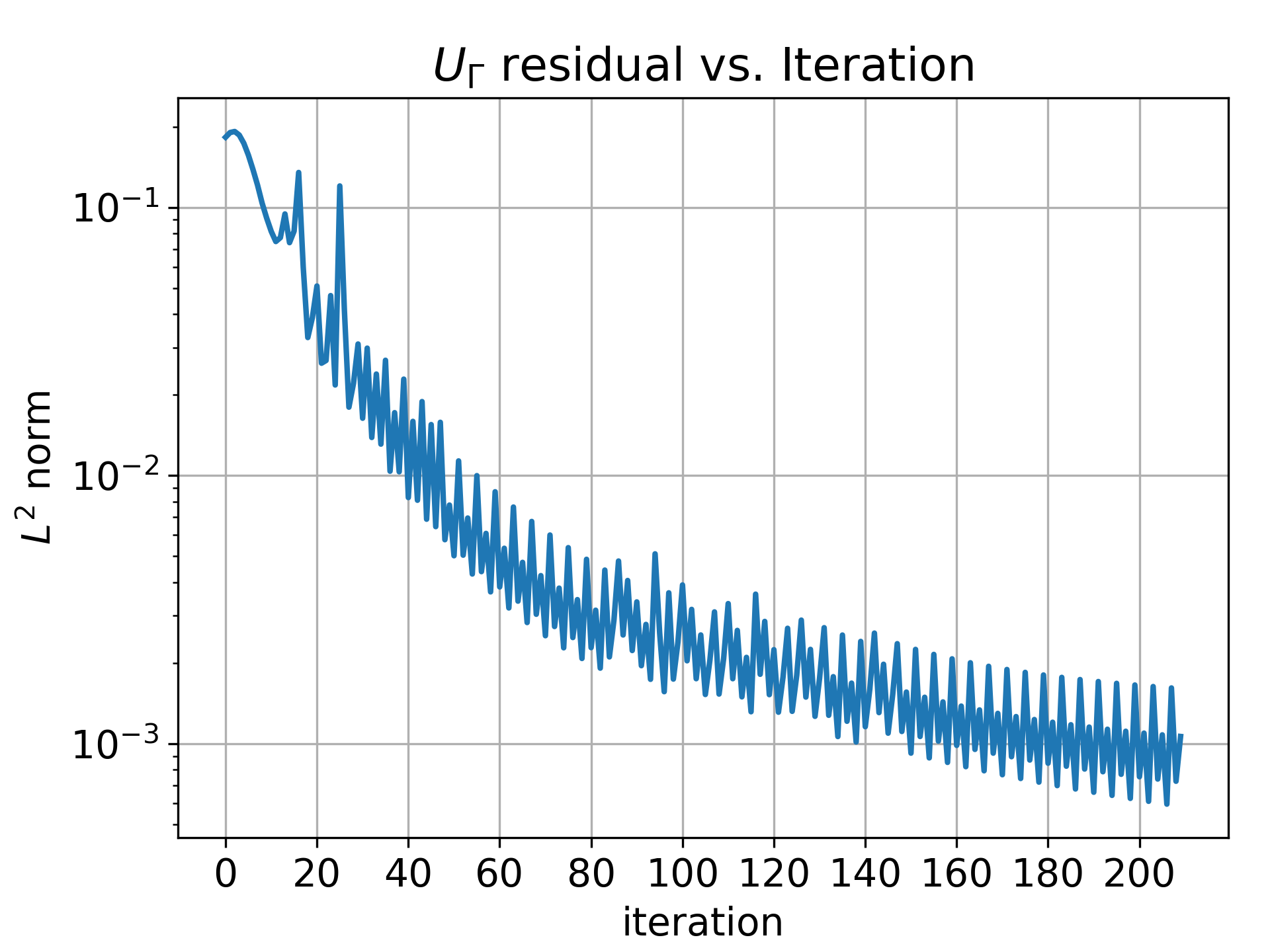}
\end{subfigure}

\caption{Optimization history (\cref{sec:prevent_+/-_1/2_defects_annihlate_2D}).  The $\bdycon$ residual is described in the text.
}
\label{fig:ctrl_+/-_1/2_defect_2D_optim_history}
\vspace{-0.2cm}
\end{figure}

\Cref{fig:ctrl_+/-_1/2_defect_2D_target_bdycon} shows the target $\domtarget$ and optimized boundary control $\bdycon$; note that the maximum value of $\lambda_{1}(\bdycon)$ is $0.42$.  
\begin{figure}%
\begin{subfigure}{.49\textwidth}
\centering
\includegraphics[width=.98\linewidth]{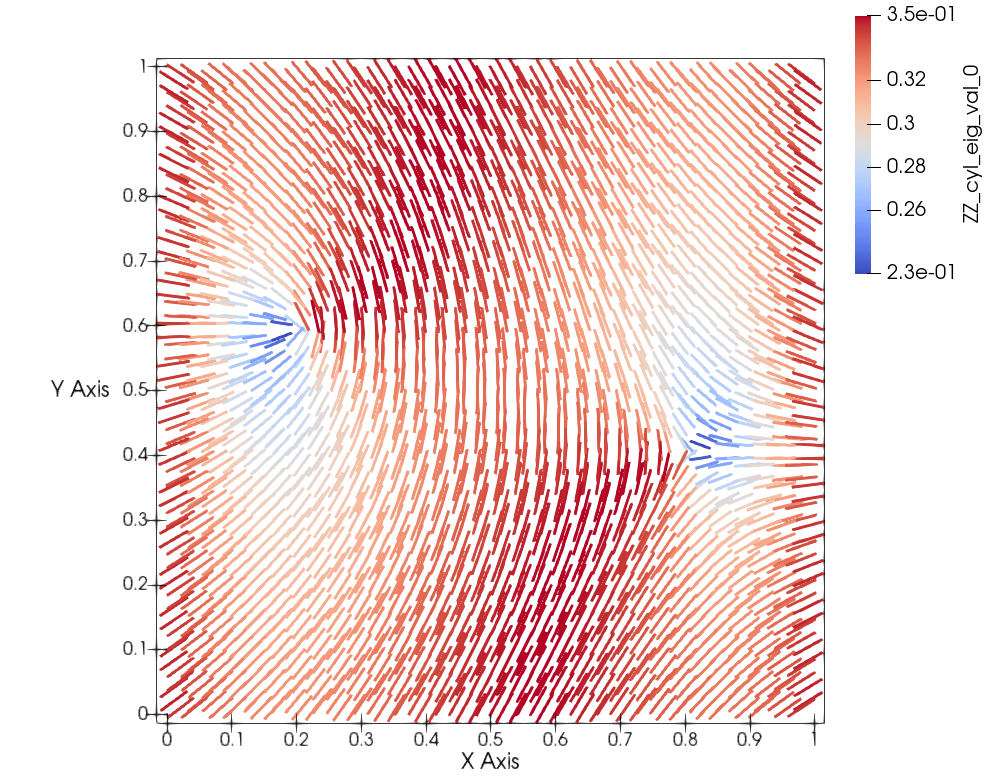}
\end{subfigure}
\begin{subfigure}{.49\textwidth}
\centering
\includegraphics[width=.98\linewidth]{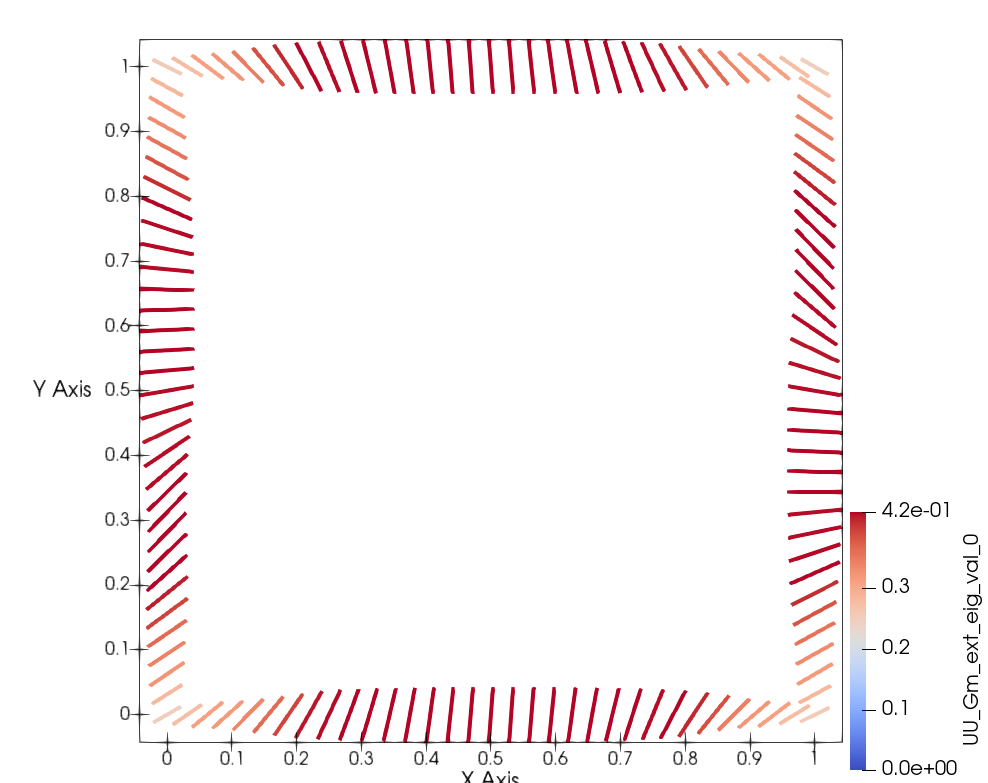}
\end{subfigure}

\caption{Target $\domtarget$ (left) and optimized boundary control $\bdycon$ (right) (\cref{sec:prevent_+/-_1/2_defects_annihlate_2D}).  We visualize $\domtarget$ by plotting line segments that correspond to the eigenvector of $\domtarget$ with maximum eigenvalue; $\bdycon$ is visualized similarly.  Note how the boundary control mimics the boundary conditions of the target.
}
\label{fig:ctrl_+/-_1/2_defect_2D_target_bdycon}
\vspace{-0.2cm}
\end{figure}
\Cref{fig:ctrl_+/-_1/2_defect_2D_Q_0_tf} shows the initial and final state of $\tQ$ that clearly demonstrates the efficacy of the control.
\begin{figure}%
\begin{subfigure}{.49\textwidth}
\centering
\includegraphics[width=.98\linewidth]{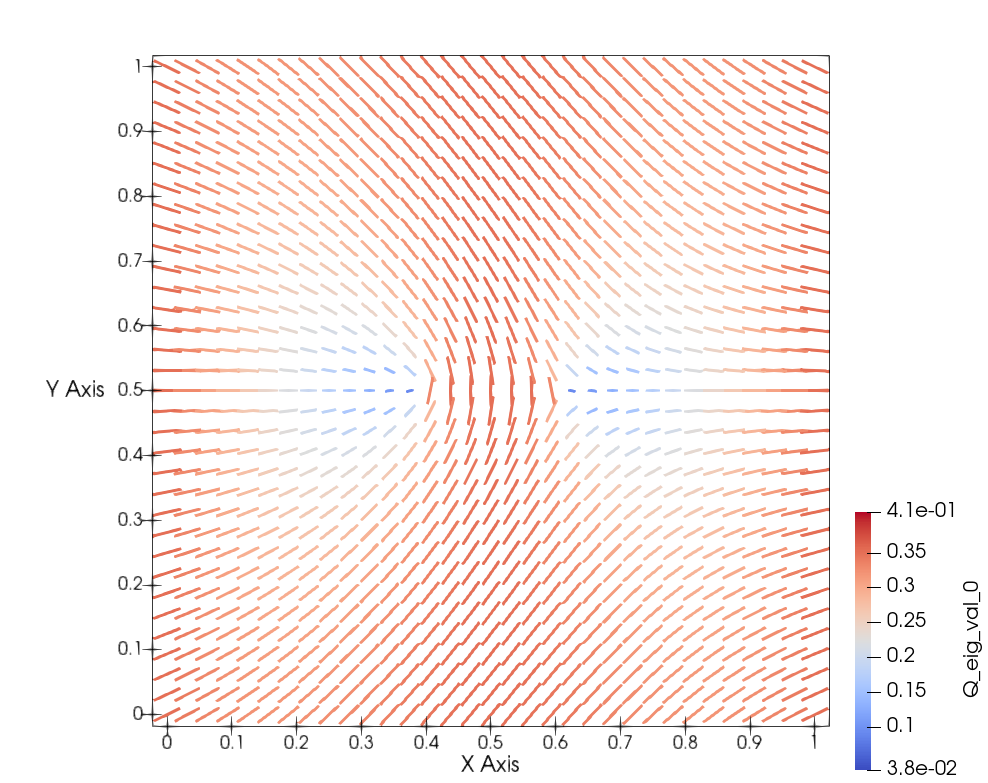}
\end{subfigure}
\begin{subfigure}{.49\textwidth}
\centering
\includegraphics[width=.98\linewidth]{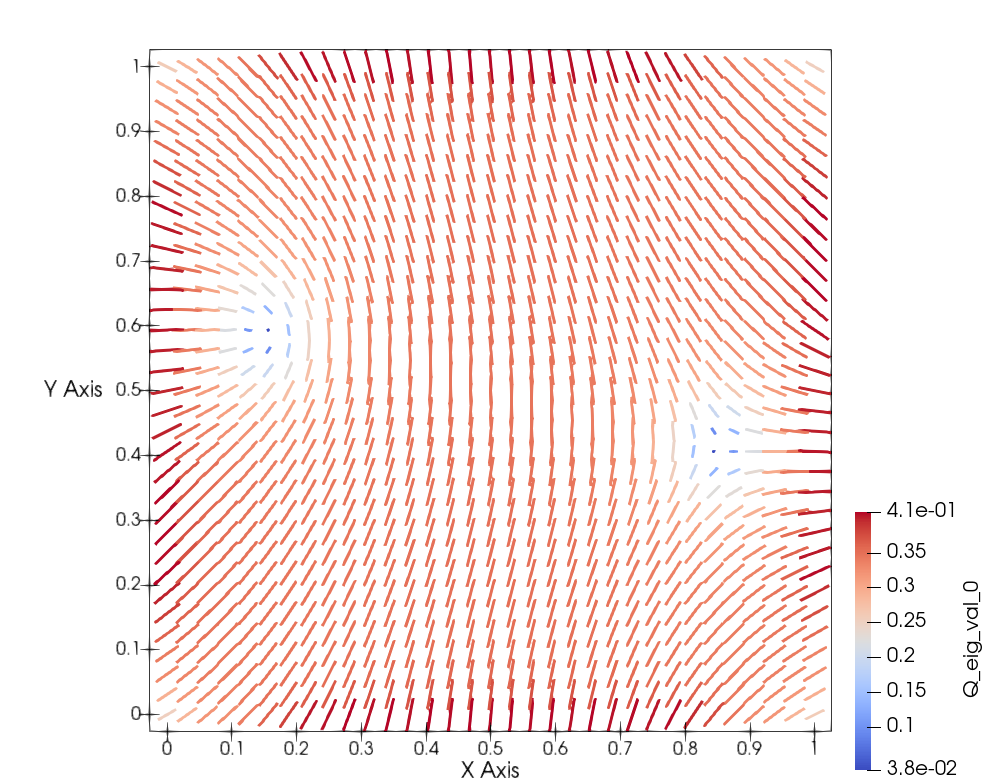}
\end{subfigure}

\caption{Initial state for $\tQ(t=0)$ (left) and final state for $\tQ(t=\tfinal)$ (right) (\cref{sec:prevent_+/-_1/2_defects_annihlate_2D}). 
Line segments correspond to the eigenvector of $\tQ$ with maximum eigenvalue; the color scale is based on the maximum eigenvalue.  
The position of the point defect (at the final time) is almost exactly the same as in the target.
}
\label{fig:ctrl_+/-_1/2_defect_2D_Q_0_tf}
\vspace{-0.2cm}
\end{figure}

\subsection{Control of a $+1/2$ line defect in three dimensions}\label{sec:ctrl_+1/2_curve_defect_3D}

The domain is the unit cube $\Om = (0,1)^3$ and the parameters of the forward problem are as follows.  The coefficients of the double well in \cref{eqn:Landau-deGennes_bulk_potential} are
\vspace{-0.2cm}
\begin{equation}\label{eqn:+1/2_curve_defect_3D_bulk_pot_coefs}
\begin{split}
	\bulkK = 1, \quad \bulkA = 7.5021037403, \quad \bulkB = 60.975813166, \quad \bulkC = 66.519068908,
\end{split}
\end{equation}
and $\bulkfunc(\tQ)$ has a global minimum at $\tQ_{*} = s_{*} \left[ n_i n_j - \delta_{ij}/3 \right]_{i,j=1}^{3}$, where $n \in \R^3$ is any unit vector, and $s_{*} = 0.700005531$.  The other coefficients are given by $\bulketa = 0.2$, $\domcoef = 0$, $\anchcoef = 100$.

The initial condition was defined as follows.  First, let $n = n(x_1,x_2)$ be given by
\vspace{-0.2cm}
\begin{equation}\label{eqn:+1/2_degree_curve_defect}
\begin{split}
	n &:= \left( \cos \frac{\theta[0.5,0.5]}{2}, \sin \frac{\theta[0.5,0.5]}{2}, 0 \right), \quad \theta[a,b](x_1,x_2) := \mbox{atan2} \left(\frac{x_2-b}{x_1-a}\right),
\end{split}
\end{equation}
similar to \cref{eqn:+1/2_degree_defect}.  In other words, $n \otimes n$ corresponds to a $+1/2$ degree defect, in any plane parallel to the $x_3=0$ plane, centered at $(0.5,0.5)$.  Then, we have
\vspace{-0.2cm}
\begin{equation}\label{eqn:+1/2_curve_defect_3D_init_cond}
\begin{split}
	\tQ^{0} := \frac{r^2[0.5,0.5]}{r^2[0.5,0.5] + \delta^2} s_{*} \left[ n_i n_j - \delta_{ij}/3 \right]_{i,j=1}^{3},
\end{split}
\end{equation}
where $\delta = \bulketa / 4$; this ensures that $\tQ^{0} \in H^1(\Om;\symmtraceless) \cap C^0(\Om;\symmtraceless)$.  
The final time is $\tfinal = 0.3$ and the time-step is $\dt = 0.006$.

The control parameters in \cref{eq:optconprob} are the same as in \cref{eqn:+1/2_defect_2D_ctrl_param}.  
%\begin{equation}\label{eqn:+1/2_curve_defect_3D_ctrl_param}
%\begin{split}
%\beta_{\domcyl} = 1.0, \quad \beta_{\bdycyl} = 0.0, \quad \beta_{\tfinal} = 1.0, \quad \alpha_{\domcyl} = 0.0, \quad \alpha_{\bdycyl} = 0.01,
%\end{split}
%\end{equation}
The targets are defined through a parameterized curve in $\R^3$, denoted $\left( \tilde{x}_1 (\xi), \tilde{x}_2 (\xi), \tilde{x}_3 (\xi) \right)$, given by
\vspace{-0.2cm}
\begin{equation}\label{eqn:+1/2_curve_defect_3D_param_curve}
\begin{split}
	f(\xi) &:= l_2 \xi^2 + l_3 \xi^3, \quad l_2 = 3 c_0, ~ l_3 = -2 c_0, ~ c_0 = 0.6, \\
	\left( \tilde{x}_1 (\xi), \tilde{x}_2 (\xi), \tilde{x}_3 (\xi) \right) &:= \left( f(\xi) + 0.2 , f(\xi) + 0.2, \xi \right), \text{ for } 0 \leq \xi \leq 1.
\end{split}
\end{equation}
Next, we define $\tilde{r}^2(x_1,x_2,x_3) := \left( x_1-\tilde{x}_1 (x_3) \right)^2 + \left( x_2-\tilde{x}_2 (x_3) \right)^2$,
\vspace{-0.2cm}
\begin{equation}\label{eqn:+1/2_curve_3D_angle_and_vec}
\begin{split}
	\tilde{\theta}(x_1,x_2,x_3) &:= \mbox{atan2} \left(\frac{x_2-\tilde{x}_2 (x_3)}{x_1-\tilde{x}_1 (x_3)}\right), \quad z = \left( \cos \frac{\tilde{\theta}}{2}, \sin \frac{\tilde{\theta}}{2} , 0 \right),
\end{split}
\end{equation}
and the targets are given by
\vspace{-0.2cm}
\begin{equation}\label{eqn:+1/2_curve_defect_3D_ctrl_targets}
\begin{split}
	\domtarget = \fintarget = \frac{\tilde{r}^2}{\tilde{r}^2 + \delta^2} s_{*} \left[ z_i z_j - \delta_{ij}/3 \right]_{i,j=1}^{3}, \quad \bdytarget = 0.
\end{split}
\end{equation}
In other words, the control objective is to drive $\tQ$ toward a state that has a $+1/2$ degree defect, with respect to the $x_3 = l_0$ plane, located at $(\tilde{x}_1 (l_0), \tilde{x}_2 (l_0), l_0)$.

In this example, we set $\domcon \equiv 0$, so we only optimize the boundary control $\bdycon$ which we enforce to be time-independent.  The initial guess for optimizing the control is given by setting $\bdycon = \tQ^{0}$.

\Cref{fig:ctrl_+1/2_curve_defect_3D_optim_history} shows the performance of our gradient descent method.  The $\bdycon$ residual is computed as in \cref{sec:ctrl_+1/2_defect_2D}.  
The computed boundary controls $\bdycon$ at later iterations do not exhibit any active set, i.e. the inequality constraint is not active.

\begin{figure}%
\begin{subfigure}{.49\textwidth}
\centering
\includegraphics[width=.85\linewidth]{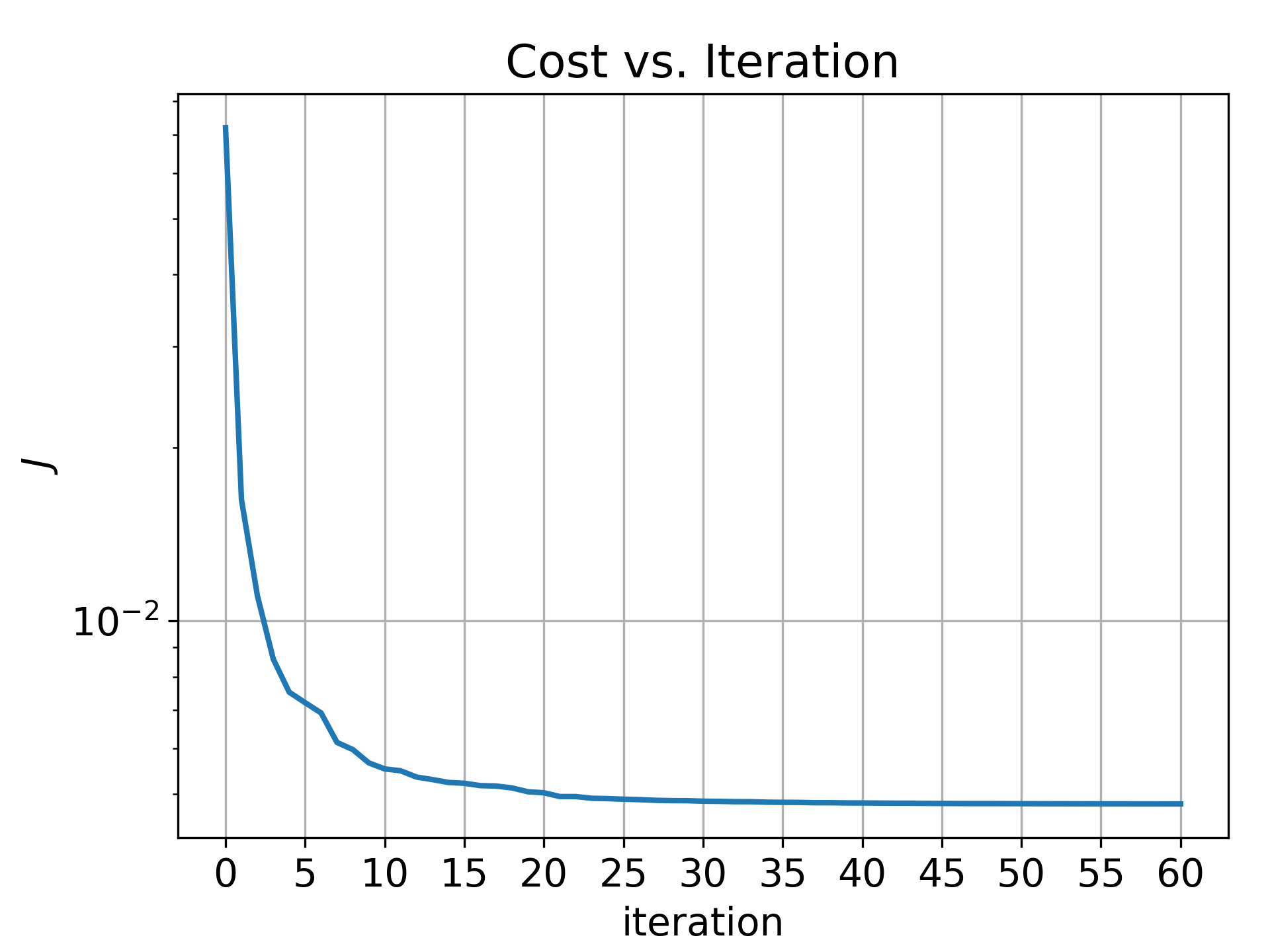}
\end{subfigure}
\begin{subfigure}{.49\textwidth}
\centering
\includegraphics[width=.85\linewidth]{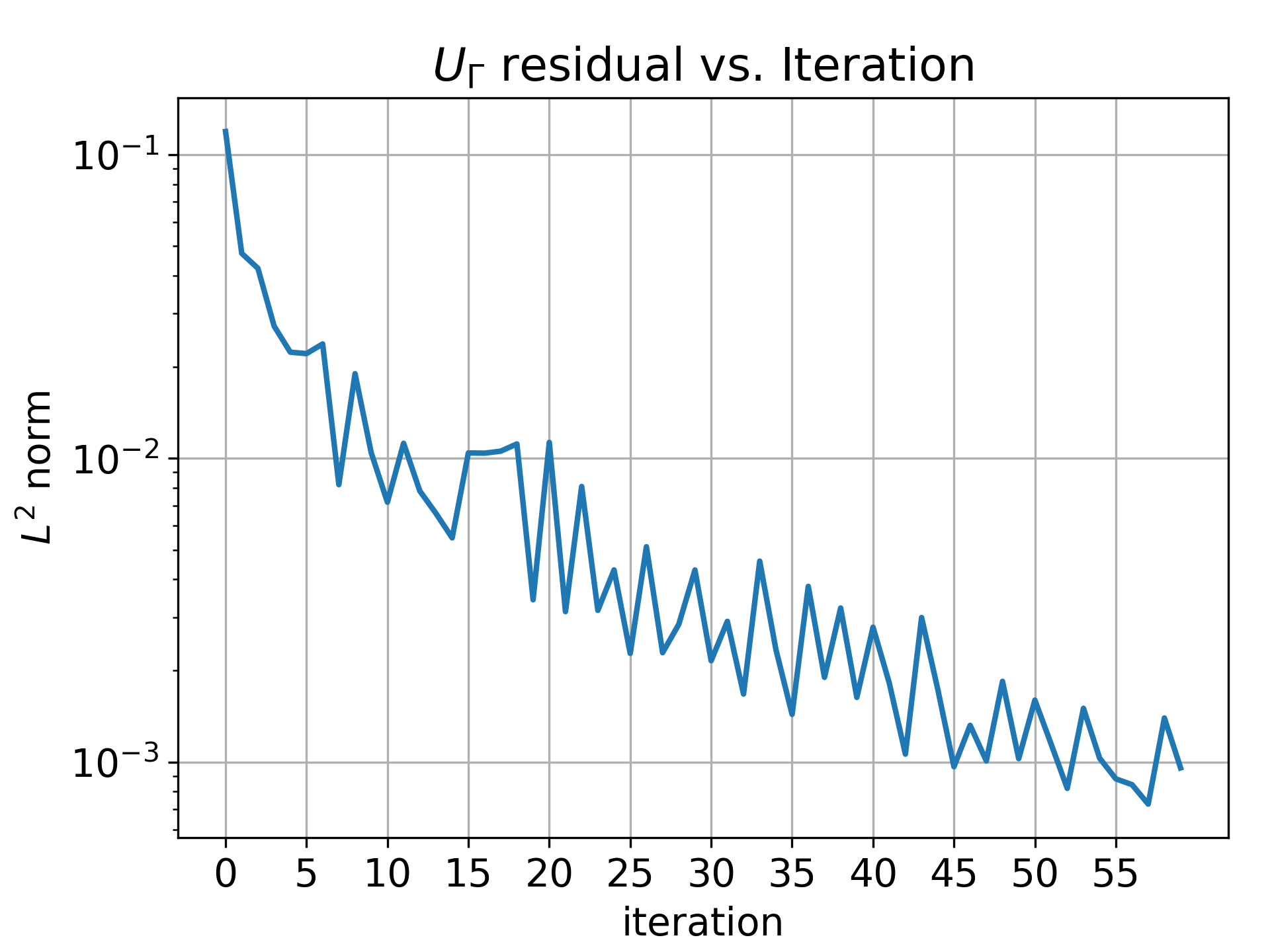}
\end{subfigure}

\caption{Optimization history (\cref{sec:ctrl_+1/2_curve_defect_3D}).  The $\bdycon$ residual is described in the text.
}
\label{fig:ctrl_+1/2_curve_defect_3D_optim_history}
\vspace{-0.2cm}
\end{figure}

\Cref{fig:ctrl_+1/2_curve_defect_3D_target_bdycon} shows the target $\domtarget$ and optimized boundary control $\bdycon$.  We note, however, that the most negative eigenvalue, $\lambda_{3} (\tQ)$ (not plotted), is approximately $-0.33$ at the core of the defect in $\bdycon$ on the $x_{3} = 1$ side of the cube (see middle plot of \cref{fig:ctrl_+1/2_curve_defect_3D_target_bdycon}).  
Again, it is necessary to enforce the inequality constraint during the line-search in order to prevent computing minimizers of the objective functional that are not physical (see the discussion in \cref{sec:ctrl_+1/2_defect_2D}).  
\begin{figure}%
\begin{subfigure}{.32\textwidth}
\centering
\includegraphics[width=.98\linewidth]{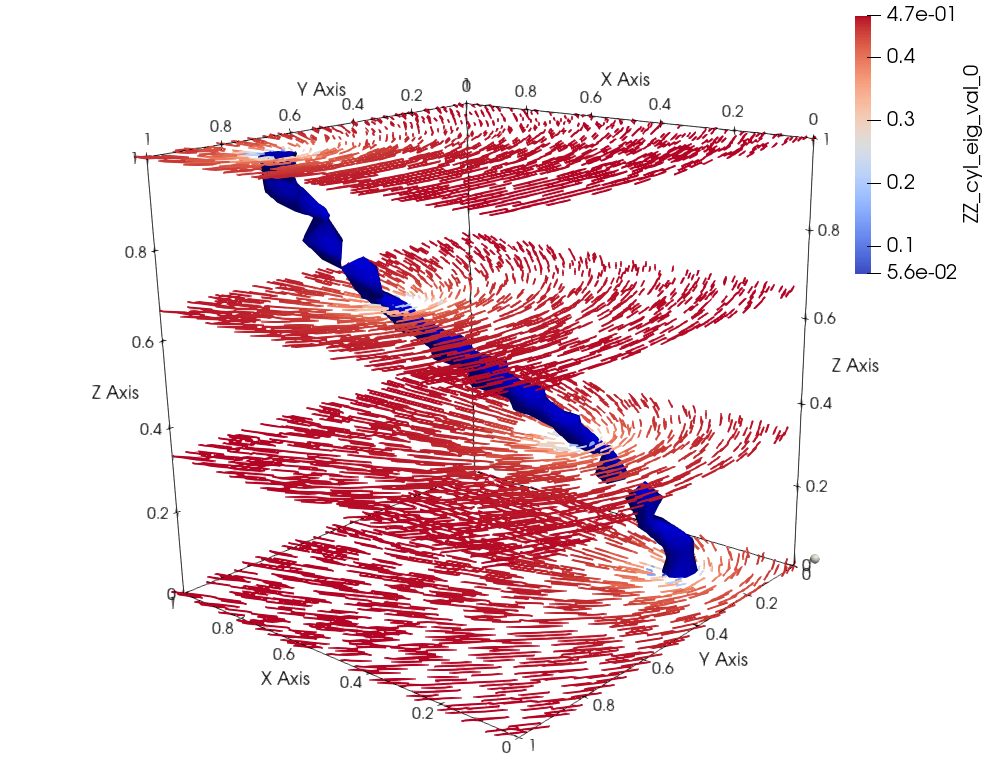}
\end{subfigure}
\begin{subfigure}{.32\textwidth}
\centering
\includegraphics[width=.98\linewidth]{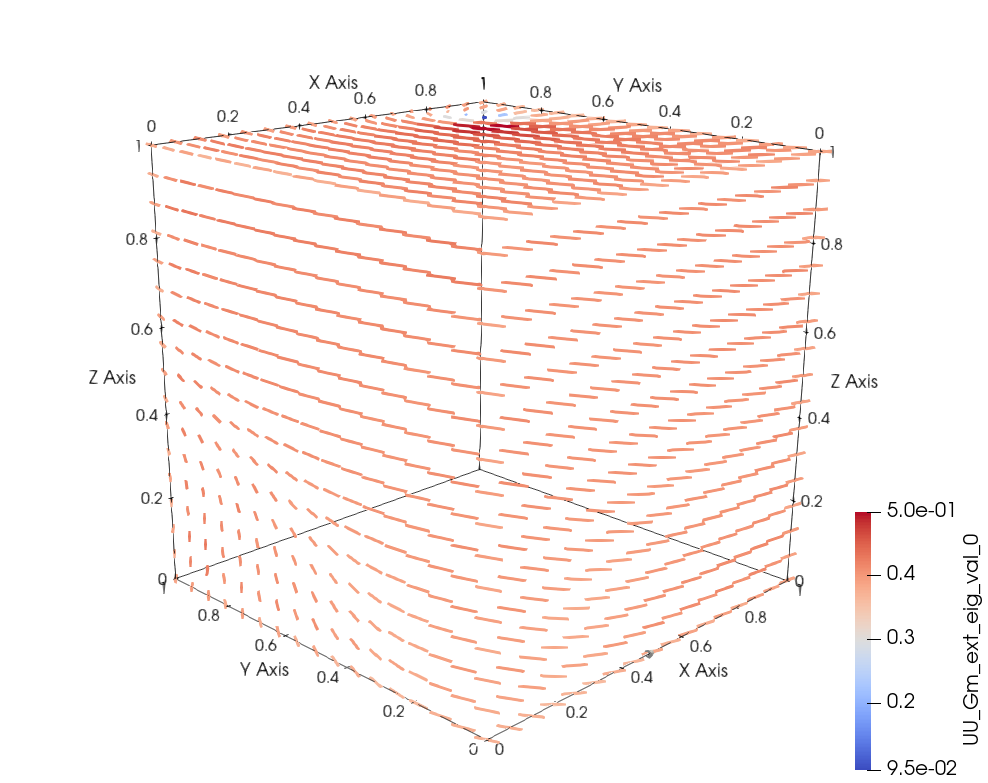}
\end{subfigure}
\begin{subfigure}{.32\textwidth}
\centering
\includegraphics[width=.98\linewidth]{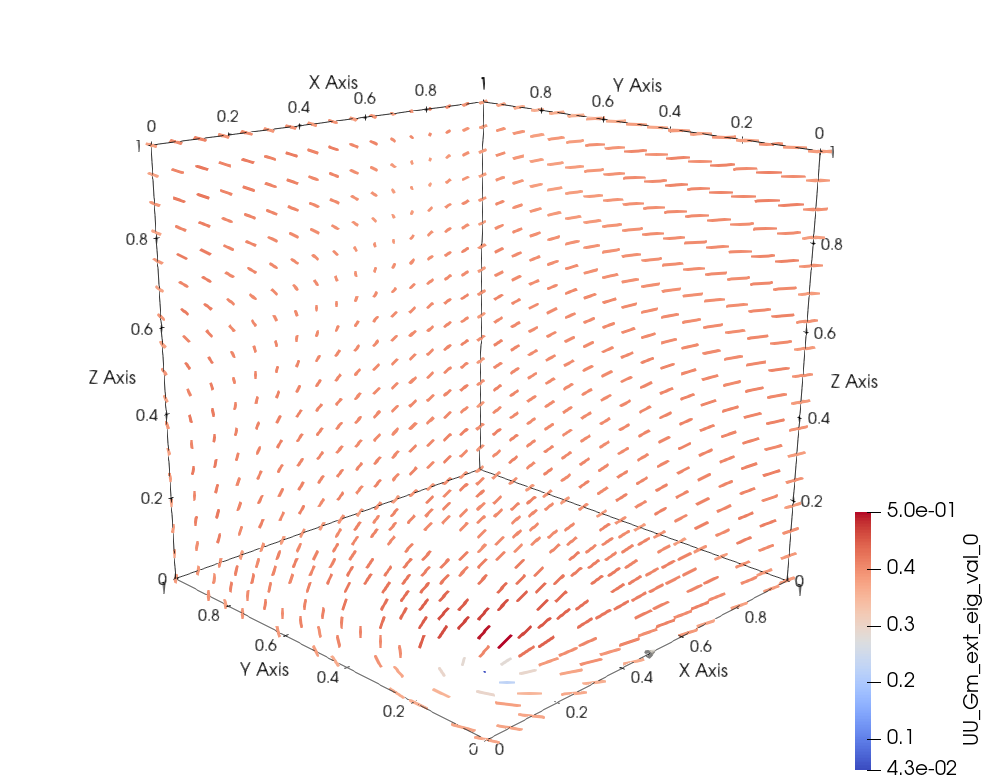}
\end{subfigure}

\caption{Target $\domtarget$ (left) and optimized boundary control $\bdycon$ (middle, right) (\cref{sec:ctrl_+1/2_curve_defect_3D}).  We visualize $\domtarget$ by plotting line segments that correspond to the eigenvector of $\domtarget$ with maximum eigenvalue; $\bdycon$ is visualized similarly.  Middle (Right) view shows the front (back) three faces.
}
\label{fig:ctrl_+1/2_curve_defect_3D_target_bdycon}
\vspace{-0.3cm}
\end{figure}
\Cref{fig:ctrl_+1/2_curve_defect_3D_Q_0_tf} shows the initial and final state of $\tQ$ that clearly demonstrates the efficacy of the control.
\begin{figure}%
\begin{subfigure}{.32\textwidth}
\centering
\includegraphics[width=.99\linewidth]{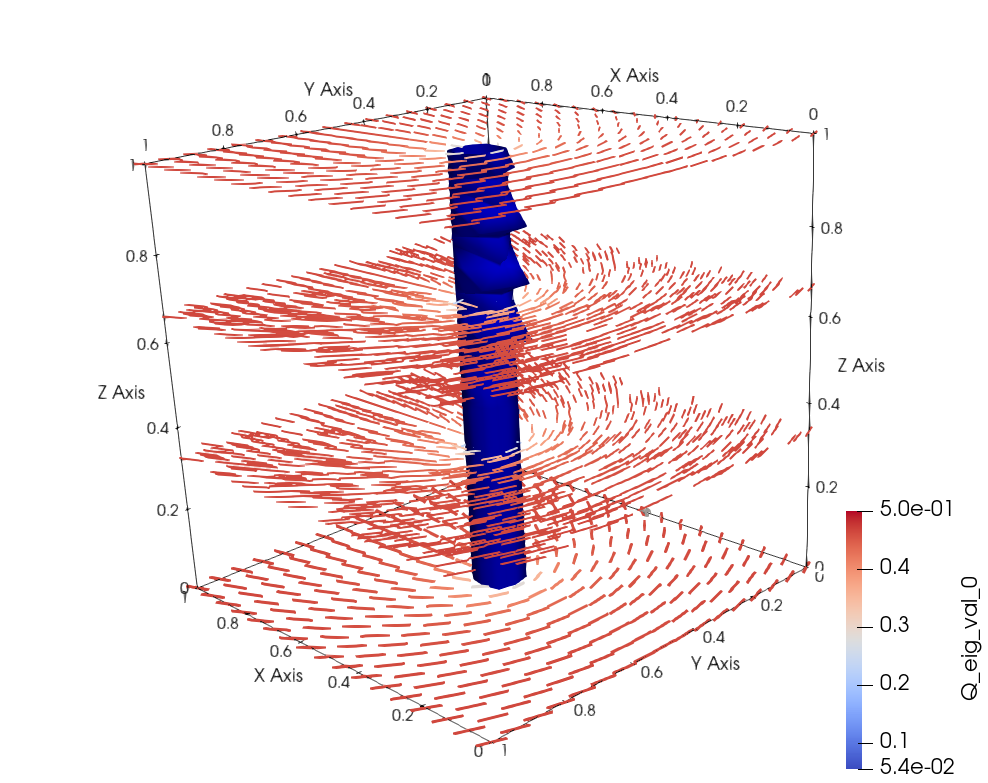}
\end{subfigure}
\begin{subfigure}{.32\textwidth}
\centering
\includegraphics[width=.99\linewidth]{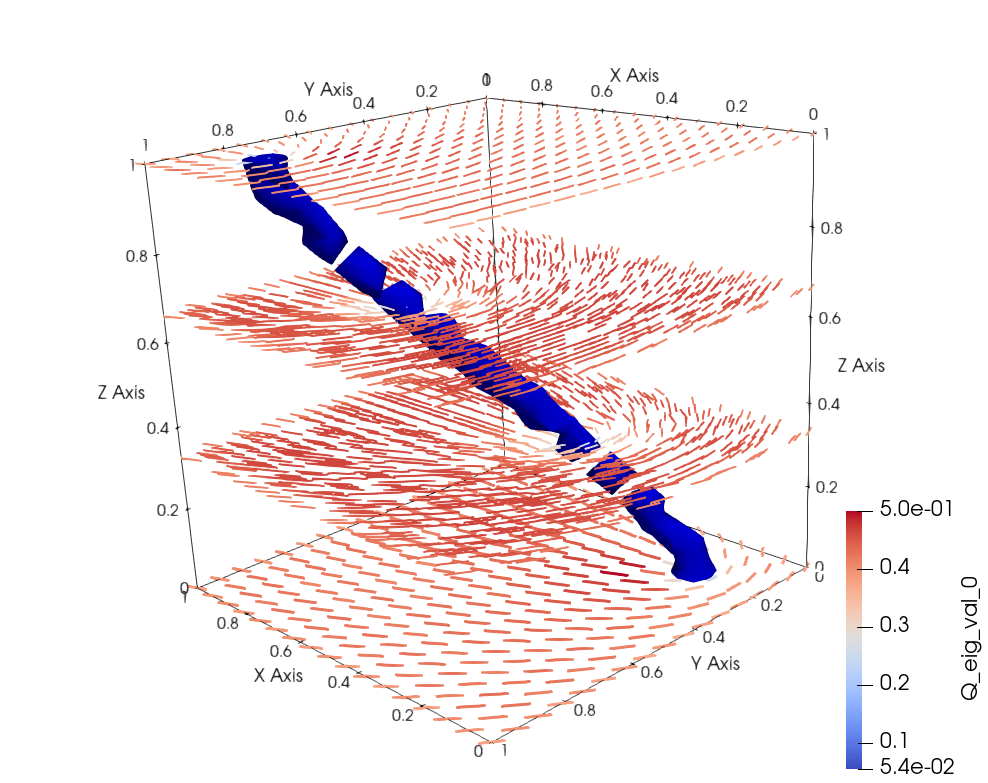}
\end{subfigure}
\begin{subfigure}{.32\textwidth}
\centering
\includegraphics[width=.98\linewidth]{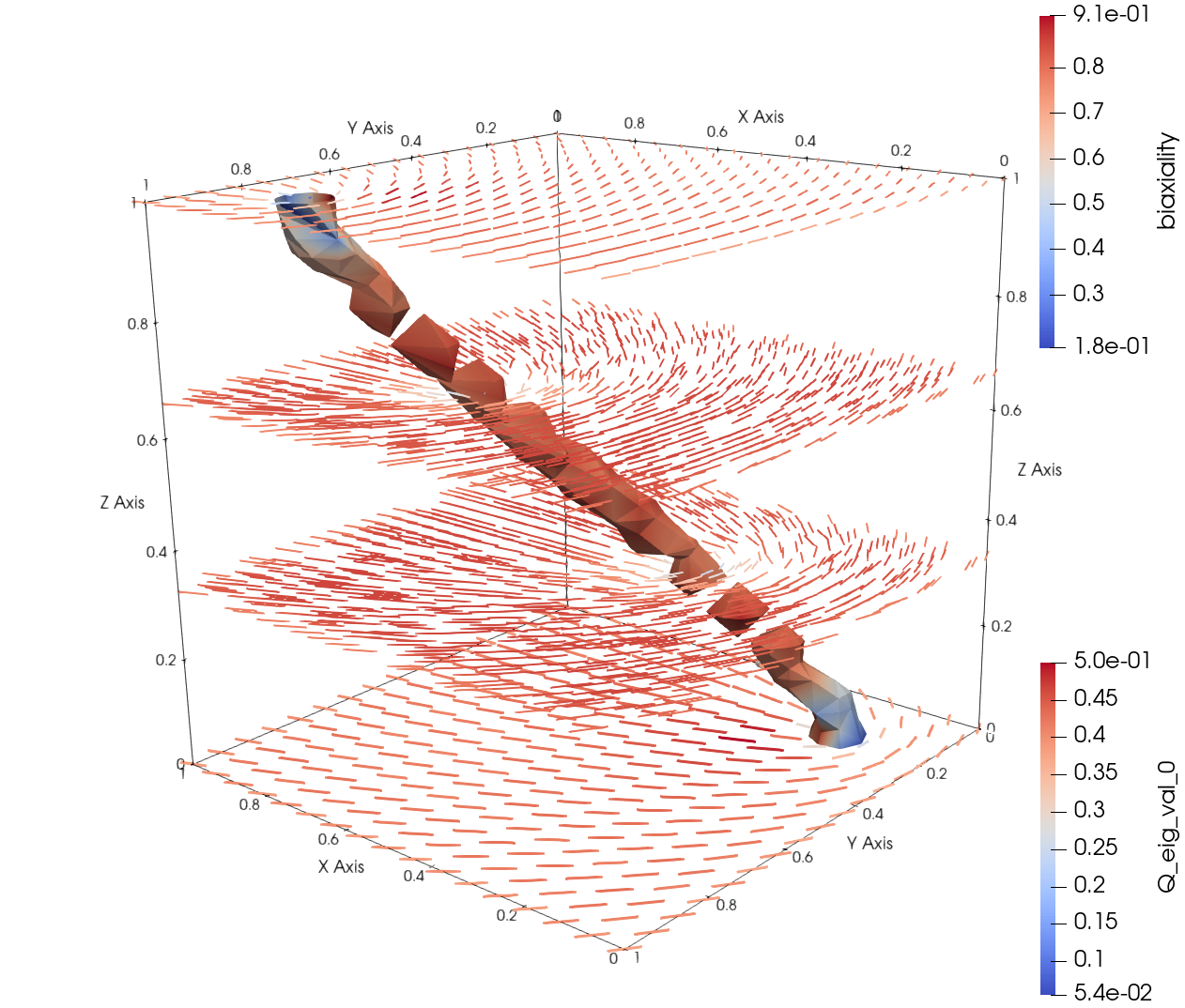}
\end{subfigure}

\caption{Initial state for $\tQ(t=0)$ (left) and final state for $\tQ(t=\tfinal)$ (middle) (\cref{sec:ctrl_+1/2_curve_defect_3D}). 
Line segments correspond to the eigenvector of $\tQ$ with maximum eigenvalue; the color scale is based on the maximum eigenvalue.  
The position of the line defect (at the final time) is very close to that in the target.  {Right plot: the color scale corresponds to the biaxiality measure (see \cref{rem:biaxiality}).  Away from the defect (not shown), $\tQ$ is essentially uniaxial with $\beta_{\mathrm{bi}}(\tQ) \approx 0$.}
}
\label{fig:ctrl_+1/2_curve_defect_3D_Q_0_tf}
\vspace{-0.3cm}
\end{figure}

\begin{remark}\label{rem:biaxiality}
{In dimension $d=2$, all $\tQ$-tensors have a uniaxial form.  For $d=3$, $\tQ$ is uniaxial if and only if $\tQ$ has two repeated eigenvalues \cite{Virga_book1994}.  Moreover, even if the initial condition $\tQ_{0}$ is uniaxial, the solution $\tQ(t,\cdot)$ of \cref{eq:forward_problem} will not be uniaxial in general, i.e. it will become biaxial with three distinct eigenvalues.  Typically, the solution is only biaxial near a defect; away from a defect, it is (essentially) uniaxial because of the global minimum properties of the bulk potential in \cref{eqn:Landau-deGennes_bulk_potential} (see \cite{Mottram_arXiv2014}).}

{The right plot of \cref{fig:ctrl_+1/2_curve_defect_3D_Q_0_tf} shows the normalized biaxiality measure \cite{Majumdar_ARMA2010}, $\beta_{\mathrm{bi}}(\tQ)$, of the solution $\tQ$ on the iso-surface surrounding the line defect.  Note that $0 \leq \beta_{\mathrm{bi}}(\tQ) \leq 1$, with $\beta_{\mathrm{bi}}(\tQ) = 0$ corresponding to a uniaxial state and $\beta_{\mathrm{bi}}(\tQ) = 1$ representing ``maximal'' biaxiality.}
\end{remark}

%---------------------------------------------------------------------------
\section{Conclusions}\label{sec:conclusion}
%---------------------------------------------------------------------------

The main contribution of this paper is to show that optimal control of LC devices, in the framework of the LdG model, is possible.  Indeed, our numerical study demonstrates this effectively by directly controlling the placement of defects, which is of considerable interest in the LC scientific community.  We only consider boundary controls in our numerical study since that is most relevant in applications.  
Further extensions of our framework, as related to actual LC systems, would involve controls that are either finite-dimensional (in space) or with a special restriction on the admissible controls, e.g. homeotropic versus planar anchoring for boundary controls.

From an analytical standpoint, by restricting our study to the one-parameter model (i.e. the only non-zero elastic constant is $\Li_{1} = 1$), we were able to exploit a large number of derivation techniques for the optimal control of scalar Allen-Cahn equations. The rigorous proofs for the bounds and energy estimates in the tensor-valued setting have therefore been relegated to appendices. Nevertheless, there remain a number of analytical challenges if we wish to go beyond the one-parameter model.  
For example, our current proof of continuity in space-time may
only work in the current setting and new techniques or regularity results also appear necessary.  This is because for more general elastic constants, the Laplacian in \cref{eq:forward_problem} is replaced by a more general elliptic operator that fully couples all components of the $\tQ$-tensor.

Finally, our numerical study made use of a  basic optimization algorithm.  A more advanced scheme, e.g., one based on second-order information would require an additional sensitivity result to derive an analytical formula for second-order directional derivatives (Hessian-vector products) for use in Newton-type methods.  At least for the bulk energy term considered here, such a result should be obtainable by modifying the proof of differentiability in \cref{sec:first_order}.

\appendix

\vspace{-0.1cm}

\section[Proof of Thm 4]{Proof of \cref{prop:disc-w-estm}}\label{app:prop.3.7}
We start by using the test function $\tP = \tQ^{n}_{t}(s)$ for all $s \in [0,\tfinal]$ in \eqref{eq:weak_discrete}. This leads to
\vspace{-0.2cm}
\begin{multline}\label{eq:sec_apriori_est_1}
\|\tQ^{n}_{t}(s)\|^2_{\Hs}
+ 
\frac{d}{dt} \| \nabla \tQ^{n}(s)\|^2_{\Hs}
+
\frac{\anchcoef}{2} \frac{d}{dt} \|\tQ^{n}(s)\|^2_{\HGs} + \frac{1}{\bulketa^2} \frac{d}{dt}  \int_{\Omega} \bulkfunc(\tQ^{n}(s)) \\
= \domcoef \inner{\domcon(s)}{\tQ^{n}_{t}(s)}{\Hs} + \anchcoef \inner{\bdycon(s)}{\tQ^{n}_{t}(s)}{\HGs}.
\end{multline}
{\color{black}
%We simplicity define $M_2 : \R^4 \to \R$ for arbitrary  $\delta_1,\dots,\delta_4 > 0$ by 
%\vspace{-0.2cm} 
%\[
%M_2(x_1,x_2,x_3,x_4) = 
%\delta_1x_1
%+
%\delta_2 x_2
%+
%\delta_3 x_3
%+
%\delta_4 x_4.
%\]
%for arbitrary positive constants $\delta_1,\dots,\delta_4$.
%We will use 
%$
%M_2 = M_2(
%\| \bdycon(t) \|^2_{\HGs},
%\| \bdycon(0)\|^2_{\HGs},
%\|(\bdycon)_t \|^2_{L^2(\bdycyl_t)}, 
%\| \domcon \|^2_{L^2(\domcyl_t)}
%)
%$
%below.
%\[
%\frac{\delta_1}{2} \| \bdycon(t) \|^2_{\HGs}  
%+
% \frac{\delta_2}{2} \| \bdycon(0) \|^2_{\HGs} 
%+
%\frac{\delta_3}{2} \| (\bdycon)_t \|^2_{L^2(\bdycyl_t)}
%\le
%c_{1}
%\]
%there exists $c_{1} > 0 $ (independent of $n$) such that 
%\[
%\frac{\delta_1}{2} \| \bdycon(t) \|^2_{\HGs}  
%+
% \frac{\delta_2}{2} \| \bdycon(0) \|^2_{\HGs} 
%+
%\frac{\delta_3}{2} \| (\bdycon)_t \|^2_{L^2(\bdycyl_t)}
%\le
%c_{1}
%\]
We continue \eqref{eq:sec_apriori_est_1} by using
$\bdycon \in H^1(0,\tfinal;\HGs)$,
integrating from  $0$ to $t \in (0,\tfinal]$, and rearranging terms
to obtain new constants $\delta'_1,\dots,\delta'_4 > 0$:
}
%Returning to \eqref{eq:sec_apriori_est_1}, 
%\vspace{-0.2cm}
%\begin{multline}\label{eq:sec_apriori_est_2}
%\|\tQ^{n}_{t}\|^2_{L^2(\domcyl_{t})}
%+ 
% \| \nabla \tQ^{n}(t)\|^2_{\Hs}
% -
%  \| \nabla \tQ^{n}_{0}\|^2_{\Hs}
%+
%\frac{\anchcoef}{2} \|\tQ^{n}(t)\|^2_{\HGs}
%-
%\frac{\anchcoef}{2} \|\tQ^{n}_{0}\|^2_{\HGs} \\ 
%+ \frac{1}{\bulketa^2}\int_{\Omega} \bulkfunc(\tQ^{n}(t))
%-
%\frac{1}{\bulketa^2}\int_{\Omega} \bulkfunc(\tQ^{n}_{0})\\
%%\le \domcoef \int_{0}^t \inner{\domcon}{\tQ^{n}_{t}}{\Hs} + 
%%\anchcoef \int_{0}^t \inner{\bdycon}{\tQ^{n}_{t}}{\HGs} \\
%%\frac{\delta_4\domcoef}{2} \| \domcon \|^2_{L^2(\domcyl_{t})} 
%%+ 
%%c_1
%\le M_2
%%(
%%\| \bdycon(t) \|^2_{\HGs},
%%\| \bdycon(0)\|^2_{\HGs},
%%\|(\bdycon)_t \|^2_{L^2(\bdycyl_t)}, 
%%\| \domcon \|^2_{L^2(\domcyl_t)}
%%)
%+\frac{\domcoef}{2 \delta_4} \| \tQ^{n}_{t} \|^2_{L^2(\domcyl_{t})}
%+
%\frac{1}{2\delta_1} \|\tQ^{n}(t) \|^2_{\HGs} +
%\frac{1}{2\delta_2} \|\tQ^{n}_{0} \|^2_{\HGs} + 
%\frac{1}{2\delta_3} \|\tQ^{n}\|^2_{L^2(\domcyl_{t})}.
%%\frac{\delta_1 \domcoef}{2} \|\tQ^{n}_{t}\|^2_{L^2(\domcyl_{t})} 
%%+ 
%%\frac{\delta_2 \anchcoef}{2} \|\tQ^{n}_{t}\|^2_{L^2(\bdycyl_t)} +  c_u,
%\end{multline}
%where $\delta_4 > 0$ is arbitrary and independent of $n$.
%As with $M_1$ above, we will leave off the arguments of $M_2$ when it is clear in context.
%Next, by appropriately choosing each $\delta_i > 0$, we obtain new constants $\delta'_i > 0$, $i =1,\dots,4$ such that
{\color{black} 
%We then rearrange terms and obtain new constraints $\delta'_1,\dots,\delta'_4$:
}
\vspace{-0.4cm}
\begin{multline}\label{eq:sec_apriori_est_3}
\delta'_{4} \|\tQ^{n}_{t}\|^2_{L^2(\domcyl_{t})}
+ 
 \| \nabla \tQ^{n}(t)\|^2_{\Hs}
+
\frac{1}{\bulketa^2}\int_{\Omega} \bulkfunc(\tQ^{n}(t))
+
\delta'_1\|\tQ^{n}(t)\|^2_{\HGs}
\le\\
  \| \nabla \tQ^{n}_{0}\|^2_{\Hs} 
+
\frac{1}{\bulketa^2}\int_{\Omega} \bulkfunc(\tQ^{n}_{0}) 
+
\delta'_2 \|\tQ^{n}_{0}\|^2_{\HGs}  
+
\delta'_3 \| \tQ^{n}\|^2_{L^2(\domcyl_{t})}
+  
M_2
%(
%\| \bdycon(t) \|^2_{\HGs},
%\| \bdycon(0)\|^2_{\HGs},
%\|(\bdycon)_t \|^2_{L^2(\bdycyl_t)}, 
%\|u\|^2_{L^2(\domcyl_t)}
%)
.
\end{multline}
{\color{black}
We can bound the penultimate term in \eqref{eq:sec_apriori_est_3} by  
%This bound can be extended by 
applying \eqref{eq:first_apriori_5} and
\vspace{-0.2cm}
\[
\|\tQ^{n}\|_{ L^2(\domcyl_{t})}^2
\le 
\| \tQ^{n}\|_{ L^2(\domcyl) }^2
\le 
c_{\rm emb} 
\| \tQ^{n}\|_{L^2(0,\tfinal;\Vs)}^2
\le
c_{\rm emb} M_1,
\]
where $c_{\rm emb}$ is an embedding constant. 
%This gives us a new constant $c'_u > 0$ such that 
%\[
%\delta'_3 \| \tQ^{n}\|^2_{L^2(\domcyl_{t})}
%+  c_u
%\le 
%c'_u,
%\]
%independently of $n$.
%This gives us the bound 
%%\vspace{-0.2cm}
%$
%\delta'_3 \| \tQ^{n}\|^2_{L^2(\domcyl_{t})}
%\le
%M_1,
%$
%where 
$M_1$  absorbs $\delta'_3$ and $c_{\rm emb}$ below. }

Based on the order of the nonlinearity $\bulkfunc$, the continuity of the trace operator, and the convex splitting $\bulkfunc = \bulkimp - \bulkexp$, there is a constant $M_0 \ge 0$ such that
\vspace{-0.2cm}
\[
  \| \nabla \tQ^{n}_{0}\|^2_{\Hs} 
+
\frac{1}{\bulketa^2}\int_{\Omega} \bulkimp(\tQ^{n}_{0}) 
- \frac{1}{\bulketa^2}\int_{\Omega} \bulkexp(\tQ^{n}_{0}) 
+
\delta'_{2}\|\tQ^{n}_{0}\|^2_{\HGs}  
\le 
M_0.
\]
%for all $n \in N$ sufficiently large.  
%then using that $\bulkimp$ is convex (so is weak l.s.c.) and that $\bulkexp(y)$ is quadratic in $y$, we can take the limit.
{\color{black}
Since $\bulkimp$ is continuous on $\Vs$ due to the Sobolev embedding theorem and $\bulkexp(\tQ)$ is quadratic in $\tQ$, we can pass to the limit in $n$ and thus obtain \eqref{eq:M0-bound}.}

Next, since $\bulkfunc$ is bounded from below, we can adjust all the constants and coefficients if necessary  %$c_u$ or $c_y$ if necessary 
to obtain the bound
\vspace{-0.2cm}
\begin{equation}\label{eq:sec_apriori_est_4}
\delta'_4 \|\tQ^{n}_{t}\|^2_{L^2(\domcyl_{t})}
+ 
 \| \nabla \tQ^{n}(t)\|^2_{\Hs}
+
\delta'_{1} \|\tQ^{n}(t)\|^2_{\HGs}
\le
M_0 + M_1 + M_2.
%c'_y
%+  c'_u.
\end{equation}
This yields \eqref{eq:sec_apriori_est_4-supp}.
Now, by letting $\varepsilon > 0$ be a small positive constant, we can bound \eqref{eq:sec_apriori_est_4} from below, which yields
\vspace{-0.3cm}
\begin{equation}\label{eq:sec_apriori_est_5}
\delta'_4 \|\tQ^{n}_{t}\|^2_{L^2(\domcyl_{t})}
+ 
 \min\{1 - \varepsilon \delta'_1,\varepsilon \frac{\delta'_1}{\kappa_0}\}
 \left(
 \| \nabla \tQ^{n}(t)\|^2_{\Hs}
+
\|\tQ^{n}(t)\|^2_{\Hs}
\right)
\le
M_0 + M_1 + M_2.
%c'_y
%+  c'_u.
\end{equation}
Here, $\kappa_0$ comes from using a Poincar\'{e} type inequality.  
%is the constant from \eqref{eq:bdy_blk}.  
We can now adjust the coefficients and constants to deduce the bound:
\vspace{-0.2cm}
\begin{equation}\label{eq:sec_apriori_est_6}
\|\tQ^{n}_{t}\|^2_{L^2(\domcyl_{t})}
+ 
 \| \nabla \tQ^{n}(t)\|^2_{\Hs}
+
\|\tQ^{n}(t)\|^2_{\Hs}
\le c \left( M_0 + M_1 + M_2 \right).
\end{equation}
This yields \eqref{eq:sec_apriori_est_6-supp}.
It follows from \eqref{eq:sec_apriori_est_4}, \eqref{eq:sec_apriori_est_6}, and \eqref{eq:first_apriori_5} that $\left\{ \tQ^{n} \right\}$ is uniformly bounded in $\bigW$ \eqref{eq:space_of_weak_soln}.

\section[Proof of Thm 6]{Proof of \cref{thm:exist_state}}\label{app:thm.3.9}
The Aubin-Lions-Simon Lemma, see e.g., Theorem II.5.16, pages 102-103 in \cite{FBoyer_PFabrie_2013} provides several helpful statements. 
We provide brief justifications afterwards, as these are well-known embeddings.
\begin{enumerate}
\item There exists a subsequence $\{ \tQ^{k} \}$ with $\tQ^{k} := \tQ^{n_k}$ that converges strongly to the function $\bar{\tQ}$ in $C([0,\tfinal]; L^{6 - \varepsilon}(\Omega,\symmtraceless))$ for $\varepsilon \in (0,5]$.
\item There exists a subsequence $\{ \tQ^{l} \}$ with $\tQ^{l} := \tQ^{k_l}$ that converges weakly in
\vspace{-0.2cm}
\[
\bigW_1 := \left\{ \tQ \in L^2(0,\tfinal;\Vs) \left|\; \tQ_t \in L^2(0,\tfinal;\Hs) \right.\right\} ~ \text{ to } ~ \bar{\tQ}.
\]
\item There exists a subsequence $\{\tQ^{m}\}$ with $\tQ^{m} := \tQ^{l_m}$ that converges weakly to $\bar{\tQ}$ in $L^2(0,\tfinal;\HGs)$.
\end{enumerate}
The first subsequence exists by the Aubin-Lions Lemma, which implies that
\vspace{-0.2cm}
\[
\bigW_2 := \left\{ \tQ \in L^{\infty}(0,\tfinal;\Vs) \left|\; \tQ_t \in L^2(0,\tfinal;\Hs) \right.\right\},
\]
is compactly embedded into the space $C([0,\tfinal]; L^{6 - \varepsilon}(\Omega,\symmtraceless))$.  Here, we make use of the Sobolev embedding theorem to embed $\Vs$ into $L^{6 - \varepsilon}(\Omega,\symmtraceless)$. The second subsequence exists due to the reflexivity of $\bigW_1$; likewise for the final subsequence. 
%In addition, we can also argue that $\bar{\tQ} \in \bigW$, which we will discuss next. 
{\color{black}
Finally, we can also argue that $\bar{\tQ} \in \bigW$, by appealing to the bounds in \cref{app:prop.3.7}, which are stable under passage to the limit in $n$.  To be more specific, we can find an independent constant $\rho > 0$ such that
\vspace{-0.3cm}
\begin{equation}\label{eq:asym_1}
\aligned
\mathrm{ess\, sup}_{t \in [0,\tfinal]} (\| \nabla \bar{\tQ}(t)\|^2_{\Hs}
+
\|\bar{\tQ}(t)\|^2_{\Hs})^{1/2} &\le \rho,\\
\mathrm{ess\, sup}_{t \in [0,\tfinal]}\|\bar{\tQ}(t)\|_{\HGs} &\le \rho,\\
(\| \bar{\tQ} \|^2_{L^2(\domcyl)} + \| \bar{\tQ}_t \|^2_{L^2(\domcyl)})^{1/2} &\le \rho\\
\| \bar{\tQ} \|_{L^2(0,\tfinal;\Vs)} &\le \rho.
\endaligned
\end{equation}
We arrive at the bound in the space $\bigW$, i.e. \eqref{eq:asym_5}.
}

It remains to show that $\bar{\tQ}$ is a weak solution of \eqref{eq:forward_problem}. Uniqueness is a consequence of \Cref{thm:soln_cont}.
For arbitrarily fixed data $(\domcon,\bdycon,\tQ_{0})$ that satisfies \eqref{eq:minimal_input_reg} and a test function $\tP \in \bigW_1$ such that $\tP(0) = 0$ a.e., 
%\blue{SWW: you mean $\tP(0) = 0$??} 
we recall \eqref{eq:weak_discrete} and integrate in $t$:
\vspace{-0.2cm}
\begin{equation}\label{eq:weak_discrete_a}
\begin{split}
\int_{0}^{\tfinal} \inner{\tQ^{n}_{t}}{\tP}{\Hs}
+ &
\int_{0}^{\tfinal} \inner{\nabla \tQ^{n}}{\nabla \tP}{\Hs}
+ 
\frac{1}{\bulketa^2} \int_{0}^{\tfinal} \int_{\Omega} \bulkfunc'(\tQ^{n}) \dd \tP
\\
+ \anchcoef & \int_{0}^{\tfinal} \inner{\tQ^{n}}{\tP}{\HGs} 
= \domcoef \int_{0}^{\tfinal} \inner{\domcon}{\tP}{\Hs} + \anchcoef \int_{0}^{\tfinal} \inner{\bdycon}{\tP}{\HGs}.
\end{split}
\end{equation}
{\color{black}
The convergence of the linear terms in \cref{eq:weak_discrete_a} follows by straightforward arguments, e.g., weak convergence and use of compact embeddings. For the nonlinear term, it suffices to note that 
$\psi'$ is globally Lipschitz, which provides, e.g., strong convergence in $C([0,t_f];L^{7/4}(\Omega))$ of $
\bulkfunc'(\tQ^{k})$ to $\bulkfunc'(\bar{\tQ})$. Given $\tP \in L^2([0,\tfinal];\Vs) \subset L^2([0,\tfinal]; L^{7/3}(\Omega))$, we can then pass to the limit.
}

\section[Proof of Thm 7]{Proof of \cref{thm:higher_reg_thm}}\label{app:thm.3.11}
After using the bootstrapping and decomposition technique, we derive from \cite[Thm. 5.5]{Troltzsch_book2010} the existence of some constant $c > 0$, independent of $(\domcon,\bdycon)$, for which we have
\vspace{-0.2cm}
\begin{equation}\label{eq:higher_reg_1}
\| \bar{\tQ} \|_{C(\overline{\domcyl})} \le c\left(
\| \domcoef \domcon- \frac{1}{\bulketa^2} \psi(\bar{\tQ}) \|_{L^r(\domcyl)} + 
\| \anchcoef \bdycon \|_{L^s(\bdycyl)} + 
\| \tQ_{0} \|_{C(\overline{\Omega})}
\right).
\end{equation}
We remove the dependence on $\bar{\tQ}$ from the right-hand side, by noting that \eqref{eqn:mod_LdG_bulk_pot_bnds} implies
\vspace{-0.3cm}
\begin{equation}\label{eq:higher_reg_2}
\|\psi'(\bar{\tQ}) \|_{L^r(\domcyl)} \le 
c_1 \| \bar{\tQ} \|_{L^r(\domcyl)} \le
c_1 c_{\rm emb}\| \bar{\tQ} \|_{L^{\infty}(0,\tfinal;\Vs)}
\le 
\rho,
\end{equation}
where the second inequality follows from the continuous embedding of $L^{\infty}(0,\tfinal;\Vs)$ into $L^r(\domcyl)$ (provided $r \in (5/2,6]$), $c_{\rm emb}$ is the associated embedding constant, and $\rho$ is from \eqref{eq:asym_1}. Next, we derive an explicit bound for $\rho$. Starting from \eqref{eq:sec_apriori_est_6}, we note that before passing to the limit in $n$ we have
\vspace{-0.2cm}
\begin{multline*}
\sqrt{
 \| \nabla \tQ^{n}(t)\|^2_{\Hs}
+
\|\tQ^{n}(t)\|^2_{\Hs}}
\le
%M'.
\sqrt{M_0 + M_1 + M_2}
\le
\sqrt{M_0} + \\
M_1(\| \tQ^{n}_{0} \|_{\Hs}, \|\domcon\|_{L^2(\domcyl)}, \|\bdycon \|_{L^2(\bdycyl)}) + 
M_2(\|\bdycon(t) \|_{\HGs},\|\bdycon(0)\|_{\HGs}, \|(\bdycon)_t\|_{L^2(\bdycyl_t)},\|u\|_{L^2(\domcyl_t)}).
\end{multline*}
Here, we use the subadditivity of $\sqrt{\cdot}$ along with the fact that $M_1$ and $M_2$ are simple multilinear maps of their arguments with positive coefficients. We may then pass to the limit in $n$ along an appropriate subsequence and obtain the same inequality independent of $n$. The $M_0$-term is independent of $(\domcon,\bdycon)$.  The $M_1$-term can be bounded in the first argument by $\| \tQ_{0} \|_{C(\overline{\Omega})}$ and the $(\domcon,\bdycon)$-terms by stronger norms. For the $M_2$-term we have several possibilities. Since $(\domcon,\bdycon) \in \conreg$, with norm given by \cref{eq:def_norm_UUg},  
%\[
%\| (\domcon,\bdycon) \|_{\conreg} = 
%\max\{ 
%\| (\domcon,\bdycon) \|_{L^2(\domcyl) \times H^1(0,\tfinal;\HGs)},
%\| (\domcon,\bdycon) \|_{L^{r}(\domcyl) \times L^{s}(\bdycyl)}
%\},
%\]
and $H^1(0,\tfinal;\HGs)$ is continuously embedded into $C([0,\tfinal];H_{\Gm})$, we can bound the first two arguments in $M_2$ first by $\| \bdycon \|_{C([0,\tfinal];H_{\Gm})}$ and then further from above by $\| \bdycon \|_{H^1(0,\tfinal;\HGs)}$. The latter two arguments can be bounded from above by the norms
$\| (\bdycon)_t \|_{L^2(\bdycyl)}$ and $\|\domcon\|_{L^2(\domcyl)}$, respectively.  
Clearly, the third argument can be bounded from above by $\| \domcon \|_{H^1(0,\tfinal;\HGs)}$. Since $r > 2$, the fourth argument  can be bounded from above by $\| \domcon \|_{L^r(\domcyl)}$. By combining all of these observations, we deduce the existence of a constant $c > 0$, independent of $\bar{\tQ}, (\domcon,\bdycon)$ and $\tQ_{0}$ such that for all $t \in [0,\tfinal]$ we have
\vspace{-0.2cm}
\[
\sqrt{
 \| \nabla \bar{\tQ}(t)\|^2_{\Hs}
+
\|\bar{\tQ}(t)\|^2_{\Hs}}
\le
\sqrt{M_0} + c (\| (\domcon,\bdycon) \|_{\conreg} + \| \tQ_{0} \|_{C(\overline{\Omega})}),
\]
which implies that 
$
\| \bar{\tQ} \|_{L^{\infty}(0,\tfinal;\Vs)} \le \sqrt{M_0} + c (\| (\domcon,\bdycon) \|_{\conreg} + \| \tQ_{0} \|_{C(\overline{\Omega})}).
$
Combining this bound with \eqref{eq:higher_reg_1} and \eqref{eq:higher_reg_2}, there exists a constant $c > 0$, independent of $\bar{\tQ}, (\domcon,\bdycon)$ and $\tQ_{0}$, such that 
$\| \bar{\tQ} \|_{C(\overline{\domcyl})} \le c\left(\sqrt{M_0} 
 + 
 \| (\domcon,\bdycon) \|_{\conreg}
+
\| \tQ_{0} \|_{C(\overline{\Omega})}
\right)$.  The assertion then follows.

%\begin{lemma}
%Test Lemma.
%\end{lemma}

%
%\section*{Acknowledgments}
%We would like to acknowledge the assistance of volunteers in putting
%together this example manuscript and supplement.

\vspace{-0.2cm}

\bibliographystyle{siamplain}
\bibliography{MasterBibTeX}

\begin{thebibliography}{10}

\bibitem{Buka_book2012}
{\sc \'{A}gnes Buka and N.~\'{E}ber}, eds., {\em Flexoelectricity in Liquid
  Crystals: Theory, Experiments and Applications}, World Scientific, 2012.

\bibitem{Bajc_JCP2016}
{\sc I.~Bajc, F.~Hecht, and S.~\v{Z}umer}, {\em A mesh adaptivity scheme on the
  {Landau--de Gennes} functional minimization case in {3D}, and its driving
  efficiency}, Journal of Computational Physics, 321 (2016), pp.~981 -- 996,
  \url{https://doi.org/https://doi.org/10.1016/j.jcp.2016.02.072},
  \url{http://www.sciencedirect.com/science/article/pii/S0021999116001443}.

\bibitem{Ball_PAMM2007}
{\sc J.~M. Ball and A.~Zarnescu}, {\em Orientable and non-orientable director
  fields for liquid crystals}, Proceedings in Applied Mathematics and Mechanics
  (PAMM), 7 (2007), pp.~1050701--1050704,
  \url{https://doi.org/10.1002/pamm.200700489}.

\bibitem{Barbero_JPF1986}
{\sc G.~Barbero and G.~Durand}, {\em On the validity of the rapini-papoular
  surface anchoring energy form in nematic liquid crystals}, J. Phys. France,
  47 (1986), pp.~2129--2134,
  \url{https://doi.org/10.1051/jphys:0198600470120212900},
  \url{https://doi.org/10.1051/jphys:0198600470120212900}.

\bibitem{Bartels_bookch2014}
{\sc S.~Bartels and A.~Raisch}, {\em Simulation of {Q-tensor} fields with
  constant orientational order parameter in the theory of uniaxial nematic
  liquid crystals}, in Singular Phenomena and Scaling in Mathematical Models,
  M.~Griebel, ed., Springer International Publishing, 2014, pp.~383--412,
  \url{https://doi.org/10.1007/978-3-319-00786-1\_17}.

\bibitem{Bertsekas_IEEETAC1976}
{\sc D.~P. Bertsekas}, {\em On the goldstein-levitin-polyak gradient projection
  method}, IEEE Transactions on automatic control, 21 (1976), pp.~174--184.

\bibitem{Biggins_JMPS2012}
{\sc J.~Biggins, M.~Warner, and K.~Bhattacharya}, {\em Elasticity of polydomain
  liquid crystal elastomers}, Journal of the Mechanics and Physics of Solids,
  60 (2012), pp.~573 -- 590, \url{https://doi.org/10.1016/j.jmps.2012.01.008},
  \url{http://www.sciencedirect.com/science/article/pii/S0022509612000166}.

\bibitem{Blinov_book1983}
{\sc L.~Blinov}, {\em Electro-optical and magneto-optical properties of liquid
  crystals}, Wiley, 1983.

\bibitem{BorthNochettoWalker_NM2020}
{\sc J.-P. Borthagaray, R.~H. Nochetto, and S.~W. Walker}, {\em {A
  structure-preserving FEM for the uniaxially constrained $\mathbf{Q}$-tensor
  model of nematic liquid crystals}}, Numerische Mathematik, 145 (2020),
  pp.~837 -- 881, \url{https://doi.org/10.1007/s00211-020-01133-z},
  \url{https://doi.org/10.1007/s00211-020-01133-z}.

\bibitem{BorthagarayWalker_chap2021}
{\sc J.~P. Borthagaray and S.~W. Walker}, {\em Chapter 5 - the {Q}-tensor model
  with uniaxial constraint}, in Geometric Partial Differential Equations - Part
  II, A.~Bonito and R.~H. Nochetto, eds., vol.~22 of Handbook of Numerical
  Analysis, Elsevier, 2021, pp.~313 -- 382,
  \url{https://doi.org/https://doi.org/10.1016/bs.hna.2020.09.001},
  \url{http://www.sciencedirect.com/science/article/pii/S1570865920300132}.

\bibitem{FBoyer_PFabrie_2013}
{\sc F.~Boyer and P.~Fabrie}, {\em Mathematical Tools for the Study of the
  Incompressible Navier-Stokes Equations and Related Models}, Springer New
  York, 2013, \url{https://doi.org/10.1007/978-1-4614-5975-0}.

\bibitem{Brezis_CMP1986}
{\sc H.~Brezis, J.-M. Coron, and E.~H. Lieb}, {\em Harmonic maps with defects},
  Communications in Mathematical Physics, 107 (1986), pp.~649--705,
  \url{https://doi.org/10.1007/BF01205490}.

\bibitem{Brinkman_PT1982}
{\sc W.~F. Brinkman and P.~E. Cladis}, {\em Defects in liquid crystals},
  Physics Today, 35 (1982), pp.~48--56.

\bibitem{Brochard_JPhysC1975}
{\sc F.~Brochard, L.~L\'{e}ger, and R.~B. Meyer}, {\em Freedericksz transition
  of a homeotropic nematic liquid crystal in rotating magnetic fields}, J.
  Phys. Colloques, 36 (1975), pp.~C1--209--C1--213,
  \url{https://doi.org/10.1051/jphyscol:1975139}.

\bibitem{Camacho-Lopez_NM2004}
{\sc M.~Camacho-Lopez, H.~Finkelmann, P.~Palffy-Muhoray, and M.~Shelley}, {\em
  Fast liquid-crystal elastomer swims into the dark}, Nature Materials, 3
  (2004), pp.~307--310.

\bibitem{Coles_NP2010}
{\sc H.~Coles and S.~Morris}, {\em Liquid-crystal lasers}, Nature Photonics, 4
  (2010), pp.~676--685.

\bibitem{Colli_SJCO2015}
{\sc P.~Colli and J.~Sprekels}, {\em Optimal control of an {Allen--Cahn}
  equation with singular potentials and dynamic boundary condition}, SIAM
  Journal on Control and Optimization, 53 (2015), pp.~213--234,
  \url{https://doi.org/10.1137/120902422},
  \url{https://doi.org/10.1137/120902422},
  \url{https://arxiv.org/abs/https://doi.org/10.1137/120902422}.

\bibitem{Dasgupta_MRE2015}
{\sc P.~Dasgupta, M.~K. Das, and B.~Das}, {\em Fast switching negative
  dielectric anisotropic multicomponent mixtures for vertically aligned liquid
  crystal displays}, Materials Research Express, 2 (2015), p.~045015,
  \url{http://stacks.iop.org/2053-1591/2/i=4/a=045015}.

\bibitem{Davis_SJNA1998}
{\sc T.~Davis and E.~Gartland}, {\em Finite element analysis of the {Landau-de
  Gennes} minimization problem for liquid crystals}, SIAM Journal on Numerical
  Analysis, 35 (1998), pp.~336--362,
  \url{https://doi.org/10.1137/S0036142996297448},
  \url{https://doi.org/10.1137/S0036142996297448},
  \url{https://arxiv.org/abs/https://doi.org/10.1137/S0036142996297448}.

\bibitem{deGennes_book1995}
{\sc P.~G. de~Gennes and J.~Prost}, {\em The Physics of Liquid Crystals},
  vol.~83 of International Series of Monographs on Physics, Oxford Science
  Publication, Oxford, UK, 2nd~ed., 1995.

\bibitem{LC_Elastomers_book2012}
{\sc W.~H. {de Jeu}}, ed., {\em Liquid Crystal Elastomers: Materials and
  Applications}, Advances in Polymer Science, Springer, 2012.

\bibitem{doCarmo:book}
{\sc M.~P. {do Carmo}}, {\em Differential Geometry of Curves and Surfaces},
  Prentice Hall, Upper Saddle River, New Jersey, 1976.

\bibitem{Dunn_SJCO1981}
{\sc J.~C. Dunn}, {\em Global and asymptotic convergence rate estimates for a
  class of projected gradient processes}, SIAM Journal on Control and
  Optimization, 19 (1981), pp.~368--400, \url{https://doi.org/10.1137/0319022},
  \url{https://doi.org/10.1137/0319022},
  \url{https://arxiv.org/abs/https://doi.org/10.1137/0319022}.

\bibitem{Gartland_MMA2018}
{\sc J.~Eugene C.~Gartland}, {\em Scalings and limits of landau-de gennes
  models for liquid crystals: a comment on some recent analytical papers},
  Mathematical Modelling and Analysis, 23 (2018), pp.~414 -- 432,
  \url{https://doi.org/https://doi.org/10.3846/mma.2018.025}.

\bibitem{Evans:book}
{\sc L.~C. Evans}, {\em Partial Differential Equations}, American Mathematical
  Society, Providence, Rhode Island, 1998.

\bibitem{FarshbafShaker_NFAO2012}
{\sc M.~Farshbaf-Shaker}, {\em A penalty approach to optimal control of
  {Allen–Cahn} variational inequalities: {MPEC}-view}, Numer. Func. Anal.
  Opt.,  (2012), p.~1321–1349.

\bibitem{FarshbafShaker_AMO2015}
{\sc M.~Farshbaf-Shaker}, {\em A relaxation approach to vector-valued
  {Allen–Cahn} {MPEC} problems}, Appl. Math. Optim.,  (2015), pp.~325--351.

\bibitem{Goodby_inbook2012}
{\sc J.~W. Goodby}, {\em Handbook of Visual Display Technology (Editors: Chen,
  Janglin, Cranton, Wayne, Fihn, Mark)}, Springer, 2012, ch.~Introduction to
  Defect Textures in Liquid Crystals, pp.~1290--1314.

\bibitem{Gu_PRL2000}
{\sc Y.~Gu and N.~L. Abbott}, {\em Observation of saturn-ring defects around
  solid microspheres in nematic liquid crystals}, Phys. Rev. Lett., 85 (2000),
  pp.~4719--4722, \url{https://doi.org/10.1103/PhysRevLett.85.4719}.

\bibitem{Heinkenschloss_OMS1997}
{\sc M.~Heinkenschloss}, {\em The numerical solution of a control problem
  governed by a phase filed model}, Optimization Methods and Software, 7
  (1997), pp.~211--263, \url{https://doi.org/10.1080/10556789708805656},
  \url{https://doi.org/10.1080/10556789708805656},
  \url{https://arxiv.org/abs/https://doi.org/10.1080/10556789708805656}.

\bibitem{Heinkenschloss_CC1999}
{\sc M.~Heinkenschloss and F.~Tr\"{o}ltzsch}, {\em Analysis of the
  lagrange-sqp-newton method for the control of a phase field equation},
  Control Cybernet., 28 (1999), pp.~178--211.

\bibitem{Heo_AA2015}
{\sc J.~Heo, J.-W. Huh, and T.-H. Yoon}, {\em Fast-switching
  initially-transparent liquid crystal light shutter with crossed patterned
  electrodes}, AIP Advances, 5 (2015), 047118, pp.~--,
  \url{https://doi.org/10.1063/1.4918277}.

\bibitem{Hoffman_NFAO1992}
{\sc K.-H. Hoffman and L.~Jiang}, {\em Optimal control of a phase field model
  for solidification}, Numerical Functional Analysis and Optimization, 13
  (1992), pp.~11--27, \url{https://doi.org/10.1080/01630569208816458},
  \url{https://doi.org/10.1080/01630569208816458},
  \url{https://arxiv.org/abs/https://doi.org/10.1080/01630569208816458}.

\bibitem{Hoogboom_RSA2007}
{\sc J.~Hoogboom, J.~A. Elemans, A.~E. Rowan, T.~H. Rasing, and R.~J. Nolte},
  {\em The development of self-assembled liquid crystal display alignment
  layers}, Philosophical Transactions of the Royal Society of London A:
  Mathematical, Physical and Engineering Sciences, 365 (2007), pp.~1553--1576,
  \url{https://doi.org/10.1098/rsta.2007.2031}.

\bibitem{Humar_OE2010}
{\sc M.~Humar and I.~Mu\v{s}evi\v{c}}, {\em {3D} microlasers from
  self-assembled cholesteric liquid-crystal microdroplets}, Opt. Express, 18
  (2010), pp.~26995--27003, \url{https://doi.org/10.1364/OE.18.026995},
  \url{http://www.opticsexpress.org/abstract.cfm?URI=oe-18-26-26995}.

\bibitem{Kralj_PRSA2014}
{\sc S.~Kralj and A.~Majumdar}, {\em Order reconstruction patterns in nematic
  liquid crystal wells}, Proceedings of the Royal Society of London A:
  Mathematical, Physical and Engineering Sciences, 470 (2014),
  \url{https://doi.org/10.1098/rspa.2014.0276},
  \url{http://rspa.royalsocietypublishing.org/content/470/2169/20140276},
  \url{https://arxiv.org/abs/http://rspa.royalsocietypublishing.org/content/470/2169/20140276.full.pdf}.

\bibitem{Lagerwall_CAP2012}
{\sc J.~P. Lagerwall and G.~Scalia}, {\em A new era for liquid crystal
  research: Applications of liquid crystals in soft matter {nano-}, {bio-} and
  microtechnology}, Current Applied Physics, 12 (2012), pp.~1387 -- 1412,
  \url{https://doi.org/https://doi.org/10.1016/j.cap.2012.03.019},
  \url{http://www.sciencedirect.com/science/article/pii/S1567173912001113}.

\bibitem{Lee_APL2002}
{\sc G.-D. Lee, J.~Anderson, and P.~J. Bos}, {\em Fast {Q}-tensor method for
  modeling liquid crystal director configurations with defects}, Applied
  Physics Letters, 81 (2002), pp.~3951--3953,
  \url{https://doi.org/10.1063/1.1523157},
  \url{https://doi.org/10.1063/1.1523157},
  \url{https://arxiv.org/abs/https://doi.org/10.1063/1.1523157}.

\bibitem{LinLiu_JPDE2001}
{\sc F.-H. Lin and C.~Liu}, {\em Static and dynamic theories of liquid
  crystals}, Journal of Partial Differential Equations, 14 (2001),
  pp.~289--330.

\bibitem{Lopez-Leon_CPS2011}
{\sc T.~Lopez-Leon and A.~Fernandez-Nieves}, {\em Drops and shells of liquid
  crystal}, Colloid and Polymer Science, 289 (2011), pp.~345--359,
  \url{https://doi.org/10.1007/s00396-010-2367-7}.

\bibitem{Majumdar_EJAM2010}
{\sc A.~Majumdar}, {\em Equilibrium order parameters of nematic liquid crystals
  in the landau-de gennes theory}, European Journal of Applied Mathematics, 21
  (2010), pp.~181--203, \url{https://doi.org/10.1017/S0956792509990210}.

\bibitem{Majumdar_ARMA2010}
{\sc A.~Majumdar and A.~Zarnescu}, {\em Landau-de gennes theory of nematic
  liquid crystals: the oseen-frank limit and beyond}, Archive for rational
  mechanics and analysis, 196 (2010), pp.~227--280.

\bibitem{Mori_JJAP1999}
{\sc H.~Mori, J.~Eugene C.~Gartland, J.~R. Kelly, and P.~J. Bos}, {\em
  Multidimensional director modeling using the {Q}-tensor representation in a
  liquid crystal cell and its application to the $\pi$-cell with patterned
  electrodes}, Japanese Journal of Applied Physics, 38 (1999), p.~135,
  \url{http://stacks.iop.org/1347-4065/38/i=1R/a=135}.

\bibitem{Mottram_arXiv2014}
{\sc N.~J. {Mottram} and C.~J.~P. {Newton}}, {\em {Introduction to Q-tensor
  theory}}, ArXiv e-prints,  (2014), \url{https://arxiv.org/abs/1409.3542}.

\bibitem{Musevic_Sci2006}
{\sc I.~Mu\v{s}evi\v{c}, M.~\v{S}karabot, U.~Tkalec, M.~Ravnik, and
  S.~\v{Z}umer}, {\em Two-dimensional nematic colloidal crystals self-assembled
  by topological defects}, Science, 313 (2006), pp.~954--958,
  \url{https://doi.org/10.1126/science.1129660},
  \url{http://www.sciencemag.org/content/313/5789/954.abstract},
  \url{https://arxiv.org/abs/http://www.sciencemag.org/content/313/5789/954.full.pdf}.

\bibitem{Musevic2011}
{\sc I.~Mu\v{s}evi\v{c} and S.~\v{Z}umer}, {\em Liquid crystals: Maximizing
  memory}, Nature Materials, 10 (2011), pp.~266--268.

\bibitem{Ravnik_LC2009}
{\sc M.~Ravnik and S.~\v{Z}umer}, {\em Landau-degennes modelling of nematic
  liquid crystal colloids}, Liquid Crystals, 36 (2009), pp.~1201--1214,
  \url{https://doi.org/10.1080/02678290903056095},
  \url{https://doi.org/10.1080/02678290903056095},
  \url{https://arxiv.org/abs/https://doi.org/10.1080/02678290903056095}.

\bibitem{Resetic_NC2016}
{\sc A.~Re\v{s}eti\v{c}, J.~Milavec, B.~Zupan\v{c}i\v{c}, V.~Domenici, and
  B.~Zalar}, {\em Polymer-dispersed liquid crystal elastomers}, Nature
  Communications, 7 (2016), p.~13140,
  \url{https://doi.org/10.1038/ncomms13140}.

\bibitem{Schoeberl_TR2014}
{\sc J.~Sch\"{o}berl}, {\em C++11 implementation of finite elements in
  {NGSolve}}, Tech. Report ASC-2014-30, Institute for Analysis and Scientific
  Computing, September 2014,
  \url{http://www.asc.tuwien.ac.at/~schoeberl/wiki/publications/ngs-cpp11.pdf}.

\bibitem{Schopohl_PRL1987}
{\sc N.~Schopohl and T.~Sluckin}, {\em Defect core structure in nematic liquid
  crystals}, Physical review letters, 59 (1987), p.~2582.

\bibitem{Shah_Small2012}
{\sc A.~A. Shah, H.~Kang, K.~L. Kohlstedt, K.~H. Ahn, S.~C. Glotzer, C.~W.
  Monroe, and M.~J. Solomon}, {\em Self-assembly: Liquid crystal order in
  colloidal suspensions of spheroidal particles by direct current electric
  field assembly (small 10/2012)}, Small, 8 (2012), pp.~1551--1562,
  \url{https://doi.org/10.1002/smll.201290056}.

\bibitem{Shen_DCDS2010}
{\sc J.~Shen and X.~Yang}, {\em Numerical approximations of {A}llen-{C}ahn and
  {C}ahn-{H}illiard equations}, Discrete Contin. Dyn. Syst., 28 (2010),
  pp.~1669 -- 1691.

\bibitem{Sun_SMS2014}
{\sc J.~Sun, H.~Wang, L.~Wang, H.~Cao, H.~Xie, X.~Luo, J.~Xiao, H.~Ding,
  Z.~Yang, and H.~Yang}, {\em Preparation and thermo-optical characteristics of
  a smart polymer-stabilized liquid crystal thin film based on smectic
  {A}-chiral nematic phase transition}, Smart Materials and Structures, 23
  (2014), p.~125038, \url{http://stacks.iop.org/0964-1726/23/i=12/a=125038}.

\bibitem{Troltzsch_book2010}
{\sc F.~Tr\"{o}ltzsch}, {\em Optimal Control of Partial Differential
  Equations}, Graduate Studies in Mathematics, American Mathematical Society,
  April 2010.

\bibitem{Copar_PNAS2015}
{\sc S.~\v{C}opar, U.~Tkalec, I.~Mu\v{s}evi\v{c}, and S.~\v{Z}umer}, {\em Knot
  theory realizations in nematic colloids}, Proceedings of the National Academy
  of Sciences, 112 (2015), pp.~1675--1680,
  \url{https://doi.org/10.1073/pnas.1417178112},
  \url{http://www.pnas.org/content/112/6/1675.abstract},
  \url{https://arxiv.org/abs/http://www.pnas.org/content/112/6/1675.full.pdf}.

\bibitem{Virga_book1994}
{\sc E.~G. Virga}, {\em Variational Theories for Liquid Crystals}, vol.~8,
  Chapman and Hall, London, 1st~ed., 1994.

\bibitem{Wang_NL2014}
{\sc M.~Wang, L.~He, S.~Zorba, and Y.~Yin}, {\em Magnetically actuated liquid
  crystals}, Nano Letters, 14 (2014), pp.~3966--3971,
  \url{https://doi.org/10.1021/nl501302s},
  \url{http://dx.doi.org/10.1021/nl501302s},
  \url{https://arxiv.org/abs/http://dx.doi.org/10.1021/nl501302s}.
\newblock PMID: 24914876.

\bibitem{Ware_Science2015}
{\sc T.~H. Ware, M.~E. McConney, J.~J. Wie, V.~P. Tondiglia, and T.~J. White},
  {\em Voxelated liquid crystal elastomers}, Science, 347 (2015), pp.~982--984,
  \url{https://doi.org/10.1126/science.1261019},
  \url{http://www.sciencemag.org/content/347/6225/982.abstract},
  \url{https://arxiv.org/abs/http://www.sciencemag.org/content/347/6225/982.full.pdf}.

\bibitem{Whitmer_PRL2013}
{\sc J.~K. Whitmer, X.~Wang, F.~Mondiot, D.~S. Miller, N.~L. Abbott, and J.~J.
  de~Pablo}, {\em Nematic-field-driven positioning of particles in liquid
  crystal droplets}, Phys. Rev. Lett., 111 (2013), p.~227801,
  \url{https://doi.org/10.1103/PhysRevLett.111.227801}.

\bibitem{dePablo_SA2022}
{\sc R.~Zhang, A.~Mozaffari, and J.~J. de~Pablo}, {\em Logic operations with
  active topological defects}, Science Advances, 8 (2022), p.~eabg9060,
  \url{https://doi.org/10.1126/sciadv.abg9060},
  \url{https://www.science.org/doi/abs/10.1126/sciadv.abg9060},
  \url{https://arxiv.org/abs/https://www.science.org/doi/pdf/10.1126/sciadv.abg9060}.

\bibitem{Zhao_JSC2016}
{\sc J.~Zhao and Q.~Wang}, {\em Semi-discrete energy-stable schemes for a
  tensor-based hydrodynamic model of nematic liquid crystal flows}, Journal of
  Scientific Computing, 68 (2016), pp.~1241--1266,
  \url{https://doi.org/10.1007/s10915-016-0177-x},
  \url{https://doi.org/10.1007/s10915-016-0177-x}.

\bibitem{Zhao_JCP2016}
{\sc J.~Zhao, X.~Yang, J.~Shen, and Q.~Wang}, {\em A decoupled energy stable
  scheme for a hydrodynamic phase-field model of mixtures of nematic liquid
  crystals and viscous fluids}, Journal of Computational Physics, 305 (2016),
  pp.~539 -- 556,
  \url{https://doi.org/https://doi.org/10.1016/j.jcp.2015.09.044},
  \url{http://www.sciencedirect.com/science/article/pii/S0021999115006439}.

\bibitem{Zhu_PRE2011}
{\sc W.~Zhu, M.~Shelley, and P.~Palffy-Muhoray}, {\em Modeling and simulation
  of liquid-crystal elastomers}, Phys. Rev. E, 83 (2011), p.~051703,
  \url{https://doi.org/10.1103/PhysRevE.83.051703}.

\end{thebibliography}
%\bibliography{C:/FILES/LaTex/Master_BIBTEX/MasterBibTeX}

\end{document}